\documentclass[11pt]{amsart}

\usepackage{amsmath,amsthm,amsfonts,amssymb,amscd,euscript,enumerate}
\usepackage{tikz}
\usepackage{tikz-cd}
\usepackage[all,arc]{xy}
\usepackage{enumerate}
\usepackage{mathrsfs} 
\usepackage{thmtools}
\usepackage{datetime}
\usepackage[colorlinks, citecolor = darkspringgreen, linkcolor = melon]{hyperref}
\usepackage{graphicx}
\usepackage{color}
\usepackage{cleveref}
\usepackage{comment}
\usepackage{float}
\input{xy}
\xyoption{all}
\usetikzlibrary{shapes}
\usetikzlibrary{tikzmark}
\usepackage{mathtools}
\usepackage{listofitems}
\usepackage{wrapfig}
\usepackage{caption}
\usepackage{sidecap}
\usepackage{stmaryrd}

\definecolor{melon}{rgb}{0.4, 0.2, 1}
\definecolor{darkspringgreen}{rgb}{0.14, 0.7, 0.3}
\definecolor{amethyst}{rgb}{0.6, 0.4, 0.8}

\setlength{\textwidth}{\paperwidth}
\addtolength{\textwidth}{-2.25in}
\calclayout

\setlength{\parskip}{4pt}%

\setboolean{@twoside}{false}

\newtheorem{thm}{Theorem}[section]
\newtheorem{cor}[thm]{Corollary}
\newtheorem{prop}[thm]{Proposition}
\newtheorem{lem}[thm]{Lemma}

\theoremstyle{definition}
\newtheorem{defn}[thm]{Definition}

\newtheorem{exmp}[thm]{Example}

\theoremstyle{remark}
\newtheorem{rem}[thm]{Remark}

\makeatletter
\let\c@equation\c@thm
\makeatother
\numberwithin{equation}{section}

\makeatletter
\newcommand*\bigcdot{\mathpalette\bigcdot@{.5}}
\newcommand*\bigcdot@[2]{\mathbin{\vcenter{\hbox{\scalebox{#2}{$\m@th#1\bullet$}}}}}
\makeatother

\makeatletter
\def\subsection{\@startsection{subsection}{3}%
  \z@{.5\linespacing\@plus.7\linespacing}{.1\linespacing}%
  {\bfseries}}
\makeatother

\newcommand{\Z}{\mathbb{Z}}
\newcommand{\N}{\mathbb{N}}
\newcommand{\R}{\mathbb{R}}

\newcommand{\calA}{\mathcal{A}}

\newcommand{\calP}{\mathcal{P}}

\newcommand{\frakd}{\mathfrak{d}}
\newcommand{\frakm}{\mathfrak{m}}

\newcommand{\frakD}{\mathfrak{D}}
\renewcommand{\phi}{\varphi}
\renewcommand{\epsilon}{\varepsilon}

\DeclareMathOperator{\sh}{sh}
\DeclareMathOperator{\Supp}{Supp}

\DeclareMathOperator{\cut} { \setminus}

\DeclareMathOperator{\CG}{CG}

\DeclareMathOperator{\BL}{BL}

\usepackage{etoolbox}
\makeatletter
\patchcmd{\@maketitle}
  {\ifx\@empty\@dedicatory}
  {\ifx\@empty\@date \else {\vskip3ex \centering\footnotesize\@date\par\vskip1ex}\fi
   \ifx\@empty\@dedicatory}
  {}{}
\patchcmd{\@adminfootnotes}
  {\ifx\@empty\@date\else \@footnotetext{\@setdate}\fi}
  {}{}{}
\makeatother

\title{Scattering diagrams, tight gradings, and generalized positivity}
\author[Burcroff]{Amanda Burcroff}
\address{Department of Mathematics, Harvard University}
\email{\href{mailto:aburcroff@math.harvard.edu}{aburcroff@math.harvard.edu}}

\author[Lee]{Kyungyong Lee}
\address{Department of Mathematics, University of Alabama; and School of Mathematics, Korea Institute for Advanced Study}
\email{\href{mailto:kyungyong.lee@ua.edu}{kyungyong.lee@ua.edu}; \href{mailto:klee1@kias.re.kr}{klee1@kias.re.kr}}

\author[Mou]{Lang Mou}
\address{Department of Mathematics, University of Cologne}
\email{\href{mailto:langmou@math.uni-koeln.de}{langmou@math.uni-koeln.de}}

\begin{document}
\keywords{Scattering digrams $|$ Tight gradings $|$ Cluster algebras $|$ Positivity}

\begin{abstract}
In 2013, Lee, Li, and Zelevinsky introduced combinatorial objects called compatible pairs to construct the greedy bases for rank-2 cluster algebras, consisting of indecomposable positive elements including the cluster monomials. Subsequently, Rupel extended this construction to the setting of generalized rank-2 cluster algebras by defining compatible gradings. 
We discover a new class of combinatorial objects which we call tight gradings. Using this, 
we give a directly computable, manifestly positive, and elementary but highly nontrivial formula describing rank-$2$ consistent scattering diagrams. This allows us to show that the coefficients of the wall-functions on a generalized cluster scattering diagram of any rank are positive, which implies the Laurent positivity for generalized cluster algebras and the strong positivity of their theta bases.
\end{abstract}

\maketitle
\setcounter{tocdepth}{1}
\tableofcontents

\section{Introduction}

\emph{Scattering diagrams} (or \emph{wall-crossing structures}) emerged from the work of Kontsevich--Soibelman \cite{KS} and Gross--Siebert \cite{GS} in their efforts to construct mirror manifolds, with both programs growing out of the Strominger--Yau--Zaslow conjecture \cite{SYZ} in mirror symmetry. Since then, this structure has also been utilized to encode enumerative geometric invariants \cite{GPS, Bou, GScan} and categorical invariants that count stable objects \cite{KSwcs, Bri17}. These two themes have notably overlapped in the \emph{cluster algebras} discovered by Fomin and Zelevinsky \cite{FZ} and subsequent studies, where the techniques of scattering diagrams are fundamental in solving problems in algebraic combinatorics \cite{GHKK, davison2018positive, davison2021strong, KY}.

Cluster algebras, originally devised as a combinatorial framework to address total positivity \cite{LusTotal, FZtotal} and (dual) canonical bases \cite{LusCan} in Lie theory, have themselves given rise to a wide range of intriguing algebraic and combinatorial questions. Among these, one of the most notable is the \emph{positivity phenomenon}, conjectured by Fomin and Zelevinsky \cite[Section 3]{FZ}. After remaining unsolved for over a decade, this positivity was finally proven by Lee and Schiffler \cite{LSpositive} for all skew-symmetric cluster algebras using an explicit rank-2 formula that sums over compatible pairs on Dyck paths \cite{LS}. This breakthrough led to the construction of the \emph{greedy basis} by Lee, Li and Zelevinsky \cite{LLZ}.

In their seminal work \cite{GHKK}, Gross, Hacking, Keel, and Kontsevich introduced ideas and tools from log Calabi--Yau mirror symmetry \cite{GHK}, including scattering diagrams, broken lines, and theta functions, into the study of cluster algebras. To a large extent, they constructed the canonical (or theta) basis, whose elements, known as theta functions, are parametrized by the integral tropical points in the Fock--Goncharov dual $\mathcal X$-cluster variety \cite{FG06,FG09}. Due to the positivity of the scattering diagram developed in \cite{GHKK}, the theta functions, which contain all cluster monomials, satisfy Laurent positivity. For the same reason, their multiplicative structure constants are also positive, a property referred to as \emph{strong positivity}. In this article, we combine and extend the methods of Lee--Schiffler \cite{LS, LSpositive} and Gross--Hacking--Keel--Kontsevich \cite{GHKK} to derive various new positivity results for generalized cluster algebras \cite{CS}.

In this paper, a scattering diagram in a real vector space is defined as a collection of codimension-one cones, referred to as \emph{walls}, each associated with a formal power series, called a \emph{wall-function}. We will first devote to understanding the rank-2 case in the context of computing ordered factorizations of commutators in the tropical vertex group \cite{KS, GPS}, which is equivalent to determining the scattering rays in rank-2 generalized cluster scattering diagrams \cite{Mou, CKM}. The wall-function $f_{(a, b)}(P_1, P_2)$ on the ray $\mathbb R_{\leq 0}(a, b)$ for any positive coprime integers $(a, b)$ is notoriously difficult to compute \cite{Rei11, RW, Rea, akagi2023explicit}, even when the initial wall-functions $P_1$ and $P_2$ are binomials of relatively low degrees. Although there are Coxeter-type symmetries and cluster-type discrete structures governing the appearance of some rays \cite{GP}, little is known about the wall-functions in a 2-dimensional sector known as the ``Badlands’’, when $\deg P_1 \cdot \deg P_2 > 4$.

In \Cref{sec: first main thm}, we present a directly computable, manifestly positive, elementary, yet highly nontrivial formula describing all wall-functions $f_{(a,b)}({P}_1,{P}_2)$. We show that each coefficient of the wall-functions enumerates a new class of combinatorial objects that we call \emph{tight gradings} on a maximal Dyck path. The \emph{maximal Dyck path} $\calP(m,n)$ is the lattice path from $(0,0)$ to $(m,n)$ that is closest to the main diagonal without crossing strictly above it.  A \emph{grading} on $\calP(m,n)$ is an assignment of a nonnegative integer value to each edge of $\calP(m,n)$.  A grading is \emph{tight} if it satisfies a certain combinatorial compatibility
condition (see Section \ref{sec: tight grading} for precise details).  Each tight grading has a weight depending on the coefficients of $P_1$ and $P_2$.  In the 
\begin{wrapfigure}{r}{0.28\textwidth}
\vspace{-7pt}
\includegraphics[width=0.28\textwidth]{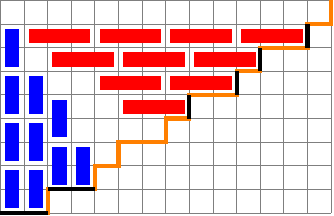}
\vspace{-20pt}
\caption{\emph{A tight grading on a maximal Dyck path.}}
\vspace{-20pt}
\label{fig: intro tight grading}
\end{wrapfigure} 
classical cluster algebra setting, this weight is either $0$ or $1$.

Pictorially, tight gradings can be represented by certain ``tilings'' by rectangles on rotations of the maximal Dyck path, as in the image to the right. The size of the first rectangle extending from each edge corresponds to its value in the grading, and edges with no rectangle extending from them have value $0$.  The relatively small space between the red and blue rectangles encodes the tightness condition, and the fact that the rectangles are disjoint encodes the compatibility condition.

\begin{thm}[Theorem \ref{first_main_thm}, \Cref{cor: bounded tight grading formula}]
In a generalized cluster scattering diagram of rank $2$, each coefficient of the wall-function $f_{(a,b)}({P}_1,{P}_2)$ is equal to the sum of weights of the corresponding tights gradings on some maximal Dyck path. 
\end{thm}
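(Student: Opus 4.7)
The plan is to reduce the wall-function computation to a bookkeeping problem on the maximal Dyck path $\calP(a,b)$ by iterating the rank-$2$ consistency relation. In the tropical vertex group framework of Kontsevich--Soibelman and Gross--Pandharipande--Siebert, $f_{(a,b)}({P}_1,{P}_2)$ is characterized by the condition that a specific ordered product of wall-function automorphisms (containing the automorphisms attached to the two initial walls $P_1$ and $P_2$) equals the identity. I would expand this relation order-by-order in the formal completion so that each coefficient of $f_{(a,b)}$ is expressed as a finite sum of products of coefficients of $P_1$ and $P_2$, indexed by some discrete data describing how the iteration chooses monomials at each step.

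To package this expansion combinatorially, I would associate to each contribution a decoration of $\calP(a,b)$: whenever the iteration ``selects'' the $k$-th monomial of $P_1$ (respectively $P_2$), decorate a horizontal (resp.\ vertical) edge of $\calP(a,b)$ with the integer $k$. The pictorial description in the introduction suggests the correct encoding: each decorated edge spawns a rectangle whose dimensions are dictated by its value, and the overall weight is the product of the corresponding coefficients of $P_1$ and $P_2$. The problem then reduces to characterizing which decorations arise from the iterated commutator computation and summing their weights monomial-by-monomial. The classical case $P_1 = 1+x_1$, $P_2 = 1+x_2$ provides a template: here tight gradings should specialize to the Lee--Schiffler compatible pairs, and the coefficients should recover the greedy basis elements of Lee--Li--Zelevinsky, which are in turn known to match the wall-functions of the rank-$2$ $\calA$-cluster scattering diagram.

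The main obstacle will be pinning down precisely the tightness condition. A priori the iteration produces many decorations, but most are either redundant (several iteration paths yielding the same monomial term, with some cancelling) or should be attributed to finer rays $(a',b')$ refining $(a,b)$. I expect that the disjointness of rectangles encodes a compatibility condition paralleling the classical compatible pair condition, while the ``close but not touching'' spacing captures exactly the \emph{primitive} contributions to the single ray $(a,b)$. Proving this equivalence is the crux, and I would attempt it either via a sign-reversing involution cancelling non-primitive decorations, or via an inductive argument exploiting the cluster-type discrete structures known to govern the scattering diagram outside the badlands and then bootstrapping into the badlands using positivity of the consistency relation. A useful sanity check throughout would be to compare with Rupel's compatible gradings, which should emerge as a coarser combinatorial object aggregating tight gradings in a controlled way, allowing one to isolate the tightness correction term and verify it against known examples in low degree.
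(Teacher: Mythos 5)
Your proposal correctly identifies the target neighborhood (compatible gradings, greedy bases, the crux being the tightness characterization) but takes a different route from the paper, and the route has a genuine gap at exactly the place you flag as ``the crux.''

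The paper does not expand the consistency relation order-by-order and then organize the resulting monomials. Instead it proves the formula by matching the theta basis with the greedy basis. The key technical input is a \emph{unique broken line lemma}: for $(m,n)=M_{-1}(ka,kb)$ with $(m,n)>(ka,kb)$, there is exactly one broken line of initial exponent $(-m,-n)$, final exponent $(-m+ka,-n+kb)$, and no bend on the $x$-axis; this broken line bends with multiplicity $k$ only at the wall of slope $b/a$, and hence its weight is literally the coefficient $\lambda(ka,kb)$ of the wall-function. The count of such broken lines is then shown, by a downward induction on bendings at the $x$-axis and the $\vartheta$-equals-greedy theorem of Cheung--Gross--Muller--Musiker--Rupel--Stella--Williams, to equal the number $|\CG^0_+(m,n,ka,kb)|$ of compatible gradings whose total grading outside the shadow is zero --- and these are precisely the tight gradings. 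This ``read off the wall-function coefficient as the weight of one privileged broken line'' device is the idea you are missing; without it, your order-by-order expansion has to confront exactly the combinatorial explosion and cancellation inside the Badlands that you correctly identify as the obstacle, and your two fallback strategies (sign-reversing involution, bootstrapping from the cluster region) are left as vague hopes rather than a plan.

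Two more concrete inaccuracies. First, the Dyck path is not $\calP(a,b)$ but $\calP(M_\epsilon(ka,kb))$, a slightly perturbed path with $\beta_1 n - \beta_2 m = \epsilon\gcd(\beta_1,\beta_2)$; this perturbation is essential to guarantee that the privileged broken line exists and terminates in the first quadrant, and is what forces the ``tight'' spacing between rectangles. Second, the relationship between compatible gradings and tight gradings is the opposite of what you suggest: tight gradings are not aggregated into coarser compatible gradings --- rather, a tight grading \emph{is} a compatible grading satisfying an additional shadow-containment condition. Compatible gradings enumerate the full greedy/theta element, while tight gradings isolate the contribution of a single wall crossing, so the refinement runs the other way.
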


In \cite{GPS}, the coefficients in $\log f_{(a, b)}$ are proven to be interpreted by relative Gromov--Witten invariants on toric surfaces. Therefore the above theorem yields a combinatorial formula for computing these Gromov-Witten invariants in terms of tight gradings (see \Cref{cor: tight gradings GW invariants}).

Built on the rank-2 positivity demonstrated by our tight grading formula, we turn our attention to developing the positivity of higher-rank scattering diagrams towards applications in \emph{generalized cluster algebras}. These algebras, axiomatized by Chekhov and Shapiro \cite{CS} (see also \cite{Nak}), accommodate polynomial mutation rules, in contrast to binomial exchange relations introduced by Fomin and Zelevinsky \cite{FZ}.
Following \cite{GHKK}, the \emph{generalized cluster scattering diagrams} \cite{Mou, CKM} are constructed to study these algebras. Extending our rank-2 positivity and combining with work of Mou \cite[Section 8.5]{Mou} (see also \cite{CKM}), we obtain the following positivity results in all ranks. A coefficient is said to be \emph{positive} if it is a polynomial in the coefficients of the initial exchange polynomials with positive integer coefficients.

\begin{thm}[\Cref{thm: positivity higher rank}]\label{thm: intro scattering diagram positivity}
There exists a representative for (the equivalence class of) a generalized cluster scattering diagram of any rank such that the coefficients of all wall-functions are positive.
\end{thm}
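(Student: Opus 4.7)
The plan is to bootstrap from the rank-$2$ positivity established via tight gradings to all ranks using the standard order-by-order (Kontsevich--Soibelman) construction of the scattering diagram, in which consistency around each codimension-two joint is reduced to a rank-$2$ problem. Concretely, I would build a representative $\mathfrak{D}$ of the generalized cluster scattering diagram inductively on the order filtration by the maximal ideal of the relevant completed group algebra. At each step one starts with the incoming walls $\mathfrak{D}_{k}$ consistent modulo order $k$, and at every joint $\mathfrak{j}$ of codimension two one must adjoin new walls, supported on $\mathfrak{j}+\mathbb{R}_{\le 0}(a,b)$ in directions $(a,b)$ transverse to $\mathfrak{j}$, whose wall-functions correct the path-ordered product around $\mathfrak{j}$ so that it becomes the identity modulo order $k+1$.

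The key point is that the local correction at each joint is governed by a rank-$2$ scattering problem: restricting $\mathfrak{D}_k$ to a transverse two-plane through $\mathfrak{j}$ gives a rank-$2$ scattering diagram whose two initial walls carry the restrictions of the wall-functions of $\mathfrak{D}_k$ meeting at $\mathfrak{j}$. By the theorem recalled above (Theorem~\ref{first_main_thm}, Corollary~\ref{cor: bounded tight grading formula}), applied with $P_1, P_2$ equal to these restricted wall-functions, every coefficient of every rank-$2$ scattering wall-function $f_{(a,b)}(P_1,P_2)$ is a sum of weights of tight gradings, hence a polynomial with positive integer coefficients in the coefficients of $P_1$ and $P_2$. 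By induction the coefficients of $P_1, P_2$ are themselves positive polynomials in the coefficients of the initial exchange polynomials of the generalized cluster algebra; composing positive polynomials with positive polynomials yields positive polynomials, so all new wall-functions added at order $k+1$ also have positive coefficients. One then takes the inverse limit to obtain $\mathfrak{D}$.

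Two technical points remain to be addressed. First, the rank-$2$ formula applies to the ``all-functions-on-one-ray'' normalization, whereas in higher rank one typically has several walls with the same support; one must choose the representative in which, at each joint, the correction is concentrated on outgoing rays in the form produced by the rank-$2$ theorem, which is precisely the content of the consistency reduction developed in \cite[Section~8.5]{Mou} (and \cite{CKM}) for generalized cluster scattering diagrams. Second, one must verify that the inductive hypothesis actually holds at the base case: the initial walls of a generalized cluster scattering diagram are the exchange polynomials themselves, whose coefficients are the initial coefficients and hence trivially positive. With these two inputs in place, the induction closes.

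I expect the main obstacle to be bookkeeping rather than conceptual. The rank-$2$ theorem already does the heavy lifting: it simultaneously produces positivity and an explicit combinatorial formula for every scattering ray, for \emph{arbitrary} initial wall-functions $P_1, P_2$ (not merely binomials as in the classical case). The delicate part is ensuring that ``the'' representative at each inductive step is well-defined and that positivity is preserved under the equivalence moves used to consolidate coincident walls; this is where the interplay between the tight-grading formula and the joint-by-joint construction in \cite{Mou, CKM} has to be made precise. Once that is arranged, Theorem~\ref{thm: positivity higher rank} follows by passing to the limit.
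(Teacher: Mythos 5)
Your overall strategy matches the paper's: build the diagram order by order \`a la Kontsevich--Soibelman, reduce each local consistency problem at a codimension-two joint to a rank-$2$ scattering problem, and feed positivity through the induction. However, there is a concrete gap in the way you invoke the rank-$2$ input. You apply \Cref{first_main_thm} and \Cref{cor: bounded tight grading formula} ``with $P_1,P_2$ equal to these restricted wall-functions,'' but those results govern the factorization of $T_{(1,0),P_1}T_{(0,1),P_2}$, i.e.\ exactly two initial lines. At a perpendicular joint $\mathfrak j$ in higher rank the transverse rank-$2$ picture generally contains \emph{more than two} lines: several initial hyperplanes $e_i^\perp$ can contain $\mathfrak j$, and previously created walls can pass through it as well. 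What the paper actually uses at this step is \Cref{thm: multiple initial lines}, which handles an arbitrary collection of initial lines and is derived from the two-line tight-grading formula via a change-of-lattice trick and a perturbation trick borrowed from \cite{GPS,GHKK}. That extension is not a matter of notation; without it the inductive step does not go through as written.

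A secondary under-specification is in the consistency verification. You defer it to ``bookkeeping,'' but the paper needs a genuine argument: after inserting the new order-$(k+1)$ walls at every perpendicular joint, it compares $\mathfrak D_{k+1}$ against the truncation $\mathfrak D^{\le k+1}$ of the true consistent diagram and shows that the discrepancy is a scattering diagram supported only on walls whose facets contain parallel joints, which forces any nontrivial wall of the discrepancy to be incoming and yields a contradiction. This comparison is what guarantees both consistency modulo $\mathfrak m^{k+2}$ and the correct equivalence class of the limit, and it also depends on maintaining the structural hypotheses (that non-initial walls are outgoing with a single perpendicular-joint facet, and that each perpendicular joint bounds at most one outgoing wall per direction) through the induction. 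Your proposal gestures at choosing ``the'' representative via \cite{Mou,CKM} but does not isolate these hypotheses or show they propagate, so the induction is not yet closed.
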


\begin{cor}[\Cref{thm: positivity cluster variables}, {\cite[Conjecture 5.1]{CS}}]
In a generalized cluster algebra of any rank, the Laurent expansion of any generalized cluster variable (in an initial cluster) has positive coefficients.
\end{cor}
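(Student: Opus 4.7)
The plan is to derive the corollary from \Cref{thm: intro scattering diagram positivity} together with the broken line description of theta functions for generalized cluster scattering diagrams developed by Mou \cite{Mou} (see also \cite{CKM}), which parallels the ordinary cluster algebra framework of Gross--Hacking--Keel--Kontsevich \cite{GHKK}. The first step is to identify each generalized cluster variable $x$ with a theta function in a consistent generalized cluster scattering diagram $\frakD$: by Mou's work, there is an integral tropical point $g$ (the $g$-vector of $x$) such that $x$ coincides with the theta function $\vartheta_g$, and the Laurent expansion of $x$ in the initial cluster is recovered as the broken line expansion of $\vartheta_g$ based at a generic point $Q$ of the initial chamber.

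Next, I would recall the broken line formula
\[
\vartheta_g(Q) \;=\; \sum_\gamma c(\gamma)\, z^{F(\gamma)},
\]
where the sum ranges over broken lines $\gamma$ in $\frakD$ with asymptotic direction $g$ and endpoint $Q$, each $z^{F(\gamma)}$ is a Laurent monomial in the initial cluster variables, and each $c(\gamma)$ is the product of monomial coefficients extracted from the wall-functions of the walls crossed by $\gamma$. Since theta functions are an invariant of the equivalence class of $\frakD$, the broken line sum on the right may be computed in any convenient representative of $\frakD$.

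The key input is then \Cref{thm: intro scattering diagram positivity}: I would fix a representative of $\frakD$ in which every wall-function has coefficients that are polynomials in the initial exchange polynomial coefficients with positive integer coefficients. For any broken line $\gamma$ in this representative, each wall-crossing contributes one such positive polynomial, so $c(\gamma)$ is a product of positive polynomials and therefore itself positive in the sense of the paper. Summing over broken lines, each coefficient of the Laurent expansion of $x$ is a sum of positive polynomials and hence positive, which is exactly the desired conclusion.

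The main obstacle is making the first paragraph entirely rigorous in the generalized setting: one must verify that the broken line sum for the $g$-vector of a generalized cluster variable has only finitely many nonzero contributions, that this sum correctly reproduces the Laurent expansion of $x$ in the initial cluster (via the Fock--Goncharov $p^*$ map relating $\calA$- and $\calX$-type tropical data), and that the scalars extracted from wall-functions along a broken line are genuinely polynomials in the initial coefficients rather than merely formal series. All of these ingredients are supplied by \cite[Section 8.5]{Mou}, which I would invoke as a black box; with them in hand, the deduction from \Cref{thm: intro scattering diagram positivity} is a short and direct matter of tracking positivity through the broken line formula.
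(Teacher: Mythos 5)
Your proposal is correct and follows essentially the same route as the paper: the paper derives \Cref{thm: positivity cluster variables} by combining \Cref{prop: cluster variable theta function} (every cluster variable is a theta function, a finite sum over broken lines, citing \cite{GHKK,Mou,CKM}) with \Cref{thm: positivity higher rank} (a positive representative of the scattering diagram exists), exactly as you do. You also correctly identify the technical points (finiteness, matching Laurent expansions, polynomiality of wall-function contributions) that the paper delegates to \cite[Section 8.5]{Mou}.
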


\begin{cor}[\Cref{thm: strong positivity}]
The theta functions defined in a generalized cluster scattering diagram of any rank have strong positivity, that is, their multiplicative structure constants are positive.
\end{cor}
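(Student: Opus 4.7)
The plan is to deduce strong positivity from Theorem \ref{thm: intro scattering diagram positivity} together with the broken-line description of theta function multiplication from \cite{GHKK}, adapted to the generalized setting in \cite{Mou, CKM}. First I would fix a representative $\frakD$ of the generalized cluster scattering diagram in which every wall-function has positive coefficients, as provided by Theorem \ref{thm: intro scattering diagram positivity}. Broken lines and theta functions are defined in $\frakD$ exactly as in \cite{GHKK}: for an integral tropical point $p$ and a chosen generic base point $Q$, the theta function $\vartheta_p$ is the sum over all broken lines $\gamma$ asymptotic to slope $p$ and ending at $Q$ of a monomial $c(\gamma)\, z^{F(\gamma)}$, where the scalar $c(\gamma)$ is the product of the wall-function monomial coefficients collected at each bend of $\gamma$ and $F(\gamma)$ is the final exponent.

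Next I would apply the standard broken-line computation of the multiplicative structure constants $\alpha(p, q, r)$ in $\vartheta_p \cdot \vartheta_q = \sum_r \alpha(p, q, r)\, \vartheta_r$, which expresses each such constant as a finite sum
$$\alpha(p, q, r) = \sum_{(\gamma_1, \gamma_2)} c(\gamma_1)\, c(\gamma_2),$$
indexed by pairs of broken lines $(\gamma_1, \gamma_2)$ with initial slopes $p$, $q$ ending at a common point slightly perturbed from $Q$ and satisfying $F(\gamma_1) + F(\gamma_2) = r$. This identity uses only the definitions of broken lines and the consistency of $\frakD$, both of which are established in \cite{Mou, CKM} for generalized cluster scattering diagrams. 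Since each $c(\gamma_i)$ is a product of wall-function coefficients of $\frakD$, and hence a polynomial in the initial exchange-polynomial coefficients with positive integer coefficients by Theorem \ref{thm: intro scattering diagram positivity}, each summand $c(\gamma_1)\, c(\gamma_2)$ is positive, and therefore so is $\alpha(p, q, r)$.

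The main obstacle is verifying that the broken-line formalism and the multiplication formula above transfer intact to generalized cluster scattering diagrams, and that the sum defining $\alpha(p, q, r)$ is finite for each $r$ so that positivity makes sense at the level of coefficient polynomials. Both points are handled by the same standard arguments as in \cite[\S 6]{GHKK}, using the natural grading on $\frakD$ and the observation that broken lines with bounded final exponent pick up only finitely many monomial factors; the adaptation to the generalized setting is carried out in \cite{Mou, CKM}. Granting these structural results, the corollary follows directly from the positivity statement of Theorem \ref{thm: intro scattering diagram positivity}.
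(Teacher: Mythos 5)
Your proposal matches the paper's approach exactly. The paper states this result as \Cref{thm: strong positivity} and proves it precisely as you do: fix a positive representative of the scattering diagram from \Cref{thm: positivity higher rank}, invoke the broken-line formula for structure constants from \cite[Def.-Lem.~6.2]{GHKK} (whose adaptation to the generalized setting is supplied by \cite{Mou, CKM}), and observe that each summand $c(\gamma_1)c(\gamma_2)$ is a product of wall-function coefficients and hence a positive polynomial in the $p_{i,j}$. The only nuance you might make more explicit is that the paper restricts to the subset $\Theta\subseteq M$ of tropical points for which $\vartheta_m$ is a \emph{finite} sum of broken-line contributions, which is exactly the finiteness issue you flag at the end; with that restriction in place, your argument is the paper's.
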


In Section \ref{sec: tight grading}, we define tight gradings.  We state our first main theorem, which gives an explicit formula for wall-function coefficients in terms of tight gradings, in Section \ref{sec: first main thm}. Section \ref{sec: sd} contains preliminaries on scattering diagrams, focusing on the generalized cluster case.  Our second main theorem on the positivity of generalized cluster scattering diagrams is presented in Section \ref{sec: second main theorem} with a proof outlined.  We then describe broken lines and theta functions and deduce the positivity results for generalized cluster algebras in Section \ref{sec: broken lines and theta functions}.  In Section \ref{sec: greedy basis}, we construct the greedy basis for generalized rank-$2$ cluster algebras. We sketch the proof of the first main theorem in Section \ref{sec: proof first main theorem}.  Detailed proofs of the results in this announcement will soon be presented in a forthcoming work.

\subsection*{Acknowledgements}
AB was supported by the NSF GRFP and the Jack Kent Cooke Foundation. KL was supported by the University of Alabama, Korea Institute for Advanced Study, and the NSF grants DMS 2042786 and DMS 2302620. LM was supported by the Deutsche Forschungsgemeinschaft (DFG, German Research Foundation) -- Project-ID 281071066 -- TRR 191. The authors thank Lauren Williams for helpful suggestions. LM thanks Fang Li, Siyang Liu, Jie Pan and Michael Shapiro for helpful discussions.

\section{Tight gradings}\label{sec: tight grading}

In this section, we introduce combinatorial objects called \emph{tight gradings} that are central to our main results.

\subsection{Maximal Dyck paths}
Fix $m,n \in \Z_{\geq 0}$. Consider a rectangle with vertices $(0,0)$, $(0,n)$, $(m,0)$, and $(m,n)$ with a main diagonal from $(0,0)$ to $(m,n)$.  

\begin{defn}
  A \emph{Dyck path} is a lattice path in $(\mathbb{Z}\times \mathbb{R})\cup(\mathbb{R}\times \mathbb{Z})\subset \mathbb{R}^2$ starting at $(0,0)$ and ending at $(m,n)$, proceeding by only unit north and east steps and never passing strictly above the main diagonal. Given a collection $C$ of subpaths of a Dyck path, we denote the set of east steps by $C_1$ (resp. the set of north steps by $C_2$), and the number of east steps by $|C_1|$ (resp. the number of north steps by $|C_2|$). 
  Given an edge $e$ in a Dyck path $\calP$, let $p_e$ denote the left endpoint of $e$ if $e$ is horizontal or the top endpoint of $e$ if $e$ is vertical.  For edges $e,f$ in $\calP$, let $\overrightarrow{ef}$ denote the subpath proceeding east from $p_e$ to $p_f$, continuing cyclically around $\calP$ if $p_e$ is to the east of $p_f$. 
\end{defn}

The Dyck paths from $(0,0)$ to $(m,n)$ form a partially ordered set by comparing the heights at all vertices.  The \emph{maximal Dyck path} $\calP(m,n)$ is the maximal element under this partial order. An equivalent definition may be given as follows.

\begin{defn}\label{defn: maximal Dyck path}
For nonnegative integers $m$ and $n$, the \emph{maximal Dyck path} $\calP(m,n)$ is the path proceeding by unit north and east steps from $(0,0)$ to $(m,n)$ that is closest to the main diagonal without crossing strictly above it. We label the horizontal edges from left to right by $u_1,u_2,\dots,u_m$ and the vertical edges from bottom to top by $v_1,v_2,\dots,v_n$.
\end{defn} 

\begin{exmp} In \Cref{fig: compatible grading example}, the maximal Dyck path $\calP(6,4)$ is shown in the top left and $\calP(7,4)$ is shown in the top right.
\end{exmp}

In the setting of combinatorics on words, maximal Dyck paths are also known as Christoffel words. When $m$ and $n$ are relatively prime,  the maximal Dyck path $\calP(m,n)$ corresponds to the lower Christoffel word of slope $n/m$; see \cite{BLRS} for further details.

\subsection{Compatible gradings}
Motivated by Lee--Schiffler \cite{LS}, 
Lee, Li, and Zelevinsky \cite{LLZ} introduced combinatorial objects called \emph{compatible pairs} to construct the \emph{greedy basis} for rank-$2$ cluster algebras, consisting of indecomposable positive elements including the cluster monomials. Rupel \cite{Rupgengreed, Rup2} extended this construction to the setting of \emph{generalized} rank-$2$ cluster algebras by defining \emph{compatible gradings}.

A function  from the set of edges on $\mathcal{P}(m,n)$ to $\mathbb{Z}_{\ge0}$ is called a \emph{grading}. 

\begin{defn}
Let $E_1$ (resp. $E_2$) be the set of horizontal (resp. vertical) edges on $\calP(m,n)$, and let $E=E(m,n)=E_1\cup E_2$.  
    A grading $\omega:E\longrightarrow\mathbb{Z}_{\ge0}$  is called \emph{compatible} if for every $u\in E_1$ and $v\in E_2$, there exists an edge $e$ along the subpath $\overrightarrow{uv}$ so that at least one of the following holds:
    \begin{equation}\label{0407df:comp}
      \aligned
&    e\neq v \quad \text{and} \quad |(\overrightarrow{ue})_2|=\sum_{\tilde{u}\in (\overrightarrow{ue})_1} \omega(\tilde{u});\\
 &   e\neq u \quad \text{and} \quad |(\overrightarrow{ev})_1|=\sum_{\tilde{v}\in (\overrightarrow{ev})_2} \omega(\tilde{v}).
    \endaligned
    \end{equation}

\end{defn} 

\begin{exmp}\label{main_exmp}
For each $i\in\{1,2,3\}$, let $\omega_i: E(i+5,4)\longrightarrow \mathbb{Z}_{\ge0}$  be the grading given by $\omega_i(u_1)=\omega_i(u_2)=2$, $\omega_i(v_3)=\omega_i(v_4)=3$, and $\omega_i(e)=0$ for every $e\in E(i+5,4)\setminus\{u_1,u_2,v_3,v_4\}$.
Then $\omega_1$ is not compatible, but $\omega_2$ and $\omega_3$ are compatible. The main difference between $\omega_1$ and $\omega_2$ is that the edge $e=u_2$ in $E(7,4)$ satisfies the second condition in (\ref{0407df:comp}), with both sides of the equation equal to $6$.
\end{exmp}  

\subsection{Shadows}
In their study of compatible pairs, Lee, Li, and Zelevinsky \cite{LLZ} introduced the notion of the ``shadow'' of a set of horizontal (or vertical) edges, which Rupel \cite{Rup2} extended to the setting of gradings. 

\begin{defn}\label{def: tight grading}
For any grading $\omega$ and for any subset $S$ of $E$,  let $\omega(S)=\sum_{e\in E} \omega(e)$. 
For a vertical edge $v \in S_2$, we define its \emph{local shadow}, denoted $\sh(v; S_2)$, to be the set of horizontal edges in the shortest subpath $\overrightarrow{uv}$ of $\calP=\calP(m,n)$ such that $|(\overrightarrow{uv})_1| = \omega(\overrightarrow{uv} \cap S_2)$.
If there is no such subpath $\overrightarrow{uv}$, then we define the local shadow to be $\calP_1$.

Let the \emph{shadow} of $S_2$ be $\sh(S_2)=\bigcup_{\nu \in S_2} \sh(\nu;S_2)$. We say that $S_2$ \emph{shadows} $S_1$ if $S_1\subseteq\sh(S_2)$. Similarly $\sh(S_1)$ is defined. 

\end{defn}

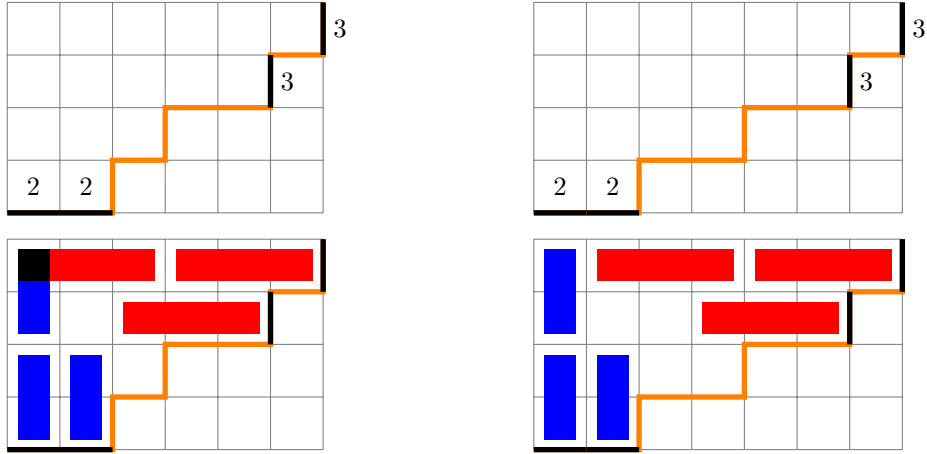
\begin{figure}[ht]
\captionsetup{width=.9\linewidth}
\begin{tikzpicture}[scale=.7]
\draw[step=1,color=gray] (0,0) grid (6,4);
\draw[line width=2,color=orange] (0,0)--(2,0)--(2,1)--(3,1)--(3,2)--(5,2)--(5,3)--(6,3)--(6,4);
\draw[line width=2pt] (0,0)--(1,0);
\draw (0.5,0.85) node[anchor=north]  {\small $2$};
\draw[line width=2pt] (1,0)--(2,0);
\draw (1.5,0.85) node[anchor=north]  {\small $2$};
\draw[line width=2pt] (6,4)--(6,3);
\draw (6,3.5) node[anchor=west]  {\small $3$};
\draw[line width=2pt] (5,3)--(5,2);
\draw (5,2.5) node[anchor=west]  {\small $3$};
%
%
\begin{scope}[shift={(10,0)}]
\draw[step=1,color=gray] (0,0) grid (7,4);
\draw[line width=2,color=orange] (0,0)--(2,0)--(2,1)--(4,1)--(4,2)--(6,2)--(6,3)--(7,3)--(7,4);
%
\draw[line width=2pt] (0,0)--(1,0);
\draw (0.5,0.85) node[anchor=north]  {\small $2$};
\draw[line width=2pt] (1,0)--(2,0);
\draw (1.5,0.85) node[anchor=north]  {\small $2$};
\draw[line width=2pt] (7,4)--(7,3);
\draw (7,3.5) node[anchor=west]  {\small $3$};
\draw[line width=2pt] (6,3)--(6,2);
\draw (6,2.5) node[anchor=west]  {\small $3$};
%
\end{scope}
\begin{scope}[shift={(0,-4.5)}]
\draw[step=1,color=gray] (0,0) grid (6,4);
\fill[blue] (0.2,0.2)--(0.2,1.8)--(0.8,1.8)--(0.8,0.2)--(0.2,0.2);
\fill[blue] (1.2,0.2)--(1.2,1.8)--(1.8,1.8)--(1.8,0.2)--(1.2,0.2);
\fill[blue] (0.2,2.2)--(0.2,2+1.8)--(0.8,2+1.8)--(0.8,2+0.2)--(0.2,2+0.2);
\fill[red] (3.2-1,3.2-1)--(3.2-1,3.8-1)--(5.8-1,3.8-1)--(5.8-1,3+0.2-1)--(3.2-1,3.2-1);
\fill[red] (3.2+1-1,3.2)--(3.2+1-1,3.8)--(5.8+1-1,3.8)--(5.8+1-1,3+0.2)--(3.2+1-1,3.2);
\fill[red] (3.2+1-3-1,3.2)--(3.2+1-3-1,3.8)--(5.8+1-3-1,3.8)--(5.8+1-3-1,3+0.2)--(3.2+1-3-1,3.2);
\fill[black] (3.2+1-3-1,3.2)--(3.2+1-3-1,3.8)--(3.8+1-3-1,3.8)--(3.8+1-3-1,3+0.2)--(3.2+1-3-1,3.2);
\draw[line width=2,color=orange] (0,0)--(2,0)--(2,1)--(3,1)--(3,2)--(5,2)--(5,3)--(6,3)--(6,4);
%
\draw[line width=2pt] (0,0)--(1,0);
\draw[line width=2pt] (1,0)--(2,0);
\draw[line width=2pt] (6,4)--(6,3);
\draw[line width=2pt] (5,3)--(5,2);
%
%
\begin{scope}[shift={(10,0)}]
\draw[step=1,color=gray] (0,0) grid (7,4);
\draw[line width=2,color=orange] (0,0)--(2,0)--(2,1)--(4,1)--(4,2)--(6,2)--(6,3)--(7,3)--(7,4);
%
\draw[line width=2pt] (0,0)--(1,0);
\draw[line width=2pt] (1,0)--(2,0);
\draw[line width=2pt] (7,4)--(7,3);
\draw[line width=2pt] (6,3)--(6,2);
%
\fill[blue] (0.2,0.2)--(0.2,1.8)--(0.8,1.8)--(0.8,0.2)--(0.2,0.2);
\fill[blue] (1.2,0.2)--(1.2,1.8)--(1.8,1.8)--(1.8,0.2)--(1.2,0.2);
\fill[blue] (0.2,2.2)--(0.2,2+1.8)--(0.8,2+1.8)--(0.8,2+0.2)--(0.2,2+0.2);
\fill[red] (3.2,3.2-1)--(3.2,3.8-1)--(5.8,3.8-1)--(5.8,3+0.2-1)--(3.2,3.2-1);
\fill[red] (3.2+1,3.2)--(3.2+1,3.8)--(5.8+1,3.8)--(5.8+1,3+0.2)--(3.2+1,3.2);
\fill[red] (3.2+1-3,3.2)--(3.2+1-3,3.8)--(5.8+1-3,3.8)--(5.8+1-3,3+0.2)--(3.2+1-3,3.2);
\end{scope}
\end{scope}
\end{tikzpicture}
\caption{In the top images, we depict gradings $\omega_1$ and $\omega_2$ on the Dyck paths $\calP(6,4)$ and $\calP(7,4)$, where edges with no weight shown are assigned weight $0$.  In the figures below, we draw blue rectangles above each horizontal edge $e$ with total height equal to the size of the local shadow of $e$, partitioned into the vertical weights contributing to the local shadow and continuing cyclically if they extend beyond the bounds of the path.  Similarly, we draw red rectangles to the left of each edge in $E_2$.  A grading is compatible if and only if the interiors of these rectangles are disjoint. Thus, the grading $\omega_1$ on $\calP(6,4)$ is not compatible, while the grading $\omega_2$ on $\calP(7,4)$ is. 
} 
\label{fig: compatible grading example}
\end{figure}

\begin{exmp}
   Consider $\omega_2$ as in  Example~\ref{main_exmp}. Let $S_1=\{u_1,u_2\}$ and $S_2=\{v_3,v_4\}$. Then $\sh(v_3;S_2)=\{u_4,u_5,u_6\}$ and $\sh(v_4;S_2)=\{u_2,u_3,\dots,u_7\}=\sh(S_2)$. Note that $\sh(S_1)=\{v_1,\dots,v_4\}$, so $S_1$ shadows $S_2$.
\end{exmp}

Partially motivated by \cite{BL}, we discovered the following definition, which is our main contribution to this paper.

\begin{defn}[tight grading] \label{main_def} Fix $\epsilon\in\{-1,1\}$. 
Fix a function $M_\epsilon:\mathbb{Z}_{>0}^2\longrightarrow \mathbb{Z}_{>0}^2$ such that if $(m,n)=M_\epsilon(\beta_1,\beta_2)$ then $$m \ge \beta_1,\quad n \ge \beta_2,\quad \text{ and }\quad \beta_1 n - \beta_2 m = \epsilon\gcd(\beta_1,\beta_2).$$
Let $(\beta_1,\beta_2)\in \mathbb{Z}_{>0}^2$. A compatible grading $\omega:E=E(M_\epsilon(\beta_1,\beta_2))\longrightarrow \mathbb{Z}_{\ge0}$ is called \emph{tight} if 
$$\omega(E_1)=\beta_2;$$
$$\omega(E_2)=\beta_1;$$
$$S_1\subseteq\sh(S_2)\text{ and }\epsilon=1,\text{ or }S_2\subseteq\sh(S_1)\text{ and }\epsilon=-1,$$
where $S_1$ is the set of horizontal edges $h$ with $\omega(h)>0$, and $S_2$ is the set of vertical edges $v$ with $\omega(v)>0$.
\end{defn}

\begin{exmp}\label{tight_exmp}
(1) The grading $\omega_2$ as in Example~\ref{main_exmp} is not tight despite $S_2\subseteq\sh(S_1)$, because $(m,n)=(7,4)$ does  not satisfy $\beta_1 n - \beta_2 m = \pm\gcd(\beta_1,\beta_2)$ for $(\beta_1,\beta_2)=(6,4)$.

\noindent (2) Let $(\beta_1,\beta_2)=(2,1)$ and $(m,n)=(3,1)$. Consider $\mathcal{P}(3,1)$.
\begin{tikzpicture}[scale=.3]
\draw[step=1,color=gray] (0,0) grid (3,1);
\draw[line width=2,color=orange] (0,0)--(3,0)--(3,1);
\fill[blue] (0.2,0.2)--(0.2,0.8)--(0.8,0.8)--(0.8,0.2)--(0.2,0.2);
\fill[red] (1.2,0.2)--(1.2,0.8)--(2.8,0.8)--(2.8,0.2)--(1.2,0.2);
\end{tikzpicture}
Suppose that $\omega(u_1)=1$, $\omega(u_2)=\omega(u_3)=0$, and $\omega(v_1)=2$. Then $\omega$ is tight. 

\noindent (3) Let $(\beta_1,\beta_2)=(4,2)$ and $(m,n)=(5,2)$. Consider $\mathcal{P}(5,2)$.
\begin{tikzpicture}[scale=.3]
\draw[step=1,color=gray] (0,0) grid (5,2);
\draw[line width=2,color=orange] (0,0)--(3,0)--(3,1)--(5,1)--(5,2);
\fill[blue] (0.2,0.2)--(0.2,0.8)--(0.8,0.8)--(0.8,0.2)--(0.2,0.2);
\fill[blue] (1+0.2,0.2)--(1+0.2,0.8)--(1+0.8,0.8)--(1+0.8,0.2)--(1+0.2,0.2);
\fill[blue] (0.2,1+0.2)--(0.2,1+0.8)--(0.8,1+0.8)--(0.8,1+0.2)--(0.2,1+0.2);
\fill[red] (2.2,0.2)--(2.2,0.8)--(2.8,0.8)--(2.8,0.2)--(2.2,0.2);
\fill[red] (2.2-1,1+0.2)--(2.2-1,1+0.8)--(2.8-1,1+0.8)--(2.8-1,1+0.2)--(2.2-1,1+0.2);
\fill[red] (2.2,1+0.2)--(2.2,1+0.8)--(4.8,1+0.8)--(4.8,1+0.2)--(2.2,1+0.2);
\end{tikzpicture}
Suppose that $\omega(u_1)=\omega(u_2)=\omega(v_1)=1$, $\omega(v_2)=3$, and $\omega(u_3)=\omega(u_4)=\omega(u_5)=0$. Then $\omega$ is tight. 

\noindent (4) Let $(\beta_1,\beta_2)=(6,3)$ and $(m,n)=(7,3)$.  Consider $\mathcal{P}(7,3)$.
\begin{tikzpicture}[scale=.3]
\draw[step=1,color=gray] (0,0) grid (7,3);
\draw[line width=2,color=orange] (0,0)--(3,0)--(3,1)--(5,1)--(5,2)--(7,2)--(7,3);
\fill[blue] (0.2,0.2)--(0.2,0.8)--(0.8,0.8)--(0.8,0.2)--(0.2,0.2);
\fill[blue] (1+0.2,0.2)--(1+0.2,0.8)--(1+0.8,0.8)--(1+0.8,0.2)--(1+0.2,0.2);
\fill[blue] (2+0.2,0.2)--(2+0.2,0.8)--(2+0.8,0.8)--(2+0.8,0.2)--(2+0.2,0.2);
\fill[blue] (0.2,1+0.2)--(0.2,1+0.8)--(0.8,1+0.8)--(0.8,1+0.2)--(0.2,1+0.2);
\fill[blue] (1+0.2,1+0.2)--(1+0.2,1+0.8)--(1+0.8,1+0.8)--(1+0.8,1+0.2)--(1+0.2,1+0.2);
\fill[blue] (0.2,2+0.2)--(0.2,2+0.8)--(0.8,2+0.8)--(0.8,2+0.2)--(0.2,2+0.2);
\fill[red] (2.2,1+0.2)--(2.2,1+0.8)--(4.8,1+0.8)--(4.8,1+0.2)--(2.2,1+0.2);
\fill[red] (2.2-1,1+1+0.2)--(2.2-1,1+1+0.8)--(4.8-1,1+1+0.8)--(4.8-1,1+1+0.2)--(2.2-1,1+1+0.2);
\fill[red] (2.2-1+3,1+1+0.2)--(2.2-1+3,1+1+0.8)--(4.8-1+3,1+1+0.8)--(4.8-1+3,1+1+0.2)--(2.2-1+3,1+1+0.2);
\end{tikzpicture}
Suppose that $\omega(v_2)=\omega(v_3)=3$, $\omega(u_1)=\omega(u_2)=\omega(u_3)=1$, and $\omega(v_1)=\omega(u_4)=\omega(u_5)=\omega(u_6)=\omega(u_7)=0$. Then $\omega$ is tight. 

\noindent (5) Let $(\beta_1,\beta_2)=(12,8)$ and $(m,n)=(14,9)$. Then the grading $\omega$ given in the first page is tight. There are total 14 tight gradings such that $\omega(h)=2$ for exactly four horizontal edges $h$, $\omega(v)=3$ for exactly four vertical edges $v$, and  $\omega(e)=0$ for all other edges  on $\mathcal{P}(14,9)$.
\end{exmp}

\begin{rem}
The word ``tight" is coined by the tight space between blue and red rectangles.
\end{rem}

\section{The first main theorem}\label{sec: first main thm}

We consider a slight variant of \emph{the tropical vertex} studied by Gross, Pandharipande, and Siebert in \cite{GPS}. Let $p_{i, j}$ be variables of degree $j$ for $i = 1, 2$ and $j\in \mathbb Z_{\geq 1}$, and $p_{1, 0} = p_{2, 0} = 1$. Fix a ground field $\Bbbk$ of characteristic zero. Let $R$ be the graded completion of the (infinitely generated) graded polynomial algebra 
\[
    A = \bigoplus_{d\geq 0} A_d = \Bbbk [p_{i, j}\mid i = 1, 2,\, j\in \mathbb Z_{\geq 1}].
\]
Let $I_k$ denote the ideal generated by elements of degree at least $k$. Let $\mathbb T = \Bbbk [x^{\pm1}, y^{\pm1}]$. Consider the complete $R$-algebra 
\[
    \mathbb T \widehat \otimes_\Bbbk R \coloneqq \lim_{\longleftarrow} \mathbb T\otimes_\Bbbk A/I_k =  \prod_{d\geq 0} \mathbb T\otimes_\Bbbk A_d.
\]

\begin{defn}[weight]
    The \emph{weight} of a grading $\omega:E(m,n)\longrightarrow\mathbb{Z}_{\ge0}$ is defined as 
    \[
        \text{wt}(\omega)=\prod_{i=1}^m p_{2,\omega(u_i)} \prod_{j=1}^n  p_{1,\omega(v_j)}\in R.
    \]
\end{defn}

\begin{exmp}\label{weight_exmp}
If $\omega$ is as in Example~\ref{tight_exmp}(2), then $\text{wt}(\omega)=p_{1,2}p_{2,1}$. If $\omega$ is as in Example~\ref{tight_exmp}(3), then $\text{wt}(\omega)=p_{1,1}p_{1,3}p_{2,1}^2$. If $\omega$ is as in Example~\ref{tight_exmp}(4), then $\text{wt}(\omega)=p_{1,3}^2 p_{2,1}^3$.
\end{exmp}

We consider a class of automorphisms in $\operatorname{Aut}_{R}(\mathbb T\widehat \otimes_\Bbbk R)$. For $(a, b)\in \mathbb Z^2$ and an element
\begin{equation}\label{eq: tropical vertex generator}
    f \in 1 + x^ay^b \prod_{d\geq 1} \mathbb \Bbbk[x^ay^b] \otimes_\Bbbk A_d,
\end{equation}
define $T_{(a, b), f}\in \operatorname{Aut}_{R}(\mathbb T\widehat \otimes_\Bbbk R)$ by
\[
    T_{(a,b), f}(x) = f^{-b}\cdot x \quad \text{and} \quad T_{(a,b), f}(y) = f^a \cdot y.
\]

Let $P_1 = \sum_{k\geq 0} p_{1, k}x^k$ and $P_2 = \sum_{k\geq 0} p_{2, k}y^k$. As observed by Kontsevich and Soibelman \cite{KS} (see also \cite[Theorem 1.3]{GPS}), there exists a unique factorization 
into an infinite ordered product
\begin{equation}\label{eq: factorization tropical vertex}
    T_{(1,0),{P}_1}T_{(0,1),{P}_2} =\prod_{b/a \text{ decreasing}} T_{(a,b),f_{(a,b)}},
\end{equation}
where the product ranges over all coprime pairs $(a,b)\in\mathbb{Z}_{\ge0}^2$ and $f_{(a, b)}$ is of the from (\ref{eq: tropical vertex generator}). The infinite product is understood as the limit of finite products when modulo each $I_k$.

Our first main theorem explicitly computes the elements $f_{(a,b)}$ in terms of tight gradings introduced in \Cref{sec: tight grading}. The outline of proof will be given in \Cref{sec: proof first main theorem}.

\begin{thm}\label{first_main_thm}
    Fix $\epsilon\in\{-1,1\}$ and a function $M_\epsilon$ as in Definition~\ref{main_def}.  
    For any coprime $(a, b)\in \mathbb Z^2_{\geq 0}$, the element $f_{(a,b)}$ in the factorization (\ref{eq: factorization tropical vertex}) is given by
    \begin{equation}\label{eq: tight grading formula}
        f_{(a,b)} = 1+\sum_{k\geq 1} \sum_\omega \emph{wt}(\omega) x^{ka} y^{kb},
    \end{equation}
    where the second sum is over all tight gradings 
    \[
        \omega:E=E(M_\epsilon(ka,kb))\longrightarrow\mathbb{Z}_{\ge0}
    \]
    with $\omega(E_1)=kb$ and $\omega(E_2)=ka$.
\end{thm}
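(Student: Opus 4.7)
The plan is to prove (\ref{eq: tight grading formula}) by verifying the factorization identity (\ref{eq: factorization tropical vertex}) with the candidate $\tilde f_{(a,b)}$ defined by the right-hand side of (\ref{eq: tight grading formula}), using the uniqueness of the Kontsevich--Soibelman ordered factorization. Since the factorization is unique modulo each graded ideal $I_k$, the problem reduces to checking
\[
  T_{(1,0), P_1}\, T_{(0,1), P_2} \equiv \prod_{b/a\text{ decreasing}} T_{(a,b), \tilde f_{(a,b)}} \pmod{I_k}
\]
for every $k \ge 1$; at each fixed $k$, only finitely many factors on the right contribute nontrivially.

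Geometrically, this is equivalent to constructing a consistent rank-$2$ scattering diagram $\mathfrak D$ whose incoming walls $\mathbb R_{\le 0}(1,0)$ and $\mathbb R_{\le 0}(0,1)$ carry $P_1$ and $P_2$, and computing its outgoing ray functions. By work of Mou \cite{Mou} (see also \cite{CKM}), the outgoing rays of $\mathfrak D$ lie along $\mathbb R_{\le 0}(a,b)$ for each coprime $(a,b)\in \mathbb Z_{\ge 0}^2$. For such a ray, the pair $(m,n) = M_\epsilon(ka, kb)$ should be interpreted as a choice of lattice test point just off the ray: the defining condition $\beta_1 n - \beta_2 m = \epsilon\gcd(\beta_1,\beta_2)$ picks out a primitive co-direction, with the sign $\epsilon$ specifying which side. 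A preliminary step is to show that enlarging $(m,n)$ does not introduce new tight gradings (the shadow conditions force any "extra" edges to carry value $0$), so that the formula is well-defined independent of the particular $M_\epsilon$.

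Building on the compatible-grading framework of Rupel \cite{Rupgengreed, Rup2}, which extends the Lee--Schiffler Dyck-path sums \cite{LS}, I would identify the weighted sum $\sum_\omega \mathrm{wt}(\omega)$ over tight gradings with the coefficient of $x^{ka}y^{kb}$ in the wall-function on the $(a,b)$-ray. Rupel's compatible gradings already encode greedy-basis elements of the generalized rank-$2$ cluster algebra; the extra tightness condition $S_1\subseteq \sh(S_2)$ (or its dual for $\epsilon = -1$) should be shown to isolate exactly those contributions attributable to a \emph{single} outgoing ray, stripping off the recursive contributions from rays of smaller slope. Concretely, a pentagon-style rewriting of the form
\[
  T_{(1,0), P_1}\, T_{(0,1), P_2} \;=\; T_{(0,1), P_2'}\cdots T_{(a,b), f_{(a,b)}}\cdots T_{(1,0), P_1'}
\]
that isolates one ray at a time yields a recursion for $f_{(a,b)}$ in terms of lower-slope data, and I would verify that $\tilde f_{(a,b)}$ obeys the same recursion by constructing a bijection between tight gradings and the monomials produced by the rewriting.

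The main obstacle is this combinatorial bijection. Because $P_1, P_2$ are polynomials of arbitrary degree rather than binomials (as in the classical cluster case), the iterated commutators yield a rich mixture of monomials in the $p_{i,j}$, each recording a particular sequence of wall-crossings. The rectangle-tiling picture of Figure \ref{fig: intro tight grading} is the right visualization: each rectangle should correspond to an elementary wall-crossing contributing a single $p_{i,\omega(e)}$ factor, the compatibility condition encodes that these crossings are disjoint events, and the tightness condition forces them to be packed into the thinnest admissible strip around the $(a,b)$-ray, corresponding to the chosen co-direction $(n,-m)$. A clean bijection should emerge by developing the broken-line description of the wall-function on the $(a,b)$-ray, decomposing each broken line into its consecutive wall-crossings, and matching these to the rectangles of a tight grading; the $\epsilon = -1$ case would then follow from the $\epsilon=1$ case via the symmetry exchanging $(P_1, P_2)$ and reversing $\mathfrak D$.
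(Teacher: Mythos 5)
Your proposal identifies the right geometric picture (compute the outgoing wall-functions of the rank-$2$ consistent scattering diagram with incoming functions $P_1,P_2$) and the right combinatorial object to track (tight gradings as the shadow-constrained compatible gradings), but it leaves the central step as an unproven hope and does not bring in the decisive external ingredient the paper actually leans on. Concretely, you propose a ``pentagon-style rewriting'' that peels off one ray at a time and a direct bijection between tight gradings and the monomials so produced. No such clean recursion for an individual $f_{(a,b)}$ is available in general; the tropical vertex factorization only determines the $f_{(a,b)}$ jointly and order-by-order, and inside the Badlands there is no algebraic reduction to lower-slope data. The paper sidesteps this entirely: rather than manipulate the ordered factorization, it matches the greedy element $x[m,n]$ (defined by a sum over \emph{compatible} gradings) with the theta function $\vartheta_{(-m,-n)}$, invoking the greedy $=$ theta basis theorem of Cheung--Gross--Muller--Musiker--Rupel--Stella--Williams as the engine, and then isolates a single broken line.

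The actual argument has four steps you would need and do not supply. First, with $(m,n)=M_{-1}(ka,kb)$ one shows there is a \emph{unique} broken line of initial exponent $(-m,-n)$ and final exponent $(-m+ka,-n+kb)$ terminating in the first quadrant that does not bend at the $x$-axis, and that it bends exactly once, with multiplicity $k$, at the ray of slope $b/a$; its weight is therefore $\lambda(ka,kb)$. Second, one stratifies both broken lines and compatible gradings by the ``multiplicity $t$'' at the $x$-axis (resp.\ the grading mass landing outside $\sh(S_2)$) and proves $|\BL^t_+|=|\CG^t_+|$ by a downward induction on $kb-t$, where the base step is a product of binomial coefficients and the inductive step uses the total equality $|\BL|=|\CG|$ from [CGM] together with explicit binomial factors. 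Third, one checks $\CG^0_+$ is exactly the set of tight gradings. Fourth, to pass from binomial to polynomial $P_i$, one needs Rupel's support bound for generalized greedy elements and a generalization of [CGM, Scholium 2.6] characterizing greedy elements by support containment, together with the observation that the support estimates for theta functions in [CGM, Section 5] go through verbatim since they only use that non-axis walls lie in the third quadrant. Your proposal mentions Rupel's compatible gradings but not this support-characterization machinery, and it omits the greedy$=$theta input entirely, which is not a small omission: it is the theorem that makes the broken-line to Dyck-path translation possible without a from-scratch bijection. Your closing remark about the $\epsilon=-1$ case following by symmetry is fine, and your preliminary claim that the choice of $M_\epsilon$ does not matter is indeed needed and true (replacing $(m,n)$ by $(m+a,n+b)$ does not change the weighted count since the shadow conditions force the extra edges to carry value $0$), but these are minor points next to the missing core.
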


A grading $w\colon E\rightarrow \mathbb Z_{\geq 0}$ is said to be \emph{bounded} by $(\ell_1, \ell_2)\in \mathbb N^2$ if $w\vert_{E_i}$ is bounded above by $\ell_i$ for $i = 1, 2$. Let $P_{i, \ell_i}$ be the polynomial obtained by letting $p_{i, k} = 0$ for $k>\ell_i$ in $P_i$. By the functoriality of (\ref{eq: factorization tropical vertex}), we have a direct corollary of \Cref{first_main_thm}.

\begin{cor}\label{cor: bounded tight grading formula}
    The product $T_{(1, 0), P_{1, \ell_1}}T_{(0, 1), P_{2, \ell_2}}$ uniquely factorizes into an infinite ordered product of $T_{(a, b), f_{(a, b)}}$ as in (\ref{eq: factorization tropical vertex}) where
    \[
        f_{(a,b)} = f_{(a, b)}(P_{1, \ell_1}, P_{2, \ell_2}) = 1+\sum_{k\geq 1} \sum_\omega \emph{wt}(\omega) x^{ka} y^{kb},
    \]
    and the second sum is over the tight gradings in \Cref{first_main_thm} but bounded by $(\ell_1, \ell_2)$.
\end{cor}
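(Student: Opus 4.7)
The plan is to derive this corollary as a formal specialization of \Cref{first_main_thm} using the functoriality of the Kontsevich--Soibelman factorization. First, I would set up the specialization map: the variables $p_{1,k}$ for $k > \ell_1$ together with $p_{2,k}$ for $k > \ell_2$ generate a closed homogeneous ideal $J \subset R$; let $R' := R/J$ with quotient $\pi\colon R \twoheadrightarrow R'$. Then $\pi$ is a graded $\Bbbk$-algebra homomorphism sending $P_i$ to $P_{i,\ell_i}$, and it induces a continuous $\Bbbk$-algebra homomorphism $\pi_*\colon \mathbb T \widehat\otimes_\Bbbk R \to \mathbb T \widehat\otimes_\Bbbk R'$ that is the identity on the torus factor $\mathbb T$.

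Next, I would verify that the tropical-vertex automorphisms are natural in the coefficient ring. Directly from the defining rules $T_{(a,b),f}(x) = f^{-b}x$ and $T_{(a,b),f}(y) = f^a y$, one obtains $\pi_* \circ T_{(a,b),f} = T_{(a,b),\pi_*(f)} \circ \pi_*$ for any $f$ of the form~(\ref{eq: tropical vertex generator}). Applying $\pi_*$ to both sides of the identity~(\ref{eq: factorization tropical vertex}) then yields
\[
    T_{(1,0), P_{1,\ell_1}} T_{(0,1), P_{2,\ell_2}} = \prod_{b/a \text{ decreasing}} T_{(a,b), \pi_*(f_{(a,b)})}
\]
in $\operatorname{Aut}_{R'}(\mathbb T \widehat\otimes_\Bbbk R')$. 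Convergence of the image product is automatic because $\pi_*$ is degree-preserving, so it descends the degree-by-degree convergence of the original. Invoking the Kontsevich--Soibelman uniqueness of such ordered factorizations (applied verbatim over $R'$), I conclude that this is the sought-after factorization of $T_{(1,0),P_{1,\ell_1}} T_{(0,1),P_{2,\ell_2}}$ and that $f_{(a,b)}(P_{1,\ell_1}, P_{2,\ell_2}) = \pi_*(f_{(a,b)})$.

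Finally, I would read off which terms of the tight-grading formula of \Cref{first_main_thm} survive under $\pi$. Since each weight $\text{wt}(\omega) = \prod_i p_{2, \omega(u_i)} \prod_j p_{1, \omega(v_j)}$ is a monomial in the generators $p_{i,k}$, the map $\pi$ either fixes it or sends it to zero, according to whether none of its factors lies in $J$. Unpacking $J$ together with the weight, the nonvanishing condition is precisely that the values of $\omega$ on each kind of edge respect the corresponding truncation bound, i.e.\ that $\omega$ is bounded by $(\ell_1, \ell_2)$ in the sense defined. Retaining only these surviving gradings produces the claimed formula. I do not foresee a substantive obstacle here: the naturality is a formal two-line check, the uniqueness of the ordered factorization is classical, and all the real content of the corollary is already carried by \Cref{first_main_thm}.
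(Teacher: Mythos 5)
Your proposal is correct and is precisely the "functoriality" argument that the paper itself gestures at when deriving this as a direct corollary; the specialization map $\pi\colon R\twoheadrightarrow R/J$, the intertwining identity $\pi_*\circ T_{(a,b),f} = T_{(a,b),\pi_*(f)}\circ\pi_*$, and the invocation of Kontsevich--Soibelman uniqueness over the truncated base ring are exactly the right ingredients.

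One detail worth actually carrying out rather than asserting: when you ``unpack $J$ together with the weight,'' note that
\[
\operatorname{wt}(\omega)=\prod_{i=1}^{m} p_{2,\omega(u_i)}\prod_{j=1}^{n} p_{1,\omega(v_j)},
\]
so the horizontal edges $u_i\in E_1$ contribute factors $p_{2,\bullet}$ and the vertical edges $v_j\in E_2$ contribute $p_{1,\bullet}$. Hence $\pi(\operatorname{wt}(\omega))\neq 0$ precisely when $\omega\vert_{E_1}\leq\ell_2$ and $\omega\vert_{E_2}\leq\ell_1$, which is the index-swapped form of the literal reading of the paper's definition of ``bounded by $(\ell_1,\ell_2)$'' (which says $\omega\vert_{E_i}\leq\ell_i$). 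The paper's own \Cref{ex: G2 f(2,1)} (with $(\ell_1,\ell_2)=(3,1)$ and gradings taking values up to $3$ on vertical edges and up to $1$ on horizontal edges) shows the intended meaning is the survival condition you derive, so your conclusion stands; but the point of the exercise is to actually compute which monomials $\pi$ annihilates rather than to take on faith that it agrees with a definition stated elsewhere.
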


\begin{exmp}\label{ex: G2 f(2,1)}
    Let $(\ell_1, \ell_2) = (3, 1)$ and $(a,b)=(2,1)$. Fix $\epsilon=-1$ and let $M_\epsilon$ be such that $M_\epsilon(ka,kb)=(ka+1,kb)$ for all $k\ge1$. Then the gradings as in Example~\ref{tight_exmp}(2)(3)(4) are the only tight gradings of the form $\omega:E=E(ka+1,kb)\longrightarrow\mathbb{Z}_{\ge0}$ with $\omega(E_1)=kb$ and $\omega(E_2)=ka$ bounded by $(3, 1)$.
    Thus,  Example~\ref{weight_exmp} implies $$f_{(2,1)} = 1 + p_{1,2}p_{2,1} x^2 y + p_{1,1}p_{1,3}p_{2,1}^2 x^4 y^2 + p_{1,3}^2 p_{2,1}^3 x^6 y^3.$$
\end{exmp}

Combining with \cite{GPS}, we describe a link to Gromov--Witten theory as follows.
For any ordered partitions $\mathbf P$ and $\mathbf Q$ respectively of size $ka$ and $kb$, and of length $\ell_1$ and $\ell_2$, there is a Gromov--Witten invariant (as defined in \cite[Section 4]{GPS})
\[
    N_{a,b}[(\mathbf P, \mathbf Q)] \in \mathbb Q
\]
defined on the weighted projective plane $X_{a, b}$ under a certain relative condition with respect to the toric boundary. Take a specialization of the variables $p_{i, j}$ in $\Bbbk \llbracket s, t \rrbracket$ so that 
\[
    P_1 = (1 + sx)^{\ell_1} \quad \text{and} \quad P_2 = (1 + ty)^{\ell_2}.
\]
Now the weight $\mathrm{wt}(\omega)$ is understood with this specialization, thus a monomial in $s$ and $t$ with a positive integer scalar, which we denote by $c(\omega)s^{ka}t^{kb}$. As a direct corollary of \Cref{cor: bounded tight grading formula} and \cite[Theorem 5.4]{GPS} (which computes $\log f_{(a,b)}(P_1, P_2)$ in terms of $N_{a,b}[(\mathbf P, \mathbf Q)]$), we have

\begin{cor}\label{cor: tight gradings GW invariants}
    For fixed $\ell_1, \ell_2$ and any coprime $(a, b)\in \mathbb Z_{>0}^2$, we have
    \begin{equation*}
        \log \left( 1+\sum_{k\geq 1} \sum_{\omega} c(\omega) s^{ka}t^{kb} x^{ka}y^{kb} \right)
        = \sum_{k\geq 1} \sum_{\mathbf P, \mathbf Q} k N_{a, b}[(\mathbf P, \mathbf Q)] s^{ka}t^{kb} x^{ka}y^{kb}
    \end{equation*}
    where the second sum on the left is over all tight gradings in \Cref{cor: bounded tight grading formula} and the second sum on the right is over all ordered partitions $\mathbf P$ and $\mathbf Q$ respectively of size $ka$ and $kb$, and of length $\ell_1$ and $\ell_2$.
\end{cor}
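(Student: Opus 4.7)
The plan is to deduce the identity as a direct combination of Corollary~\ref{cor: bounded tight grading formula} and \cite[Theorem 5.4]{GPS}, both of which describe $f_{(a,b)}(P_1, P_2)$ under the specialization $P_1 = (1+sx)^{\ell_1}$, $P_2 = (1+ty)^{\ell_2}$. The argument is essentially a translation between two expressions for the same power series.

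First I would unpack the specialization. Substituting $p_{1,j} = \binom{\ell_1}{j}s^j$ and $p_{2,j} = \binom{\ell_2}{j}t^j$ into the definition $\text{wt}(\omega) = \prod_{i=1}^{m} p_{2,\omega(u_i)}\prod_{j=1}^{n} p_{1,\omega(v_j)}$ and using the constraints $\omega(E_2) = ka$, $\omega(E_1) = kb$ satisfied by any tight grading indexed in Corollary~\ref{cor: bounded tight grading formula}, the $s$- and $t$-exponents consolidate into $s^{ka}t^{kb}$ and we obtain
\[
\text{wt}(\omega) = \left(\prod_{i=1}^{m}\binom{\ell_2}{\omega(u_i)}\right)\left(\prod_{j=1}^{n}\binom{\ell_1}{\omega(v_j)}\right) s^{ka}t^{kb}.
\]
This identifies $c(\omega)$ explicitly as the displayed product of binomial coefficients, in particular a positive integer, agreeing with the notation $c(\omega)s^{ka}t^{kb}$ introduced before the corollary. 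Inserting this into Corollary~\ref{cor: bounded tight grading formula} then yields
\[
f_{(a,b)}(P_{1,\ell_1}, P_{2,\ell_2}) = 1 + \sum_{k\geq 1}\left(\sum_\omega c(\omega)\, s^{ka}t^{kb}\right) x^{ka}y^{kb}.
\]

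Next I would invoke \cite[Theorem 5.4]{GPS}, which for exactly this specialization identifies $\log f_{(a,b)}(P_1, P_2)$ with the generating series
\[
\sum_{k\geq 1}\sum_{\mathbf P,\mathbf Q} k\, N_{a,b}[(\mathbf P,\mathbf Q)]\, s^{ka}t^{kb}x^{ka}y^{kb},
\]
where $\mathbf P$ and $\mathbf Q$ range over ordered partitions of sizes $ka$ and $kb$ and lengths $\ell_1$ and $\ell_2$. Taking the logarithm of the tight-grading expression from the previous paragraph and equating it to this Gromov--Witten expression gives the desired identity.

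There is no substantive technical obstacle here; everything of depth is already contained in Corollary~\ref{cor: bounded tight grading formula} (which rests on Theorem~\ref{first_main_thm}) and in the Gromov--Witten computation of \cite{GPS}. The only verification required is that the specialization $P_i = (1+\,\cdot\,)^{\ell_i}$ matches the one used in the hypothesis of \cite[Theorem 5.4]{GPS}, which is immediate since both sides are naturally set up with polynomials having a single linear factor raised to a power. The main conceptual content is thus in recognizing $c(\omega)$ as a counting coefficient compatible with the enumerative invariants $N_{a,b}[(\mathbf P, \mathbf Q)]$, a correspondence that emerges automatically once the tight-grading formula is specialized.
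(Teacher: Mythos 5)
Your proposal is correct and follows the same route the paper takes: the paper states the corollary as an immediate consequence of Corollary~\ref{cor: bounded tight grading formula} and \cite[Theorem 5.4]{GPS}, and your write-up simply makes explicit the specialization $p_{1,j}=\binom{\ell_1}{j}s^j$, $p_{2,j}=\binom{\ell_2}{j}t^j$, the resulting identification $c(\omega)=\prod_i\binom{\ell_2}{\omega(u_i)}\prod_j\binom{\ell_1}{\omega(v_j)}$ (using $\omega(E_1)=kb$, $\omega(E_2)=ka$ to consolidate the $s,t$-powers), and then equates the logarithm with the GPS generating series. No gap; this is exactly the intended argument spelled out.
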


With other specializations of $p_{i, j}$, the correspondence as above can be drawn to relate the Gromov--Witten invariants appearing in more complicated factorization formulas obtained in \cite[Theorem 5.4, 5.6]{GPS}.

Using quiver representation techniques, Reineke and Weist \cite{RW} (extending \cite{Rei11}) computed $f_{(a, b)}(P_1, P_2)$ in terms of Euler characteristics of certain moduli spaces of framed stable representations of the complete bipartite quiver with $\ell_1$ sources and $\ell_2$ sinks. Therefore through appropriate specializations of $p_{i, j}$, our tight grading formula \Cref{cor: bounded tight grading formula} provides a way to compute these Euler characteristics.

When $P_1$, $P_2$ are specialized to $P_{1, \ell_1}$, $P_{1, \ell_2}$, the factorization pattern \Cref{cor: bounded tight grading formula} plays an important role in rank-2 generalized cluster algebras. As we will see in \Cref{sec: sd}, this connection is better illustrated through equivalent structures called \emph{scattering diagrams}. Even in the simplest interesting case where $P_1=1+x^{\ell_1}$ and $P_2=1+y^{\ell_2}$, Theorem~\ref{first_main_thm} provides new formulae describing coefficients in $f_{(a, b)}$ which are essential in constructing the theta bases of rank-2 cluster algebras \cite{GHKK}.\footnote{If $\ell_1\ell_2< 4$, then it is not hard to describe $f_{(a,b)}$ (\Cref{exmp: G2}). It has been a formidable challenge to understand the case of $\ell_1\ell_2> 4$, due to its wild behavior.}

\section{Scattering diagrams}\label{sec: sd}

We start by explaining in Subsection~\ref{subsec: rank 2 sd} rank-2 scattering diagrams. Then we introduce in Subsection~\ref{subsec: higher rank sd} scattering diagrams in higher ranks for generalized cluster algebras. The algebraic setup in this section is adapted in slight difference with \Cref{sec: first main thm} to better suit the applications in cluster algebras.

\subsection{Scattering diagrams in rank 2}\label{subsec: rank 2 sd}

Fix a rank-$2$ lattice $M \cong \Z^2$ and choose a strictly convex rational cone $\sigma$ in  $M_\mathbb R \coloneqq M \otimes \mathbb R$. We take the monoid $P = \sigma \cap M$ and denote $P^+ \coloneqq P \cut \{0\}$. Set $\widehat {\Bbbk[P]}$ to be the monoid algebra $\Bbbk[P]$ completed at the maximal monomial ideal $\frakm$ generated by $\{x^m \mid m \in P^+\}$.

\begin{defn}
    A \emph{wall} is a pair $(\frakd, f_\frakd)$ consisting of a \emph{support} $\frakd \subseteq M_\R$ and a \emph{wall-function} $f_\frakd \in \widehat{\Bbbk [P]}$, where
    \begin{itemize}
        \item $\frakd$ is either a ray $\R_{\leq 0}w$ or a line $\R w$ for some $w \in P^+$;
        \item $\displaystyle f_\frakd = f_\frakd(x^w) = 1 + \sum_{k \geq 1} c_k x^{kw}$ for $c_k \in \Bbbk$.
    \end{itemize}
\end{defn}

Associated to a wall $(\frakd, f_\frakd)$ and a direction $v\in M_\mathbb R$ transversal to $\frakd$ is an algebra automorphism $\mathfrak p_{v, \frakd}\in \operatorname{Aut}(\widehat{\Bbbk[P]})$ defined by
\[
    \mathfrak p_{v, \frakd}(x^m) = x^m f_\frakd^{n(m)} \quad \text{for $m\in P$}
\]
where $n\in \operatorname{Hom}(M, \mathbb Z)$ is primitive and orthogonal to $\frakd$ in the direction $n(v)<0$. Clearly $\mathfrak p_{v, \frakd}^{-1} = \mathfrak p_{-v, \frakd}$.

\begin{defn}
A \emph{scattering diagram} $\frakD$ is a collection of walls such that the set
\[
    \mathfrak D_k \coloneqq \{(\frakd, f_\frakd) \in \frakD \mid f_\frakd \not \equiv 1 \ \operatorname{mod}\ \frakm^k\}
\]
is finite for each $k \geq 0$.
\end{defn}

A path $\gamma \colon [0, 1] \rightarrow M_\mathbb R$ is called \emph{regular} (with respect to $\mathfrak D$) if it is a smooth immersion with endpoints away from the support of any wall and only crosses walls transversally. For each $k\geq 1$, let $0<t_1<\dots<t_s<1$ be the longest sequence such that $\gamma(t_i) \in \mathfrak d_i$ for some wall $(\mathfrak d_i, f_{\mathfrak d_i})\in \mathfrak D_k$. Consider the product
\[
    \mathfrak p^{(k)}_{\gamma, \mathfrak D} = \mathfrak p_{\dot{\gamma}(t_s), \frakd_s} \circ \cdots \circ\mathfrak p_{\dot{\gamma}(t_1), \frakd_1}.
\]
We define the \emph{path-ordered product} of $\gamma$ with respect to $\mathfrak D$ to be
\[
    \mathfrak p_{\gamma, \mathfrak D} = \lim_{k\rightarrow \infty} \mathfrak p^{(k)}_{\gamma, \mathfrak D} \in \operatorname{Aut}(\widehat{\Bbbk[P]}).
\]

\begin{defn}
    A scattering diagram $\mathfrak D$ is called \emph{consistent} if for any regular path $\gamma$ the path-ordered product $\mathfrak p_{\gamma, \mathfrak D}$ depends only on the endpoints of $\gamma$, or equivalently if $\mathfrak p_{\gamma, \mathfrak D} = \mathrm{id}$ for a simple loop $\gamma$ around the origin.
\end{defn}

\begin{thm}[\cite{KS}]\label{thm: consistent sd rank 2}
    Given any (initial) scattering diagram $\mathfrak D_\mathrm{in}$ of only lines, there exists a unique consistent scattering diagram $\mathfrak D$ such that $\mathfrak D\cut \mathfrak D_\mathrm{in}$ consists of distinct rays with non-trivial wall-functions.
\end{thm}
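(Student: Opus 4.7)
The plan is a standard order-by-order induction in the $\frakm$-adic filtration, following Kontsevich--Soibelman. For each $k\geq 1$ I construct a finite scattering diagram $\mathfrak{D}^{(k)}$, well-defined modulo $\frakm^{k+1}$, that (i) contains $\mathfrak{D}_{\mathrm{in}}$, (ii) is consistent modulo $\frakm^{k+1}$, (iii) refines $\mathfrak{D}^{(k-1)}$ by a modification of order exactly $k+1$, and (iv) differs from $\mathfrak{D}_{\mathrm{in}}$ only by rays in $P^+$ with pairwise distinct supports. The desired $\mathfrak{D}$ is then the inverse limit of the $\mathfrak{D}^{(k)}$, and uniqueness of $\mathfrak{D}$ reduces to uniqueness at each finite stage.

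The base case $k=1$ is immediate: every wall-function lies in $1+\frakm$, so the path-ordered product around a simple loop is the identity modulo $\frakm^{2}$. For the inductive step, assume $\mathfrak{D}^{(k)}$ has been built and let $\gamma$ be a small simple loop around the origin. Consistency modulo $\frakm^{k+1}$ gives $\mathfrak{p}_{\gamma,\mathfrak{D}^{(k)}}\equiv \mathrm{id}\pmod{\frakm^{k+1}}$, and taking the logarithm in the pro-unipotent group of automorphisms yields
\[
\log \mathfrak{p}_{\gamma,\mathfrak{D}^{(k)}}\equiv \sum_{w\in P^+,\ |w|=k+1} c_w\,\partial_w \pmod{\frakm^{k+2}},
\]
where $\partial_w$ denotes the derivation $x^m\mapsto n_w(m)\,x^{m+w}$ for a fixed primitive covector $n_w$ orthogonal to $w$. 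Grouping terms by the primitive direction $w_0$ underlying each $w=\ell w_0$ produces a unique decomposition $\sum_{w_0} D_{w_0}$, where $D_{w_0}$ is supported on monomials in $\mathbb{Z}_{>0}w_0$; uniqueness follows because these sectors are pairwise disjoint.

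To build $\mathfrak{D}^{(k+1)}$, for each primitive $w_0$ with $D_{w_0}\neq 0$ I adjoin (or modify the existing ray in direction $w_0$ by) multiplying its wall-function by a correction $1+g_{w_0}\in 1+\frakm^{k+1}\cdot \Bbbk\llbracket x^{w_0}\rrbracket$, chosen so that the single-ray crossing contribution cancels $D_{w_0}$ at order $k+1$. Because the crossing automorphism along a ray is determined by its wall-function via a one-variable power series, the equation for $g_{w_0}$ is uniquely solvable; corrections at different primitive directions live in disjoint monomial sectors and therefore do not interact. Consistency of $\mathfrak{D}^{(k+1)}$ modulo $\frakm^{k+2}$ follows, and uniqueness is automatic.

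The main obstacle is the direction-by-direction decomposition of $\log \mathfrak{p}_{\gamma,\mathfrak{D}^{(k)}}$ inside the graded piece of the tropical vertex Lie algebra; this splitting, essentially equivalent to the existence and uniqueness of the ordered factorization in $(\ref{eq: factorization tropical vertex})$, is the combinatorial heart of the argument. Once this is established at every order, passing to the limit $k\to\infty$ delivers a unique consistent scattering diagram $\mathfrak{D}$ that extends $\mathfrak{D}_{\mathrm{in}}$ by distinct rays with nontrivial wall-functions, as required.
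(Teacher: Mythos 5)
The paper does not prove this statement; it simply cites \cite{KS}. Your argument is the standard order-by-order ``perturbative'' construction from \cite{KS} (also used in the rank-2 arguments of Gross--Pandharipande--Siebert and GHKK), and it is correct in structure: the log of the loop automorphism at order $k+1$ lies in the degree-$(k+1)$ graded piece of the tropical vertex Lie algebra, that piece decomposes uniquely by primitive direction $w_0$ because the monomial sectors $\mathbb Z_{>0}w_0$ are pairwise disjoint within the strictly convex cone $P^+$, and the correction of each outgoing ray's wall-function is uniquely determined since $\log(1+g_{w_0})\equiv g_{w_0}$ at leading order.

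One small point worth tightening: the base case does not follow solely from the observation that every wall-function lies in $1+\frakm$. You also need (i) that automorphisms congruent to the identity modulo $\frakm$ commute modulo $\frakm^2$, and (ii) that $\mathfrak D_{\mathrm{in}}$ consists only of \emph{lines}, so a simple loop crosses each wall twice with mutually inverse crossing automorphisms; modulo $\frakm^2$ the product then telescopes to the identity. Without the ``only lines'' hypothesis the base case would already fail, and this is exactly the hypothesis that makes the initial diagram a legitimate starting point for the induction. It would also be worth making explicit that the adjoined walls are the outgoing rays $\mathbb R_{\leq 0}w_0$, so that the resulting $\mathfrak D\cut\mathfrak D_{\mathrm{in}}$ has the form the theorem asserts.
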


\begin{rem}
    The initial collection of lines can be infinite. Even though we require the added rays in $\mathfrak D\cut \mathfrak D_\mathrm{in}$ to be distinct, one can overlap with an initial line. The factorization (\ref{eq: factorization tropical vertex}) is the special case of two lines.
\end{rem}

While the use of scattering diagrams originated in the study of mirror symmetry, they have since found remarkable applications in cluster algebras by the celebrated work of Gross, Hacking, Keel, and Kontsevich \cite{GHKK}. We exhibit a collection of consistent scattering diagrams devised for (generalized) cluster algebras in rank 2. Let $M = \mathbb Z^2$ and $\{e_1, e_2\}$ be the standard basis. Choose $\sigma$ to be the first quadrant of $M_\mathbb R = \mathbb R^2$. Denote $x = x^{e_1}$ and $y = x^{e_2}$. The initial scattering diagram will be two lines
\begin{equation}\label{eq: initial sd rank 2}
    \mathfrak D_\mathrm{in} = \{(\R e_1, {P}_1(x)), (\R e_2, {P}_2(y))\}
\end{equation}
where $P_i$ is a polynomial with constant term $1$ for $i = 1, 2$. There are infinitely many rays in $\mathfrak D\cut \mathfrak D_\mathrm{in}$ of the form $(\mathbb R_{\leq 0}(a, b), f_{(a, b)})$ for coprime $(a, b)\in \mathbb Z_{>0}^2$ unless $\deg P_1\cdot \deg P_2 < 4$ when there are finitely many.

\begin{exmp}\label{exmp: G2}
    We depict in \Cref{fig: G2} the case $\deg P_1 = 3$ and $\deg P_2 = 1$. The remaining finite cases can be obtained by specializing certain coefficients to zero. In \Cref{fig: G2}, the wall-functions on the added rays are
    \begin{align*}
        f_{(3,1)} & = 1 + p_{1,3}p_{2,1} x^3y, \\
        f_{(2,1)} & = 1 + p_{1,2}p_{2,1} x^2y + p_{1,1}p_{1,3}p_{2,1}^2 x^4 y^2 + p_{1,3}^2p_{2,1}^3 x^6y^3 \quad(\text{see Example}~\ref{ex: G2 f(2,1)}), \\
        f_{(3,2)} & = 1 + p_{1,3}p_{2,1}^2 x^3y^2, \\
        f_{(1,1)} & = 1 + p_{1,1}p_{2,1} xy + p_{1,2}p_{2,1}^2 x^2y^2 + p_{1,3}p_{2,1}^3 x^3 y^3.
    \end{align*}
\end{exmp}

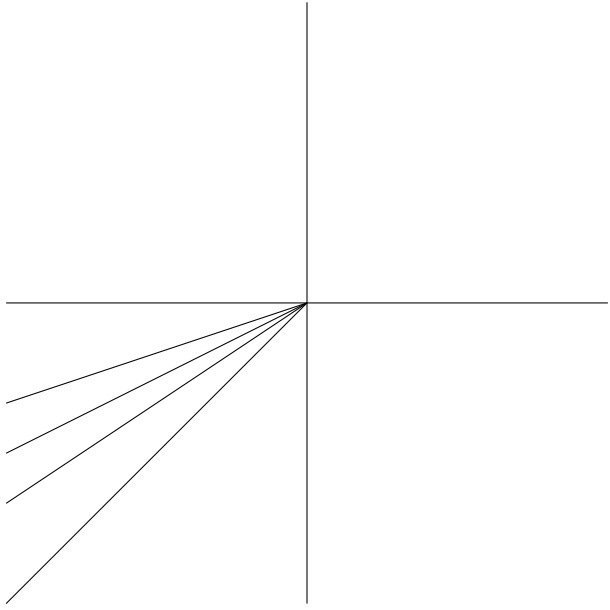
\begin{figure}
\begin{tikzpicture}[scale=.4]

\draw[] (-10,0)--(10,0);
\draw[] (0,-10)--(0,10);
\draw[] (-10, -10) -- (0, 0);
\draw[] (-10, -5) -- (0, 0);
\draw[] (-10, -20/3) -- (0, 0);
\draw[] (-10, -10/3) -- (0, 0);

\end{tikzpicture}

\caption{$P_1 = 1 + p_{1,1}x + p_{1,2}x^2 + p_{1,3}x^3$ and $P_2 = 1 + p_{2,1}y$.}
\label{fig: G2}
\end{figure}

For $P_1 = 1 + x^{\ell_1}$ and $P_2 = 1 + x^{\ell_2}$, the resulting scattering diagram $\mathfrak D_{(\ell_1, \ell_2)}$ \cite{GHKK} is famously responsible for the rank-2 cluster algebra $\mathcal A(\ell_1, \ell_2)$ \cite{FZ}. When $\ell_1 \ell_2<4$, its structure is directly derived from the example in \Cref{fig: G2} by specializing coefficients. When $\ell_1 \ell_2 \geq 4$, there is a discrete set of rays outside the closed cone spanned by
\[
    \left(-2\ell_1, -\ell_1 \ell_2 - \sqrt{\ell_1^2\ell_2^2 - 4\ell_1 \ell_2} \right)\  \text{and} \ \left(-\ell_1 \ell_2 - \sqrt{\ell_1^2\ell_2^2 - 4\ell_1 \ell_2}, -2\ell_2 \right).
\]
These rays (so-called \emph{cluster rays}) are in bijection with the cluster variables $\{x_n \mid n\in \mathbb Z,\ n\neq 0, 1, 2, 3\} \subset \mathcal A(\ell_1, \ell_2)$ such that their directions are opposite to the $d$-vectors of cluster variables. The cone itself, known as the \emph{Badlands}, has a much richer yet more elusive structure. It is known that $\frakD_{(\ell, \ell)}$ has a ray at every rational slope within the Badlands; see \cite[Section 4.7]{GP} and \cite[Example 7.10]{davison2021strong}. 
However, the wall-functions there were generally not understood, albeit closed formulas of particular slopes were proven in \cite{Rei11, RW} and some partial progress has been made in \cite{Rea, akagi2023explicit, ERS}.

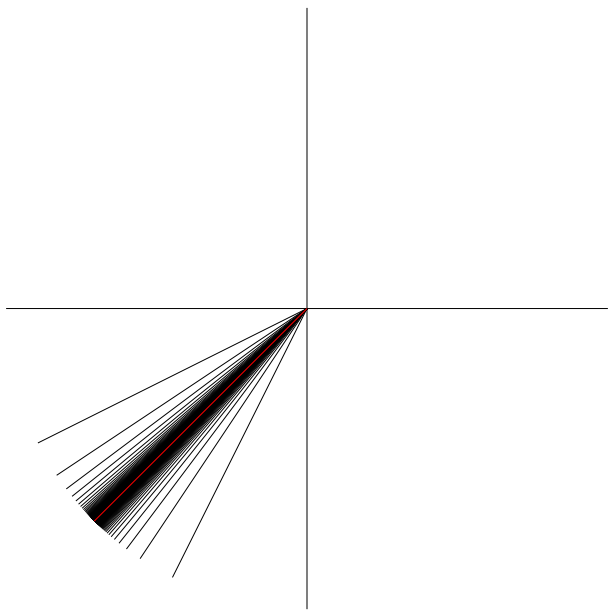
\begin{figure}
\begin{tikzpicture}[scale=.4]

\draw[line width=0.1pt] (-10,0)--(10,0);
\draw[line width=0.1pt] (0,-10)--(0,10);
\foreach \x in {2,...,200}
    {
    \draw[line width=0.1pt] (0,0)--({atan(\x/(\x-1))}:{-10 });
    \draw[line width=0.1pt] (0,0)--({atan((\x-1)/\x)}:{-10 });
    }
\draw[line width=.3pt,color=red] (0,0)--({atan(1)}:{-10 });

\end{tikzpicture}

\caption{
The walls of the scattering diagram $\frakD_{(2,2)}$ are depicted above.  The only non-cluster ray is the wall of slope $1$, shown in red. }
\label{fig: D2}
\end{figure}

\newcommand\sloperaw{2.0, 1.6, 1.5789473684, 0.4, 0.4210526316, 0.4225352113, 0.3333333333, 0.4166666667, 0.4222222222, 1.6666666667, 1.5833333333, 1.5777777778, 1.5774647887, 1.5773584906, 1.5773508595, 1.5773503116, 1.5773502722, 1.5773502694, 1.5773502692, 1.5773502692, 1.5773502692, 1.5773502692, 0.4226415094, 0.4226491405, 0.4226496884, 0.4226497278, 0.4226497306, 0.4226497308, 0.4226497308, 0.4226497308, 0.4226497308, 0.4226497308, 0.4226190476, 0.4226475279, 0.4226495726, 0.4226497195, 0.42264973,  1.5773809524, 1.5773524721, 1.5773504274, 1.5773502805, 1.57735027, 1.5773502692}

\readlist*\slope{\sloperaw}

\begin{figure}
\begin{tikzpicture}[scale=.4]
\draw[draw=none, fill=gray, opacity = 0.3] (0,0) --  ({atan(1.57735026918)}:{-10}) arc({atan(1.57735026918)}:{atan(0.4226497308)}:{-10}) -- cycle;
\draw[line width=0.1pt] (-10,0)--(10,0);
\draw[line width=0.1pt] (0,-10)--(0,10);
\draw[line width=0.1pt, color=red] (0,0)--({atan(2/3)}:{-10 });
\foreach \x in {1,...,43}   {
    \draw[line width=0.1pt] (0,0)--({atan(\slope[\x])}:{-10});
    }

\end{tikzpicture}

\caption{Suppose $P_1 = 1+x^3$ and $P_2 = 1 + y^2$. The Badlands are depicted in gray. Conjecturally, there are infinitely many non-cluster walls, lying at each rational slope within the gray cone. The wall-function on the red ray $\mathbb R_{\leq 0}(3, 2)$ is  $1+x^3 y^2 + 2x^6 y^4 + 5x^9 y^6 + 14 x^{12} y^8 + \cdots $. The coefficient 14 comes from Example~\ref{tight_exmp}(5).}
\label{fig: D32}
\end{figure}
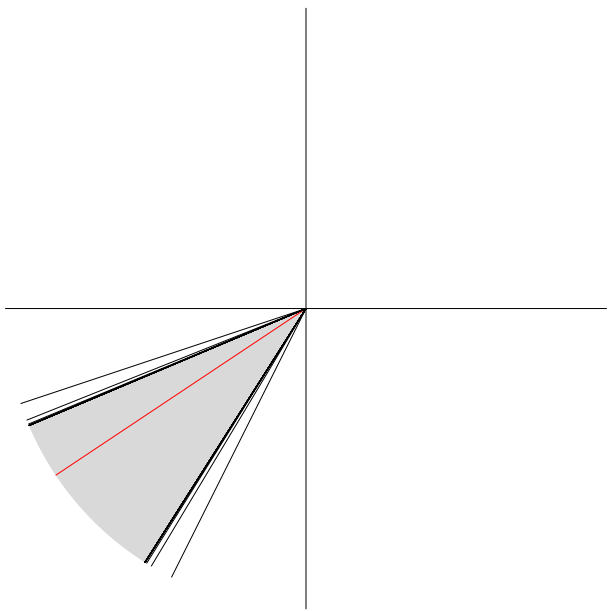

Our result from \Cref{sec: first main thm} allows one to understand directly the scattering diagrams $\mathfrak D$ with arbitrary power series as initial wall-functions
\[
    P_i(x) = 1 + p_{i, 1} x + \dots + p_{i, j} x^{j} + \dots \in \Bbbk\llbracket x \rrbracket, \quad i = 1, 2.
\]
The functions on the added rays $\mathbb R_{\leq 0}(a, b)$ with coprime $(a, b)\in \mathbb N^2$ are of the form
\begin{equation}\label{eq: wall function}
    f_{(a, b)} = 1 + \sum_{k\geq 1} \lambda(ka, kb)x^{ka}y^{kb}.
\end{equation}
\Cref{first_main_thm} then states that any $\lambda(ka, kb)$ is given by a weighted count of tight gradings. In particular, they are polynomials of $p_{i, j}$ with positive integer coefficients. Moreover using a \emph{change of lattice trick} and a \emph{perturbation trick} adapted from \cite{GPS,GHKK}, we are able to obtain the positivity in full generality for any consistent rank-2 scattering diagram obtained from \Cref{thm: consistent sd rank 2}.

\begin{thm}\label{thm: multiple initial lines}
    For an index set $S$ and an initial scattering diagram
    \[
        \mathfrak D_\mathrm{in} = \{(\mathbb Rm_i, P_i(x^{m_i}))\mid i\in S\}
    \]
    a (possibly infinite) collection of walls where $P_i = 1 + \sum_{j\geq 1} p_{i,j}x^j$, any coefficient of the wall-function of any added ray in $\mathfrak D\cut \mathfrak D_\mathrm{in}$ is in $\mathbb N[p_{i,j}\mid i\in S, j\geq 1]$.
\end{thm}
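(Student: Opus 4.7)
The plan is to reduce the statement to the two-line rank-$2$ case, which is Theorem~\ref{first_main_thm}, via a perturbation argument combined with a change-of-lattice trick adapted from \cite{GPS, GHKK}. Fix $k \geq 1$ and work modulo $\frakm^k$, so that only finitely many initial lines in $\frakD_{\mathrm{in}}$ contribute. It then suffices to prove positivity order by order and pass to the limit $k \to \infty$.

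I would first perturb the relevant initial lines: replace each $\mathbb R m_i$ by a parallel translate $\mathbb R m_i + \varepsilon q_i$ for small $\varepsilon > 0$ and generic $q_i \in M_\mathbb R$. After generic perturbation, any two lines meet transversally in a single point, and no three of the initial or subsequently produced walls are concurrent. One builds the consistent perturbed diagram modulo $\frakm^k$ by traversing the intersection points in an order compatible with causal propagation and adjoining at each crossing the outgoing walls required to make the local path-ordered product trivial.

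At each such crossing we face a local two-line scattering problem: two walls with primitive directions $m, m'$ and wall-functions $f, f'$ (either initial or produced by a previous step). Passing to the sublattice $\mathbb Z m + \mathbb Z m' \subseteq M$, in which $m$ and $m'$ form a basis, the local problem becomes exactly the factorization considered in Theorem~\ref{first_main_thm}, equivalently Corollary~\ref{cor: bounded tight grading formula}. Those results express every outgoing wall-function coefficient as a polynomial with nonnegative integer coefficients in the coefficients of $f$ and $f'$. Induction on the processing order then yields that every wall-function coefficient in the perturbed diagram lies in $\mathbb N[p_{i,j} \mid i \in S, j \geq 1]$: the base case holds for initial walls by hypothesis, and the inductive step follows because substituting power series with coefficients in $\mathbb N[p_{i,j}]$ into polynomials with nonnegative integer coefficients produces power series still with coefficients in $\mathbb N[p_{i,j}]$.

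Finally, sending $\varepsilon \to 0$ collapses the perturbed rays onto rays through the origin; when several supports coincide, their wall-functions multiply, and $\mathbb N[p_{i,j}]$-positivity is preserved under multiplication. A standard equivalence argument identifies this limit modulo $\frakm^k$ with the unique consistent diagram of Theorem~\ref{thm: consistent sd rank 2}. The main obstacle is the un-perturbation step: one must verify that generic $q_i$ indeed suffice to decompose all intersections up to order $k$ into pairwise crossings, and that the equivalence class of the limiting diagram is independent of the perturbation. Both points are handled by an infinitesimal deformation analysis as in \cite{GHKK}, carried out now with coefficients in the polynomial ring $\Bbbk[p_{i,j}]$ rather than $\Bbbk$; crucially, no new positivity input is needed beyond Theorem~\ref{first_main_thm}.
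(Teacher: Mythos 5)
Your proposal matches the paper's intended proof. The paper does not spell out a proof of this theorem—it only remarks that it follows from a \emph{change of lattice trick} and a \emph{perturbation trick} adapted from Gross--Pandharipande--Siebert and Gross--Hacking--Keel--Kontsevich, deferring details to forthcoming work—and your sketch is exactly this argument: truncate modulo $\frakm^k$, perturb the finitely many relevant lines so that all scattering up to order $k$ occurs at pairwise-transverse crossings, reduce each local two-wall problem to the universal rank-2 factorization of Theorem~\ref{first_main_thm} via a lattice change, propagate positivity inductively by substitution of $\mathbb N[p_{i,j}]$-coefficient series into positive polynomials, and then unperturb. One small imprecision worth flagging: when you pass to the sublattice $\mathbb Z m + \mathbb Z m'$, the wall-crossing automorphisms do not literally restrict as stated, because the primitive normal to each wall in the dual of the sublattice is a proper rational multiple of the one in $N$; concretely the wall-functions $f, f'$ must be replaced by powers $f^{\ell}, (f')^{\ell'}$ for positive integers $\ell, \ell'$ before Theorem~\ref{first_main_thm} applies. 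This is precisely what the ``change of lattice trick'' accounts for, and since taking positive powers preserves membership in $\mathbb N[p_{i,j}]$, it does not affect the conclusion; but spelling it out would make the reduction watertight.
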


\subsection{Cluster scattering diagrams in higher ranks}\label{subsec: higher rank sd}

The rank-2 scattering structure can be extended to higher dimensions to form higher-rank scattering diagrams. They play crucial roles in mirror symmetry \cite{GS}, Donaldson--Thomas theory \cite{KSwcs}, and cluster algebras \cite{GHKK}. In this section, we focus on the scattering diagrams devised for generalized cluster algebras \cite{Mou, CKM}.

Let $B = (b_{ij})\in \operatorname{Mat}_{n\times n}(\mathbb Z)$ be skew-symmetrizable and $D = \operatorname{diag}(d_i)\in \operatorname{Mat}_{n\times n}(\mathbb Z_{>0})$ be a left skew-symmetrizer of $B$, that is, $(DB)^\intercal + DB = 0$. Let $N = \mathbb Z^n$ and $\{e_1, \dots, e_n\}$ denote the standard basis. Define a $\mathbb Q$-valued skew-symmetric bilinear form $\omega$ on $N$ by
\[
    \omega \colon N \times N \rightarrow \mathbb Q, \quad \omega(e_i, d_je_j) = b_{ij}.
\]
For technical simplicity, we assume $\omega$ to be \emph{non-degenerate}; if not, one can always find a super lattice $\widetilde N$ of higher rank and extend $\omega$ to a non-degenerate one on $\widetilde N$. Let $M = \operatorname{Hom}(N, \mathbb Z)$ be the dual lattice. Our scattering diagram to be defined will have walls living in $M_\mathbb R = M\otimes \mathbb R$.

Let $\ell_i$ be the greatest common divisor of the $i$-th column of $B$. The homomorphism
\[
    w_i \coloneqq \omega(-, \frac{d_i}{\ell_i}e_i) \colon N \rightarrow \mathbb Z
\]
is an element in $M$. Let $P$ be the monoid in $M$ generated by $\{w_i\mid i = 1, \dots n\}$. Since $\omega$ is non-degenerate, $P$ is contained in a strictly convex cone of $M_\mathbb R$. Analogous to the rank-2 case, we consider the monoid algebra $\Bbbk[P]$ and its completion $\widehat{\Bbbk[P]}$ at the maximal ideal $\mathfrak m$ generated by $\{x^p\mid p\in P\cut \{0\}\}$. The following definition generalizes the concept of a \emph{wall} to higher ranks.

\begin{defn}
    A \emph{wall} in $M_\mathbb R$ is a pair $(\mathfrak d, f_\mathfrak d)$ consisting of the \emph{support} $\frakd$ and the \emph{wall-function} $f_\frakd$ such that
    \begin{enumerate}
        \item $\mathfrak d$ is a codimension-one convex rational polyhedral cone contained in the hyperplane $n_0^\perp \subseteq M_\mathbb R$ for some primitive $n_0\in N^+ \coloneqq \{a_1e_1 + \dots + a_ne_n\mid a_i\in \mathbb N\}\cut \{0\}$;
        \item $f_\mathfrak d = f_\mathfrak d(x^w) = 1 + \sum_{k\geq 1} c_kx^{kw}$ for $c_k\in \Bbbk$ where $w$ is the primitive element in $P$ parallel to $\omega(-, n_0)$.
    \end{enumerate}
\end{defn}

In the current setup, \emph{wall-crossing automorphisms} are defined in the same way as in rank 2, which act on the complete algebra $\widehat{\Bbbk[P]}$. The definition of a scattering diagram and related notions discussed in rank 2 such as \emph{regular paths}, \emph{path-ordered products} and \emph{consistent scattering diagrams} generalize verbatim to higher ranks.

\begin{defn}
    A wall $(\mathfrak d, f_\mathfrak d)$ is called \emph{incoming} if $\mathfrak d = \mathfrak d + \mathbb R_{\geq 0}\omega(-, n_0)$ and otherwise \emph{non-incoming} or \emph{outgoing}.
\end{defn}

\begin{thm}[\cite{GS, KSwcs, GHKK}]\label{thm: consistent sd higher rank}
    For any initial scattering diagram $\mathfrak D_\mathrm{in}$ consisting of incoming walls, there is a unique consistent scattering diagram $\mathfrak D$ (up to equivalence) by adding only outgoing walls to $\mathfrak D_\mathrm{in}$.
\end{thm}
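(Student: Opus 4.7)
The plan is to construct $\mathfrak{D}$ by induction on the order $k$ in the $\mathfrak{m}$-adic filtration of $\widehat{\Bbbk[P]}$, building a compatible sequence $\mathfrak{D}_1\subset \mathfrak{D}_2\subset \cdots$ of scattering diagrams such that $\mathfrak{D}_k$ is consistent modulo $\mathfrak{m}^{k+1}$, extends $\mathfrak{D}_\mathrm{in}$ by outgoing walls only, and reduces to $\mathfrak{D}_{k-1}$ modulo $\mathfrak{m}^{k}$. Passing to the limit yields $\mathfrak{D}$, and uniqueness up to equivalence is established at each inductive step so that it persists in the limit.

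For the inductive step, assume $\mathfrak{D}_{k-1}$ is constructed. Consistency modulo $\mathfrak{m}^{k+1}$ can be tested locally at each codimension-$2$ \emph{joint} $\mathfrak{j}$, i.e., the relative interior of an intersection of two or more wall supports. Along a small loop $\gamma$ transverse to $\mathfrak{j}$, the path-ordered product $\mathfrak{p}_{\gamma, \mathfrak{D}_{k-1}}$ is the identity modulo $\mathfrak{m}^{k}$ by hypothesis, and its degree-$k$ part records an obstruction lying in a finite-dimensional nilpotent Lie subalgebra attached to $\mathfrak{j}$. The key observation is that in a neighborhood of $\mathfrak{j}$ the scattering problem becomes effectively two-dimensional: every wall meeting $\mathfrak{j}$ contains $\operatorname{span}(\mathfrak{j})$, so quotienting by $\operatorname{span}(\mathfrak{j})$ converts the local picture into a rank-$2$ scattering diagram on $M_\mathbb{R}/\operatorname{span}(\mathfrak{j})\cong\mathbb{R}^2$. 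Applying \Cref{thm: consistent sd rank 2} in this plane produces a unique collection of outgoing rays whose wall-functions cancel the obstruction; pulling these back under the projection gives codimension-$1$ cones in $M_\mathbb{R}$ to be adjoined to $\mathfrak{D}_{k-1}$.

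The main obstacle is to globalize these local corrections into a single well-defined diagram $\mathfrak{D}_k$. Two different joints may prescribe walls along the same supporting hyperplane, and one must verify that the prescribed wall-functions agree on overlaps and that no new inconsistency appears at higher-codimension strata where added walls meet. The first issue is resolved by the rank-$2$ uniqueness: the wall-function on a given hyperplane is determined by the monomial $x^w$ and by the obstruction read at any single joint on it, so different joints produce compatible data. The second is handled by noting that only finitely many joints contribute nontrivially modulo $\mathfrak{m}^{k+1}$, so the procedure can be carried out simultaneously at all of them; any residual failure would itself be a lower-order obstruction at a refined joint, contradicting the choice at that order. Uniqueness of $\mathfrak{D}_k$ up to equivalence then follows from the rank-$2$ uniqueness applied joint-by-joint, and iterating over $k$ delivers $\mathfrak{D}$, unique up to equivalence.
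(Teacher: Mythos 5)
The paper does not prove this theorem: it is cited to \cite{GS, KSwcs, GHKK}. The closest internal material is the proof sketch of \Cref{thm: positivity higher rank}, which runs a similar order-by-order, joint-by-joint induction but in fact \emph{assumes} the present theorem (it compares $\mathfrak D_{k+1}$ to the truncation $\mathfrak D^{\leq k+1}$ of the already-known consistent diagram). Your sketch aims at the theorem itself, reproducing the standard Kontsevich--Soibelman/Gross--Siebert/GHKK scheme, and the skeleton is correct; however, two steps are more delicate than the write-up suggests.

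First, quotienting by $\operatorname{span}(\mathfrak j)$ does not always produce a rank-2 scattering diagram. This works only at \emph{perpendicular} joints, i.e.\ when the normal directions $n_0$ of the walls through $\mathfrak j$ have two-dimensional image modulo $T_{\mathfrak j}$; at \emph{parallel} joints every wall projects to the same line, the crossing automorphisms commute, and no rank-2 problem appears (consistency there is automatic but for a different reason). Tracking which joints are perpendicular and which facets of the added walls can meet them is precisely what conditions (3) and (4) in the proof sketch of \Cref{thm: positivity higher rank} encode; your inductive hypothesis does not carry this data, so the inductive step as stated is not self-sustaining.

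Second, your globalization worry that "two different joints may prescribe walls along the same supporting hyperplane" is resolved on a wrong premise. The wall added at a joint $\mathfrak j$ is not a full hyperplane but the outgoing cone $\mathfrak j - \mathbb R_{\geq 0}\,\omega(-, n_0)$, emanating from $\mathfrak j$; walls added at distinct joints have disjoint relative interiors, so there is nothing to reconcile on overlaps. (One may later merge walls up to equivalence, but the induction does not require it.) With these two corrections your argument is the intended one, and uniqueness up to equivalence does follow from rank-2 uniqueness applied at each perpendicular joint.
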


\begin{rem}
    Since there are many more possibilities in convex polyhedral cones in dimensions higher than two, we adopt \emph{equivalence relations} between scattering diagrams that allow, for example, subdivision of walls. Formally, two scattering diagrams are \emph{equivalent} if they define identical path-ordered products for an arbitrary path (that is regular to both).
\end{rem}

\begin{defn}[\cite{Mou, CKM}]\label{def: gen cluster sd}
    Let the initial scattering diagram $\mathfrak D_\mathrm{in}$ be the collection of incoming walls $\{(e_i^\perp, P_i(x^{w_i}))\mid i = 1, \dots, n\}$ with $P_i(x) = 1 + \sum_{k=1}^{\ell_i}p_{i,k}x^k$ for $p_{i, k}\in \Bbbk$. The \emph{generalized cluster scattering diagram} $\mathfrak D$ (associated to $B$, $D$, and the choices $p_{i, k}$) is defined to be the unique consistent scattering diagram (up to equivalence) for $\mathfrak D_\mathrm{in}$ guaranteed by \Cref{thm: consistent sd higher rank}.
\end{defn}

When each $P_i$ is the binomial $1 + x^{\ell_i}$, the scattering diagram $\mathfrak D$ is essentially the \emph{cluster scattering diagram} introduced by Gross--Hacking--Keel--Kontsevich \cite{GHKK} as a fundamental tool in their systematic study of canonical (or theta) bases of cluster algebras. They proved that $\mathfrak D$ possesses a positivity that leads to the Laurent positivity of all cluster variables and to the strong positivity of theta bases which contain all cluster monomials.

Built on our tight grading formula (\Cref{first_main_thm}) in rank 2, we obtain a positivity of generalized cluster scattering diagrams in higher ranks (\Cref{thm: positivity higher rank}). This subsequently confirms the conjectural Laurent positivity (\Cref{thm: positivity cluster variables}) of cluster variables for Chekhov--Shapiro's generalized cluster algebras \cite{CS} and manifests the strong positivity of theta bases (\Cref{thm: strong positivity}). More precise statements are to come in the next sections.

\section{The second main theorem}\label{sec: second main theorem}

The generalized cluster scattering diagram $\mathfrak D$ from \Cref{def: gen cluster sd} admits the following positivity on its wall-functions.

\begin{thm}\label{thm: positivity higher rank}
    Let $\mathfrak D$ be a generalized cluster scattering diagram of any rank.  There is a representative of $\mathfrak D$ such that any wall-function $f_\frakd = 1 + \sum_{k\geq 1} c_k x^{kw}$ has positive coefficients in the sense that any $c_k$ is a polynomial of the initial coefficients with positive integer coefficients, that is, it belongs to $\mathbb N[p_{i, j}\mid 1\leq i \leq n, 1 \leq j \leq \ell_i]$.
\end{thm}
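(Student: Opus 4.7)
The plan is to construct a positive representative of $\mathfrak D$ via the Kontsevich--Soibelman order-by-order procedure, at each step reducing higher-rank consistency to rank-$2$ consistency at each codimension-two joint and invoking \Cref{thm: multiple initial lines}. Concretely, I would build a sequence of scattering diagrams $\mathfrak D^{(d)}$, consistent modulo $\mathfrak m^{d+1}$, starting from $\mathfrak D^{(0)} = \mathfrak D_{\mathrm{in}}$ whose initial wall-functions $1 + \sum_{k=1}^{\ell_i}p_{i,k}x^{kw_i}$ are manifestly positive, and show by induction on $d$ that one may choose $\mathfrak D^{(d)}$ so that every wall-function coefficient lies in $\mathbb N[p_{i,k} \mid 1 \leq i \leq n,\, 1 \leq k \leq \ell_i]$.

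For the inductive step, assume $\mathfrak D^{(d-1)}$ is positive in the above sense. The failure of $\mathfrak D^{(d-1)}$ to be consistent modulo $\mathfrak m^{d+1}$ is localized at the codimension-two joints $\mathfrak j$ of the current wall support. Near such a $\mathfrak j$ one picks a small $2$-dimensional affine slice $L_{\mathfrak j}$ transverse to $\mathfrak j$; the restriction of $\mathfrak D^{(d-1)}$ to $L_{\mathfrak j}$ is naturally a rank-$2$ scattering diagram whose initial lines correspond to the walls of $\mathfrak D^{(d-1)}$ containing $\mathfrak j$, with wall-functions inherited by truncation from the higher-rank ones. Applying \Cref{thm: multiple initial lines} to this rank-$2$ restriction produces a finite list of added rays (modulo $\mathfrak m^{d+1}$) whose wall-function coefficients lie in $\mathbb N[c_{i,j}]$, where the $c_{i,j}$ are the coefficients of the rank-$2$ initial lines, i.e., of the walls of $\mathfrak D^{(d-1)}$ meeting $\mathfrak j$. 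One then extends each added ray back to a codimension-one cone in $M_\mathbb R$ emanating outgoing from $\mathfrak j$, defining the new walls to adjoin at order $d$. This higher-rank-to-rank-$2$ reduction is precisely the mechanism developed in the higher-rank setting by Mou \cite[Section 8.5]{Mou} and in \cite{CKM}.

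Positivity then propagates through the induction: by the inductive hypothesis each $c_{i,j}$ already lies in $\mathbb N[p_{i,k}]$, and composition of polynomials with nonnegative integer coefficients stays in $\mathbb N[p_{i,k}]$, so the newly adjoined wall-functions have coefficients of the desired form. Iterating over all joints and letting $d \to \infty$ produces a representative $\mathfrak D$ of the generalized cluster scattering diagram all of whose wall-function coefficients are positive polynomials in the initial $p_{i,k}$.

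The main obstacle, and the essential ingredient provided by this paper, is precisely the rank-$2$ positivity (\Cref{thm: multiple initial lines}), which in turn rests on the tight grading formula of \Cref{first_main_thm}; without this, the wall-functions produced by resolving each joint would be guaranteed only to have rational coefficients, not positive integer ones. A secondary technical difficulty is assembling the rays added at different joints into a globally coherent collection of codimension-one cones, which requires using the equivalence-class flexibility of \Cref{thm: consistent sd higher rank} to subdivide or merge walls as necessary. Finally, one must verify that at each fixed order $d$ only finitely many joints contribute nontrivial corrections, so the construction is well-posed at each step; this finiteness follows in the standard way from the $\mathfrak m$-adic filtration and ensures that passing to the limit yields a bona fide scattering diagram.
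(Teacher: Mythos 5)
Your proposal follows essentially the same strategy as the paper: an order-by-order Kontsevich--Soibelman induction, localizing the failure of consistency at codimension-two joints, reducing each joint to a rank-2 scattering problem, and invoking the rank-2 positivity (\Cref{thm: multiple initial lines}) together with the observation that composition preserves $\mathbb N[p_{i,j}]$. That matches the paper's inductive construction of the finite diagrams $\mathfrak D_k$ and the rank-2 projection at each joint.

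Two technical points that the paper treats carefully are glossed over in your sketch. First, the paper distinguishes \emph{perpendicular} from \emph{parallel} joints (as in \cite[Def.-Lem.~C.2]{GHKK}): the rank-2 reduction and \Cref{thm: multiple initial lines} are applied only at perpendicular joints, where the projection genuinely looks like ``lines plus distinct outgoing rays.'' Consistency at parallel joints is not obtained this way; the paper argues for it indirectly, by comparing $\mathfrak D_{k+1}$ to the truncation $\mathfrak D^{\leq k+1}$ of the known consistent diagram and showing that any residual discrepancy would have to come from an incoming wall, a contradiction. Your blanket statement that the failure of consistency ``is localized at the codimension-two joints'' and can be corrected by the rank-2 reduction at every such joint therefore needs this caveat. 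Second, the paper maintains two structural inductive hypotheses---that every non-initial wall is outgoing with exactly one facet being a perpendicular joint, and that each perpendicular joint lies in the facet of at most one outgoing wall per direction---precisely so that the rank-2 restriction at each joint satisfies the ``distinct rays'' hypothesis of \Cref{thm: multiple initial lines}. Your sketch does not explain why the rank-2 restrictions always take this clean form, which is what makes the theorem applicable at each step. These are not gaps in the overall idea (they are standard in GHKK-style arguments), but they are the places where the induction would need care to be made rigorous.
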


\begin{proof}[Proof sketch]
    We will construct a sequence of finite scattering diagrams $\mathfrak D_k$ with respect to $\Bbbk[P]/\mathfrak m^{k+1}$ so that the limit of $(\mathfrak D_k)_{k\geq 1}$ is equivalent to $\mathfrak D$. 
    The theorem is proven by showing inductively that each $\mathfrak D_k$ admits positivity.

    Let $\mathfrak D_1 = \mathfrak D_\mathrm{in}^{\leq 1}$ where $(\cdot)^{\leq k}$ means to truncate all wall-functions up to order $k$, that is, modulo $\mathfrak m^{k+1}$. Suppose that $\mathfrak D_k$ has been constructed to satisfy that
    \begin{enumerate}
        \item $\mathfrak D_k = \mathfrak D_k^{\leq k}$ and it is consistent modulo $\mathfrak m^{k+1}$;
        \item any wall-function has positive coefficients in the same sense as in the theorem;
        \item any non-initial wall $\frakd$ is outgoing and only one facet can be a perpendicular joint (\cite[Def. 2.11]{AG}, \cite[Def.-Lem. C2]{GHKK});
        \item any perpendicular joint $\mathfrak j$ is contained in the facet of at most one outgoing wall in each possible direction.
    \end{enumerate}

    Consider at each perpendicular joint $\mathfrak j$ of $\frakD_k$ the scattering diagram $\mathfrak D_\mathfrak j$ of walls containing $\mathfrak j$. It is essentially a rank-2 scattering diagram when projected to the quotient of $M_\mathbb R$ by $T_\mathfrak j$ the tangent space of $\mathfrak j$. By (3) and (4), the projection consists of lines and only distinct outgoing rays. It is consistent modulo $\mathfrak m^{k+1}$ by (1) but not necessarily so modulo $\mathfrak m^{k+2}$. A crucial step is using \Cref{thm: multiple initial lines} to complete $\mathfrak D_\mathfrak j$ to achieve the consistency around $\mathfrak j$ modulo $\mathfrak m^{k+2}$. We add new outgoing walls with support $\frakd = \mathfrak j - \mathbb R\omega(-, n_0)$ and wall-function $f_\frakd$ only non-trivial in order $k+1$ for distinct $n_0$ in $T_\mathfrak j^\perp \cap N^+$. Denote this collection of walls by $\frakD(\mathfrak j)$. For an existing outgoing wall $\frakd$ in $\frakD_\mathfrak j$, we replace $f_\frakd$ with $f_\frakd'$ by only adding positive terms in $\mathfrak m^{k+2}\cut\mathfrak m^{k+3}$. Namely, we define
    \[
        \mathfrak D_{k+1} \coloneqq \mathfrak D_\mathrm{in}^{\leq k+1} \cup \bigcup_{\mathfrak j} \left(\mathfrak D(\mathfrak j) \cup \{(\frakd, f_\frakd')\mid \mathfrak j\subset \partial \frakd\} \right)
    \]
    where the second union is over all perpendicular joints in $\mathfrak D_k$.

    Every condition from (1) to (4) is not hard to check for $\mathfrak D_{k+1}$ except the consistency modulo $\mathfrak m^{k+2}$. The strategy is to compare $\mathfrak D_{k+1}$ with $\mathfrak D^{\leq k+1}$ which is consistent and only has finitely many outgoing walls. They are equivalent modulo $\mathfrak m^{k+1}$. Their difference $\frakD'$, easily realized as a scattering diagram only non-trivial in order $k+1$, has possibly only outgoing walls with only parallel joints contained in their facets because $\mathfrak D_{k+1}$ is already consistent modulo $\mathfrak m^{k+2}$ at any perpendicular joint. Should there be any non-trivial $(\frakd, f_\frakd)\in \frakD'$ with normal vector $n_0$, the whole line $\mathbb R\omega(-, n_0)$ lies in the minimal face, forcing $\frakd$ to be incoming, a contradiction. This also shows the equivalence between $\mathfrak D_{k+1}$ and $\mathfrak D^{\leq k+1}$.

    Finally, the limit of $(\mathfrak D_k)_{k\geq 1}$ is clearly equivalent to $\mathfrak D$ and inherits the positivity from that of each $\mathfrak D_k$.
\end{proof}

The application of \Cref{thm: positivity higher rank} in the generalized cluster algebras of Chekhov and Shapiro \cite{CS} will be discussed in the next \Cref{sec: broken lines and theta functions}.

\section{Broken lines, theta functions and positivity}\label{sec: broken lines and theta functions}

Broken lines were introduced by Gross \cite{G10} in his study of mirror symmetry of $\mathbb P^2$. They have been used to construct theta functions on the mirror object in more general situations as developed for instance in \cite{GHK} and \cite{CPS}. These concepts are crucial to our study of rank-2 scattering diagrams and higher-rank generalized cluster algebras. We first review their definitions for a rank-2 scattering diagram $\mathfrak D$ introduced in \Cref{subsec: rank 2 sd}. Suppose that the walls in $\frakD$ have disjoint supports except at the origin.

\begin{defn}
      Let $m_0 \in M$ and a general point $Q \in M_\R \cut \Supp(\frakD)$. A \emph{broken line} $\gamma$ for $m_0$ with endpoint $Q$ is a piecewise linear continuous proper path $\gamma\colon (-\infty,0] \to M_\R \cut \{0\}$ with values $-\infty = \tau_0 < \tau_1 < \cdots < \tau_\ell = 0$ and an associated monomial $c_i x^{m_i} \in \Bbbk[M]$ for each $i = 0, \dots, \ell$ such that
    \begin{enumerate}
        \item $\gamma(0) = Q$ and $c_0 = 1$;
        \item $\dot{\gamma}(\tau) = -m_i$ for any $\tau \in (\tau_{i-1},\tau_i)$ for each $i = 1, \dots, \ell$;
        \item the subpath $\gamma\vert_{(\tau_{i-1}, \tau_{i+1})}$ transversally crosses (the support of) a wall $(\frakd_i, f_i)$ at $\tau_i$ for $i = 1, \dots, \ell-1$;
        \item for $i = 0, \dots, \ell -1$, $c_{i+1}x^{m_{i+1}}$ is a monomial term of
        \[
            \mathfrak p_{-m_i, \frakd_i}(c_i x^{m_i}) = c_ix^{m_i} \cdot f^{n_i\cdot m_i}_{i}
        \]
        where $n_i$ is primitive in $N$ and orthogonal to $\frakd_i$ in the direction $n_i \cdot m_i>0$.
    \end{enumerate}
\end{defn}

We call the final coefficient $c_\ell$ the \emph{weight} of a broken line $\gamma$ and denote $c(\gamma) \coloneqq c_\ell$. We refer to each $\gamma(\tau_i)$ for $i = 1,\dots,\ell - 1$ as a \emph{bending of multiplicity $k$} of $\gamma$ at the wall $\frakd_i$, where $c_{i+1}x^{m_{i+1}}$ is the $(k+1)$-th term of $c_ix^{m_i} f_i^{n_i \cdot m_i}$ (ordered increasingly by exponent). Each element $m_i \in M$ is referred to as the \emph{exponent} of $\gamma$ on the corresponding linear domain, and we let $m(\gamma)$ denote the final exponent $m_\ell \in M$.

\begin{defn}\label{def: theta function rank 2}
   The \emph{theta function} associated to $m_0$ and $Q$ is
   \[
        \vartheta_{Q, m_0} = \sum_\gamma c(\gamma) x^{m(\gamma)}
   \]
   where the sum is over all broken lines for $m_0$ with endpoint $Q$.
\end{defn}

\begin{rem}
    The point $Q$ is always chosen general enough for broken lines to avoid the origin (the only singular locus in rank 2). The sum is generally infinite but indeed lies in $x^{m_0}\widehat{\Bbbk[P]}$ \cite[Proposition 3.4]{GHKK}.
\end{rem}

\begin{exmp}\label{exmp: bl weight}
    The broken line $\gamma$ for $m_0 = (-12, -11)$ in \Cref{fig: broken line q weight} (with $\ell = 4$) has exponents
    \[
        m_1 = (-6, -7),\ m_2 = (-2, -5),\ m_3 = (-2, -1) 
    \]
    and coefficients
    \[
        c_1 = 1,\ c_2 = 9,\ c_3 = 72,\ c(\gamma) = c_4 = 72.
    \]
\end{exmp}

Now we turn to the situation where $\mathfrak D$ is determined by two polynomials $P_1$ and $P_2$ as in (\ref{eq: initial sd rank 2}). In this case the first quadrant of $\mathbb R^2$ is a connected component of $M_\mathbb R\cut \Supp(\frakD)$. It follows from a general theorem of \cite{CPS} that $\vartheta_{m_0} = \vartheta_{Q, m_0}$ for general $Q$ in the first quadrant is (a finite sum in the current case) independent of $Q$.

\begin{thm}[Strong positivity in rank 2]\label{thm: strong positivity rank 2}
    The set $\{\vartheta_{m}\mid m\in M\}$ is a basis of their linear span $\mathcal A(P_1, P_2)$ in $\Bbbk[M]$ which is closed under multiplication, hence making $\mathcal A(P_1, P_2)$ an algebra. The multiplication constants are defined in the same way as \cite[Def.-Lem. 6.2]{GHKK} and are manifestly polynomials in $p_{i,j}$ with positive integer coefficients due to \Cref{first_main_thm}.
\end{thm}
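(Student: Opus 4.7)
The plan is to follow the approach of Gross--Hacking--Keel--Kontsevich \cite[Section 6]{GHKK}, using \Cref{first_main_thm} (and its corollary \Cref{thm: multiple initial lines}) as a black box from which the desired positivity will drop out immediately. To establish linear independence of $\{\vartheta_m\}_{m\in M}$, I observe that the \emph{trivial} broken line, which proceeds from infinity straight to $Q$ with no bending, contributes the monomial $x^{m}$ to $\vartheta_{Q, m}$ with coefficient $1$. Every other broken line for $m$ bends at outgoing walls, so its final exponent has the form $m + \sum_i k_i w_i$ with $k_i \geq 1$ and the $w_i$ primitive elements in $P^+$ pointing toward the origin. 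Consequently $x^m$ is distinguished as a \emph{leading} monomial of $\vartheta_m$, and distinct $m$'s yield distinct leading monomials, proving linear independence and hence the basis property.

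For the algebra structure, I mimic \cite[Def.-Lem.~6.2]{GHKK}. Given $p, q, r \in M$, pick a general point $z$ sufficiently close to $r$ and define
$$\alpha(p, q; r) \;\coloneqq\; \sum_{(\gamma_1, \gamma_2)} c(\gamma_1)\, c(\gamma_2),$$
where the sum ranges over pairs of broken lines for $p$ and $q$ respectively, ending at $z$, with $m(\gamma_1) + m(\gamma_2) = r$. The two things to verify are (i) that $\alpha(p, q; r)$ is independent of the choice of $z$, and (ii) that the expansion
$$\vartheta_p \vartheta_q \;=\; \sum_r \alpha(p, q; r)\, \vartheta_r$$
holds in $\mathcal{A}(P_1, P_2)$ (interpreted in an appropriate $\frakm$-adic completion). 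Both follow from the consistency of $\mathfrak{D}$: moving $z$ across a wall $\frakd$ modifies the pairs of broken lines exactly in a way compensated by the wall-crossing automorphism $\mathfrak{p}_{v, \frakd}$, and a truncation argument modulo $\frakm^{k+1}$ shows that only finitely many $r$'s contribute at each order. This is the rank-$2$ case of a standard argument carried out in \cite{GHKK}.

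The positivity of $\alpha(p, q; r)$ is then immediate: by the bending rule, the weight $c(\gamma)$ of any broken line is a product, one factor per bending, of monomial coefficients extracted from expansions $f_{\frakd_i}^{n_i \cdot m_i}$. Since \Cref{first_main_thm} and \Cref{thm: multiple initial lines} guarantee that every wall-function coefficient of $\frakD$ lies in $\mathbb{N}[p_{i,j}]$, and raising such a series to a positive integer power preserves this property, we conclude $c(\gamma) \in \mathbb{N}[p_{i,j}]$, hence $\alpha(p, q; r) \in \mathbb{N}[p_{i,j}]$. The main obstacle in the plan is step (i) above, the independence of $\alpha(p, q; r)$ on the auxiliary endpoint $z$; this is the technical heart of the GHKK construction, but in our rank-$2$ setting it simplifies considerably because all walls of $\frakD$ meet only at the origin, and consistency of $\frakD$ directly supplies the needed compensation whenever $z$ crosses a wall.
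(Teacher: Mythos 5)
Your proposal matches the paper's (remark-level) treatment: both invoke the GHKK construction of structure constants via pairs of broken lines and then derive positivity from the fact that all wall-function coefficients lie in $\mathbb{N}[p_{i,j}]$ by Theorem~\ref{first_main_thm}. The one point you gesture at only implicitly — that the product $\vartheta_p\vartheta_q$ is actually a \emph{finite} $\Bbbk$-linear combination of thetas, so that $\mathcal{A}(P_1,P_2)$ is closed under multiplication in $\Bbbk[M]$ and not merely in a completion — is handled in the paper by noting that $\{\vartheta_m\}$ coincides with the generalized greedy basis of a generalized cluster algebra, but this is a refinement rather than a different route.
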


\begin{rem}
    For the case of $\frakD_{(\ell_1,\ell_2)}$, the above theorem is the rank-2 special case of \cite{GHKK} whose method extends to our case except for the positivity. The algebra $\mathcal A(P_1, P_2)$ in fact secretly equals a generalized cluster algebra, and the so-called theta basis $\{\vartheta_m\mid m\in M\}$ is identical to the combinatorially defined \emph{greedy basis} containing all cluster monomials; see \Cref{sec: greedy basis} and \Cref{sec: proof first main theorem}.
\end{rem}

Turning to higher ranks, we now give a minimalistic definition of generalized cluster algebras of Chekhov--Shapiro \cite{CS} and then discuss the application of broken lines and theta functions. Recall the notations in \Cref{def: gen cluster sd}. For technical simplicity, we assume that each $P_i$ is monic. Let $((x_i)_{i=1}^n, (P_i)_{i=1}^n, B)$ be an \emph{initial seed} associated to a root $t_0$ of the $n$-regular (infinite) tree $\mathbb T_n$ where the $n$ edges incident to any vertex are distinctly labeled by $\{1, \dots, n\}$. This determines an assignment of a \emph{seed} $((x_{i;t})_i, (P_{i;t})_i, (b_{ij}^t)_{i,j})$ to each vertex $t\in \mathbb T_n$ by imposing the \emph{mutation} rule that for any $t \frac{k}{\quad\quad} t'$,
\[
    x_{k;t'} = x_{k;t}^{-1} \prod_{j=1}^n x_{j;t}^{[-b_{jk}^t]_+/\ell_i} P_{k;t}\left(\prod_{j=1}^n x_{j;t}^{b_{jk}^t} \right)
\]
and $x_{j;t'} = x_{j;t}$ for $j\neq k$; $P_{k;t'}(x) = x^{r_k} P_{k;t}(1/x)$ and $P_{j;t'} = P_{j;t}$ for $j\neq k$; the matrices $(b_{ij}^{t'})$ and $(b_{ij}^t)$ are the Fomin--Zelevinsky mutations of each other in direction $k$. The tuple $(x_{i;t})_i$ is called a \emph{cluster} and each $x_{i; t}$ a \emph{cluster variable}, which by the recursive definition is a rational function in $(x_i)_i$.

The \emph{generalized cluster algebra} $\mathcal A$ is defined to be the algebra generated by the rational functions $x_{i;t}$. Fomin--Zelevinsky's cluster algebra \cite{FZ} is recovered when each $P_i$ is a binomial. Identify $x_i$ with $x^{e_i^*}$ where $\{e_i^*\mid i = 1, \dots, n\}$ denotes the dual standard basis in $M$. Every $x_{i; t}$ now becomes an element in $\operatorname{Frac}(\Bbbk[M])$.

Broken lines and theta functions can be defined in higher ranks. As the cluster case is the most relevant to us, we refer the reader to \cite{GHKK} for full details and to \cite{Mou} and \cite{CKM} for the generalized case. Roughly, for each point $m_0\in M$ and a general point $Q$ in the positive orthant of $M_\mathbb R$, one can consider the set of all broken lines in $\mathfrak D$ with initial exponent $m_0$ ending at $Q$ and define the associated theta function $\vartheta_{m_0}$ to be the formal sum as in \Cref{def: theta function rank 2}, which is independent of $Q$. It is possible that $\vartheta_{m_0}$ is an infinite sum but it always lies in $x^{m_0}\widehat{\Bbbk[P]}$. However, we have

\begin{thm}[\cite{GHKK,Mou,CKM}]\label{prop: cluster variable theta function}
    Every cluster variable $x_{i;t}$ is a theta function which is a finite sum of terminal monomials of broken lines.
\end{thm}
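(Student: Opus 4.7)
The plan is to prove the statement by induction on the combinatorial distance $d(t_0, t)$ in $\mathbb T_n$, using the cluster complex structure inside $\frakD$ and the compatibility of theta functions with path-ordered products.

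For the base case $t = t_0$ and the cluster variable $x_i = x^{e_i^*}$, I would fix a generic basepoint $Q$ in the positive orthant, which is a chamber of the scattering fan bounded by the initial hyperplanes $e_j^\perp$. A broken line $\gamma$ for $m_0 = e_i^*$ arrives at $Q$ from infinity in the direction $+e_i^*$; while unbent, $\gamma$ changes only in its $e_i^*$-coordinate, so its other coordinates match those of $Q$ and remain strictly positive. Hence $\gamma$ stays inside the positive orthant throughout, never crossing any wall. The unique broken line is the straight one, giving $\vartheta_{e_i^*}(Q) = x^{e_i^*} = x_i$ as a finite sum with a single terminal monomial.

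For the inductive step, I would fix an edge in $\mathbb T_n$ from $t$ to $t'$ labeled $k$ with $d(t_0, t) = d$, and assume the statement for all cluster variables at distance at most $d$. The first input is the cluster complex structure: the cluster complex embeds in $M_\R$ as a fan whose maximal cones are the cluster chambers $\calC_s$, with $\calC_{t'}$ obtained from $\calC_t$ by crossing the mutation wall of direction $k$, and with $g_{k;t'}$ an extremal ray of $\calC_{t'}$ (after \cite[Sections 2, 5]{GHKK} and its extension to the generalized setting in \cite{Mou, CKM}). For a generic $Q' \in \calC_{t'}$, the same no-bending argument as in the base case, applied to the straight trajectory $Q' - \R_{\geq 0} g_{k;t'} \subset \calC_{t'}$, yields $\vartheta_{g_{k;t'}}(Q') = x^{g_{k;t'}}$. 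I would then choose a regular path $\gamma$ from $Q'$ to a generic $Q \in \calC_{t_0}$ crossing only the mutation walls along a mutation sequence from $t'$ back to $t_0$, and invoke the compatibility of broken lines with wall-crossings to obtain
\[
    \vartheta_{g_{k;t'}}(Q) = \mathfrak p_{\gamma, \frakD}\bigl( x^{g_{k;t'}} \bigr).
\]
The right-hand side, built from the polynomial mutation wall-functions $P_{j;s}$ encountered along $\gamma$, is precisely the iterated generalized-mutation formula producing the Laurent expansion of $x_{k;t'}$ in the initial cluster, so $\vartheta_{g_{k;t'}} = x_{k;t'}$ as elements of $\operatorname{Frac}(\Bbbk[M])$.

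The main obstacle I anticipate is upgrading this identity to the claim that $x_{k;t'}$ is a \emph{finite} sum of terminal monomials of broken lines, rather than only a formal series in $x^{g_{k;t'}}\widehat{\Bbbk[P]}$. Here \Cref{thm: positivity higher rank} is essential: positivity of $\frakD$ prevents any cancellation when broken lines are collected by their terminal monomials, so the finitely many monomials appearing in the Laurent polynomial $\mathfrak p_{\gamma, \frakD}(x^{g_{k;t'}})$ correspond bijectively to nonempty sets of broken lines with those terminal exponents. A tree-recursion on bendings, starting at $Q$ and working backward toward the asymptote in direction $g_{k;t'}$, then shows that each such terminal monomial is realized by only finitely many broken lines, completing the finite-sum characterization.
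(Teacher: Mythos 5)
The paper does not actually prove this statement: it is imported from the references, as the theorem header \verb|\begin{thm}[\cite{GHKK,Mou,CKM}]| indicates, so there is no ``paper's own proof'' to compare against. Your argument is, in its outline, the standard proof from those references (Gross--Hacking--Keel--Kontsevich for the binomial case, Mou and Cheung--Kelley--Musiker for the generalized case), so the content is right. Two remarks on the details.

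First, a sign slip: with the paper's convention $\dot\gamma(\tau) = -m_i$ and $\gamma(0) = Q'$, the unbent trajectory is $Q' + \mathbb{R}_{\geq 0}\, g_{k;t'}$, not $Q' - \mathbb{R}_{\geq 0}\, g_{k;t'}$. It is this ray that stays in the interior of the cluster chamber $\mathcal{C}_{t'}$ (an open convex cone plus an element of its closure stays in the interior), and this is what makes the no-bending argument go through; with the minus sign the ray would exit the chamber.

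Second, your finiteness argument leans on the positivity theorem (\Cref{thm: positivity higher rank}) to rule out cancellation. That is not wrong, but it is heavier machinery than the cited sources use and risks looking circular to a reader, since it makes the scattering-diagram positivity a prerequisite for identifying cluster variables with theta functions. The argument in \cite[Lemma~7.2 and Proposition~7.1]{GHKK}, which the generalized versions in \cite{Mou, CKM} follow, establishes finiteness more directly: any broken line ending in the positive chamber with asymptotic direction in a cluster cone has its support contained in the union of cluster cones and can only bend at cluster walls; since the relevant wall-functions are the exchange polynomials $P_{j;s}$, which are genuine polynomials (not power series), and only finitely many cluster walls separate $\mathcal{C}_{t'}$ from $\mathcal{C}_{t_0}$, there are only finitely many broken lines, without any appeal to positivity. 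It would be cleaner to cite or reprove that lemma rather than route the finiteness through positivity.
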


\begin{figure}
\begin{tikzpicture}[scale=9]
\def\v{0.15cm}
\draw[draw=none, fill=gray] (0,0) --  ({atan(1.04)}:{-.4}) arc({atan(1.04)}:{atan(0.965)}:{-.4}) -- cycle;
\foreach \x in {2,...,3}
    {
    \draw[line width=1pt] (0,0)--({atan((\x-1)/\x)}:{-.4 });
    }
\foreach \x in {4,...,30}
    {
    \draw[line width=0.2pt,color=gray,draw opacity=0.4] (0,0)--({atan((\x-1)/\x)}:{-.4 });
    }
\foreach \x in {2,...,30}
    {
    \draw[line width=0.1pt,color=gray,draw opacity=0.4] (0,0)--({atan(\x/(\x-1))}:{-.4 });
    }
\draw[line width=1pt] (-.4,0)--(.4,0);
\draw[line width=1pt] (0,-.4)--(0,.4);
\draw[line width=1pt, color=blue] (0.25,0.375) node[circle,fill=black,minimum size=\v,inner sep=0pt]{}--(0,0.25) node[circle,fill=blue,minimum size=\v,inner sep=0pt]{}--(-1/8,-1/16) node[circle,fill=blue,minimum size=\v,inner sep=0pt]{}--(-1/6,-1/9) node[circle,fill=blue,minimum size=\v,inner sep=0pt]{}--(-.3147,-0.2468);

\draw (-.54,-.25) node[anchor=west,color=blue]  {\small$(-12,-11)$};
\draw (-.37,-.07) node[anchor=west,color=blue]  {\small$(-6,-7)$};
\draw (-.25,0.12) node[anchor=west,color=blue]  {\small$(-2,-5)$};
\draw (0,0.37) node[anchor=west,color=blue]  {\small$(-2,-1)$};

\end{tikzpicture}
\caption{A broken line $\gamma$ with endpoint $(2,\,3)$ is depicted on the scattering diagram $\frakD_{(2,2)}$ (shown in \Cref{fig: D2}).   The exponent of each domain of linearity of $\gamma$ is shown as an element of $\Z^2$.  The weight of $\gamma$ is computed in \Cref{exmp: bl weight}. }
\label{fig: broken line q weight}
\end{figure}
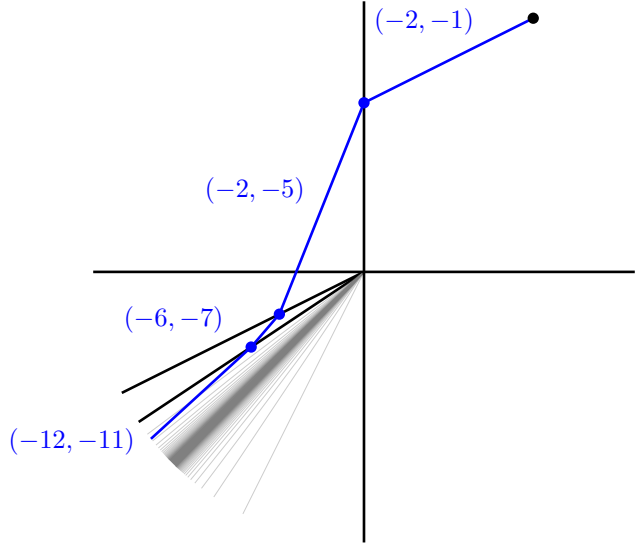

\begin{figure}
\begin{tikzpicture}[scale=.4]
\def\v{0.15cm}
\draw[draw=none, fill=gray, opacity = 0.3] (0,0) --  ({atan(1.57)}:{-10}) arc({atan(1.57)}:{atan(0.424)}:{-10}) -- cycle;
\draw[line width=0.1pt] (-10,0)--(10,0);
\draw[line width=0.1pt] (0,-10)--(0,10);
\foreach \x in {1,...,12}   {
    \draw[line width=0.2pt, color=gray,draw opacity=0.4] (0,0)--({atan(\slope[\x])}:{-10});
    }
\draw[line width=1pt] (0,0)--({atan(0.666666666)}:{-10});

\draw[line width=1pt, color=blue] (7,3) node[circle,fill=black,minimum size=\v,inner sep=0pt]{}--(-3,-2) node[circle,fill=blue,minimum size=\v,inner sep=0pt]{}--(-8.48543,-5.29126);

\draw (-11.5,-4.5) node[anchor=west,color=blue]  {\small$(-8,-5)$};
\draw (1,2.5) node[anchor=west,color=blue]  {\small$(-2,-1)$};

\end{tikzpicture}

\caption{Suppose $P_1 = 1 + x^3$ and $P_2 = 1 + y^2$, as in \Cref{fig: D32}.  A broken line is depicted on $\frakD(P_1,P_2)$ in blue.  This broken line has initial exponent $(-m,-n) = (-8,-5)$, bends with multiplicity 2 at the wall of slope $(a,b) = (3,2)$, and terminates at $(7,3)$.  This is the unique broken line with this fixed initial exponent, endpoint, and final exponent that does not bend at the $x$-axis.  The weight of this broken line is $2$, corresponding to the fact that there are two tight gradings $\omega: E(M_{-1}(6,4)) \to \Z_{\geq 0}$, each with weight $1$.}
\label{fig: D32 broken line}
\end{figure}
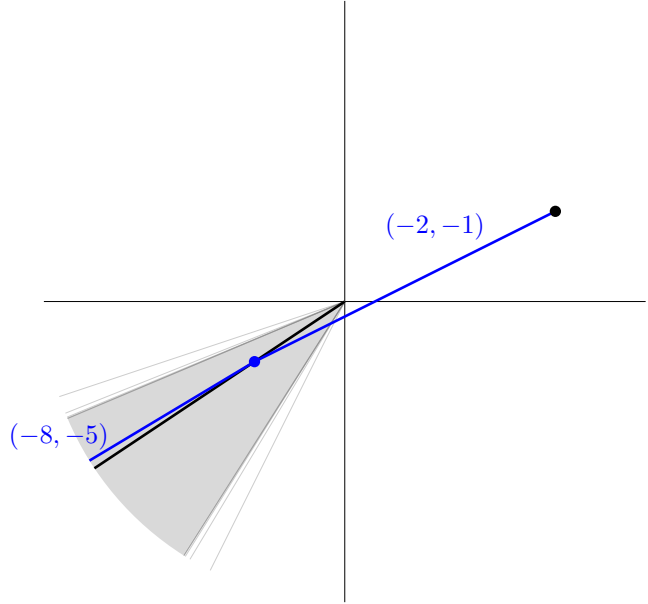

Combining \Cref{prop: cluster variable theta function} with \Cref{thm: positivity higher rank} the positivity of scattering diagrams, we derive immediately

\begin{thm}[Positive Laurent phenomenon]\label{thm: positivity cluster variables}
    Every cluster variable $x_{i;t'}$ is a Laurent polynomial of the cluster variables in any other cluster $(x_{i;t})_i$ with coefficients being polynomials of $(p_{i,j}^t)_{i, j}$ (the coefficients of $P_{i;t}(x)$) with positive integer coefficients.
\end{thm}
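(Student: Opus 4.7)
The plan is to derive this as a direct corollary of the two preceding ingredients: \Cref{prop: cluster variable theta function}, which expresses every cluster variable as a \emph{finite} theta-sum over broken lines, and \Cref{thm: positivity higher rank}, which provides a representative of the generalized cluster scattering diagram $\mathfrak D$ whose wall-functions have coefficients in $\mathbb N[p_{i,j}]$. Once these are in hand, the theorem reduces to verifying that $\mathbb N[p_{i,j}]$-positivity is preserved by the broken-line bending rule.

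First I would reduce to the case $t = t_0$, i.e.\ expanding $x_{i;t'}$ with respect to the initial cluster $(x_j)_j$. Since mutation is involutive and the $n$-regular tree $\mathbb T_n$ is vertex-transitive, any seed may be declared initial, and the generalized cluster algebra $\mathcal A$ together with its scattering diagram and theta functions may be re-erected from the seed at $t$. Thus producing the Laurent expansion with coefficients in $\mathbb N[p_{i,j}^t]$ in the cluster $(x_{j;t})_j$ is equivalent to producing the Laurent expansion in the initial cluster with coefficients in $\mathbb N[p_{i,j}]$ after relabeling.

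With this reduction in place, I would apply \Cref{prop: cluster variable theta function} to find $m_0 \in M$ and a general $Q$ in the positive orthant of $M_\mathbb R$ with
\[
x_{i;t'} \;=\; \vartheta_{m_0} \;=\; \sum_\gamma c(\gamma)\, x^{m(\gamma)},
\]
a finite sum over broken lines $\gamma$ in $\mathfrak D$ with initial exponent $m_0$ and endpoint $Q$. It suffices to show that $c(\gamma) \in \mathbb N[p_{i,j}]$ for each such $\gamma$, and I would prove this by induction on the number of bendings. The base case is $c_0 = 1$. For the inductive step, fix the representative of $\mathfrak D$ given by \Cref{thm: positivity higher rank}, so every wall-function $f_{\frakd_i}$ lies in $1 + x^{w_i}\mathbb N[p_{i,j}][x^{w_i}]$. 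At the $(i+1)$-th bending, $c_{i+1}x^{m_{i+1}}$ is a single monomial term of $c_i x^{m_i} f_{\frakd_i}^{n_i\cdot m_i}$, and the exponent $n_i\cdot m_i$ is strictly positive by the definition of a broken line. Since $\mathbb N[p_{i,j}]$ is closed under addition and multiplication, the positive integer power $f_{\frakd_i}^{n_i\cdot m_i}$ still has coefficients in $\mathbb N[p_{i,j}]$, and the monomial extraction preserves the property, so $c_{i+1} \in \mathbb N[p_{i,j}]$. Summing over the finitely many broken lines yields the desired Laurent polynomial expression.

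The main substance of the argument lies not in this bookkeeping but in the two results it invokes. The hard part is really \Cref{thm: positivity higher rank}, which in turn rests on the rank-$2$ tight grading formula \Cref{first_main_thm} through the perpendicular-joint reduction; finiteness of the broken-line sum in \Cref{prop: cluster variable theta function} is also essential, since without it one would only obtain positivity of a formal Laurent series rather than a genuine Laurent polynomial. Once both are available, the positivity of every broken-line weight — and hence of the Laurent expansion of every cluster variable in every other cluster — is immediate.
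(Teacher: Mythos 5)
Your proposal is correct and takes essentially the same approach as the paper: the paper derives this theorem "immediately" by combining \Cref{prop: cluster variable theta function} with \Cref{thm: positivity higher rank}, and your argument simply fills in the routine details (reduction to the initial seed by tree-transitivity, and the induction on bendings using $n_i\cdot m_i > 0$) that the paper leaves implicit.
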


The above theorem gives an affirmative answer to \cite[Conjecture 5.1]{CS} (which imposes more restrictive assumptions). The positivity theorem of Gross--Hacking--Keel--Kontsevich \cite[Corollary 0.4]{GHKK} follows as a special case when each $P_i$ is a binomial.

The strong positivity in higher ranks is more subtle but again it comes from the positivity \Cref{thm: positivity higher rank} of scattering diagrams. While a more complete form can be presented similar to \cite[Theorem 0.3]{GHKK}, here we describe a snapshot. Let $\Theta\subseteq M$ denote the subset such that $\vartheta_{m}$ for $m\in \Theta$ is a finite sum coming from broken lines.

\begin{thm}[Strong positivity of theta bases]\label{thm: strong positivity}
    The set $\{\vartheta_{m}\mid m\in \Theta\}$ form a basis of their $\Bbbk$-span in $\Bbbk[M]$ which is closed under multiplication. The multiplication constants as defined in \cite[Def.-Lem. 6.2]{GHKK} are polynomials in $p_{i,j}$ with positive integer coefficients due to \Cref{thm: positivity higher rank}.
\end{thm}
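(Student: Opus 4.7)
The plan is to adapt the strategy of \cite[Section 6]{GHKK} from the binomial case to the polynomial case, with the crucial positivity input supplied by \Cref{thm: positivity higher rank}. Fix the positive representative of $\mathfrak D$ guaranteed by \Cref{thm: positivity higher rank}, so that every wall-function coefficient lies in $\mathbb N[p_{i,j}]$. Following \cite[Def.-Lem.~6.2]{GHKK}, I would define the structure constant $\alpha(m_1, m_2; m)$ at a very general basepoint $z$ (pushed far into a chosen asymptotic direction) as the sum, over pairs $(\gamma_1, \gamma_2)$ of broken lines with initial exponents $m_1$ and $m_2$ both ending at $z$ with $m(\gamma_1) + m(\gamma_2) = m$, of the products $c(\gamma_1) c(\gamma_2)$. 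The announced multiplication rule is then $\vartheta_{m_1} \vartheta_{m_2} = \sum_{m} \alpha(m_1, m_2; m) \vartheta_m$.

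Linear independence of $\{\vartheta_m \mid m \in \Theta\}$ is immediate from the fact that the constant broken line with initial exponent $m$ contributes $x^m$ with coefficient $1$, giving a unitriangular expansion of $\vartheta_m$ in terms of monomials $x^{m'}$ with $m' - m \in P$. For closure, assuming $m_1, m_2 \in \Theta$ one observes that $\vartheta_{m_1}\vartheta_{m_2}$ is a finite sum of monomials, which forces $\alpha(m_1, m_2; m)$ to vanish for all but finitely many $m$; a concatenation argument (a broken line contributing to $\vartheta_m$ with a generic bending decomposes into two broken lines contributing to $\vartheta_{m_1}$ and $\vartheta_{m_2}$) shows each such $m$ lies in $\Theta$.

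Positivity is the clean payoff: each weight $c(\gamma)$ of a broken line is by construction a product of coefficients drawn from wall-functions at the bendings of $\gamma$, so by our choice of representative $c(\gamma) \in \mathbb N[p_{i,j}]$. Since $\mathbb N[p_{i,j}]$ is closed under sums and products, each $\alpha(m_1, m_2; m)$ lies in $\mathbb N[p_{i,j}]$, which is the desired strong positivity.

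The main obstacle is establishing that $\alpha(m_1, m_2; m)$ is independent of the basepoint $z$. The GHKK proof of this, based on consistency of $\mathfrak D$ and a local analysis of what happens to pairs of broken lines when $z$ is dragged across a wall, does not use that wall-functions are binomials; it uses only the wall-crossing automorphism formalism, which is identical in our generalized setting. I expect that this argument transfers verbatim, but the verification that every wall-crossing event (including at perpendicular joints, which are more intricate here due to higher-degree wall-functions) rebalances the count is the step that must be carried out with care. Once $z$-independence is established, the theorem follows by combining linear independence, the concatenation argument for closure, and the positivity of wall-function coefficients furnished by \Cref{thm: positivity higher rank}.
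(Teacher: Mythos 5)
Your proposal is correct and follows essentially the same route as the paper: the paper itself states only that the argument parallels \cite[Theorem 0.3]{GHKK} with the positivity of wall-function coefficients (Theorem \ref{thm: positivity higher rank}) substituted for the binomial-case positivity, and your outline — unitriangularity for linear independence, the concatenation/finiteness argument for closure, $z$-independence of structure constants via consistency, and positivity of $c(\gamma)$ from the chosen positive representative — is exactly that GHKK strategy with the same input. You correctly flag $z$-independence as the step requiring careful transfer, which is also the part the paper leaves implicit.
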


Finally, we note that one can always factor $P_i$ into binomials in an algebraic closure $\overline \Bbbk$ of $\Bbbk$. Then a weaker positivity of cluster variables was proven in \cite{Mou} in terms of the roots of $P_i$ in the same fashion as \cite{GHKK}; see the discussion \cite[Section 8.5]{Mou}. However, there are generalized cluster structures where the coefficients $p_{i,k}$ naturally arise as regular functions on algebraic varieties \cite{GSV18}, which motivates the current form of Laurent positivity presented in \Cref{thm: positivity cluster variables}.

\section{Generalized greedy bases}\label{sec: greedy basis}

In this section, we introduce a class of rank-2 generalized cluster algebras with more general coefficient system than \Cref{sec: broken lines and theta functions}. This is needed for the purpose of studying scattering diagrams.

Let $\Bbbk$ be any field of characteristic zero. We consider polynomials in $\Bbbk[p_{1,1},...,p_{1,\ell_1},p_{1,\ell_1}^{-1}][z]$ and $\Bbbk[p_{2,1},...,p_{2,\ell_2},p_{2,\ell_2}^{-1}][z]$. Let
\begin{align*}
    & P_1 = 1 + p_{1,1}z + \cdots + p_{1,\ell_1-1} z^{\ell_1-1} + p_{1,\ell_1}z^{\ell_1} \quad \text{and} \\
    & P_2 = 1 + p_{2,1}z + \cdots + p_{2,\ell_2-1} z^{\ell_2-1} + p_{2,\ell_2}z^{\ell_2}.
\end{align*}
We define further $\overline{P}_1(z):=\frac{1}{p_{1,\ell_1}} z^{\ell_1}P_1(z^{-1})$ and $\overline{P}_2(z):=\frac{1}{p_{2,\ell_2}} z^{\ell_2}P_2(z^{-1})$.

Consider the ring $\Bbbk(p_{1,1},...,p_{1,\ell_1},p_{2,1},...,p_{2,\ell_2})(x_1,x_2)$ of rational functions. We inductively define rational functions $x_k\in \Bbbk(p_{1,1},...,p_{1,\ell_1},p_{2,1},...,p_{2,\ell_2})(x_1,x_2)$ for $k\in\mathbb{Z}$ by the rule:
$$
x_{k+1}x_{k-1}=\left\{\begin{array}{ll}
P_1(x_k) &\text{if }k\equiv 1 \text{ mod }4;\\
P_2(x_k) &\text{if }k\equiv 2 \text{ mod }4;\\
\overline{P}_1(x_k) &\text{if }k\equiv 3 \text{ mod }4;\\
\overline{P}_2(x_k) &\text{if }k\equiv 0 \text{ mod }4.\\
\end{array}\right.
$$
Each $x_k$ is called a \emph{cluster pre-variable}.
\begin{lem}
   Each $x_k$ is  a  Laurent polynomial in 
   $$\underline{\Bbbk}:=\Bbbk[p_{1,1},...,p_{1,\ell_1},p_{1,\ell_1}^{-1},p_{2,1},...,p_{2,\ell_2},p_{2,\ell_2}^{-1}][x_1^{\pm1},x_2^{\pm1}]\,.$$
\end{lem}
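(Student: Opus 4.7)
The plan is to apply the Laurent phenomenon argument of Fomin--Zelevinsky in rank $2$, adapted to the generalized (non-binomial) setting. First, one verifies that each of the four exchange polynomials $P_1, P_2, \overline{P}_1, \overline{P}_2$ has constant term $1$ and lies in the coefficient ring $R := \Bbbk[p_{1,1}, \dots, p_{2,\ell_2}, p_{1,\ell_1}^{-1}, p_{2,\ell_2}^{-1}][z]$: this is immediate for $P_1, P_2$, and a direct computation gives $\overline{P}_i(z) = 1 + \frac{p_{i,\ell_i-1}}{p_{i,\ell_i}}z + \cdots + \frac{1}{p_{i,\ell_i}}z^{\ell_i}$. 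By the up--down symmetry of the recursion (running $x_{k-1} = Q(x_k)/x_{k+1}$ backward uses the same family of polynomials), it suffices to treat $k \geq 1$.

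The base cases $k = 1, 2, 3$ are immediate, with $x_3 = P_1(x_2)/x_1$ being a polynomial in $x_2$ over $R$ divided by $x_1$, hence in $\underline{\Bbbk}$. For the inductive step, I would prove the stronger claim: for every $k \in \mathbb{Z}$ and every $j \in \mathbb{Z}$, the pre-variable $x_k$ is a Laurent polynomial in the adjacent pair $\{x_j, x_{j+1}\}$ with coefficients in $R$. Taking $j = 1$ recovers the lemma. This stronger statement is proved by a caterpillar-type induction, sliding the reference pair from $\{x_{k-1}, x_k\}$ (where the claim is trivial by a single application of the recursion) down to $\{x_1, x_2\}$.

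The main obstacle is the caterpillar sliding step: assuming $f$ is Laurent in $\{x_j, x_{j+1}\}$, the substitution $x_{j+1} = Q(x_j)/x_{j-1}$ for the appropriate $Q \in \{P_1, P_2, \overline{P}_1, \overline{P}_2\}$ must yield a Laurent polynomial in $\{x_{j-1}, x_j\}$, with no spurious factor of $Q(x_j)$ remaining in the denominator. Writing $f = \sum_a g_a(x_j) x_{j+1}^a$ with each $g_a \in R[x_j^{\pm 1}]$ and substituting, one obtains an a priori element of $R[x_{j-1}^{\pm 1}, x_j^{\pm 1}, Q(x_j)^{\pm 1}]$; one must verify that $Q(x_j)^{-1}$ never actually appears. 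Because $Q(0) = 1$, the element $Q(x_j)$ is coprime to $x_j$ in the UFD $R[x_j]$, and a grouping argument on the exponents of $x_{j+1}$ shows that all negative powers of $Q(x_j)$ must cancel. This is the precise point where the constant-term-one property (and, for the $\overline{P}_i$, the invertibility of $p_{i,\ell_i}$) is used decisively; the structure of the argument is identical to the classical rank-$2$ Fomin--Zelevinsky case, with $Q$ now a general polynomial rather than a binomial. Alternatively, the lemma may be deduced as a special case of the Laurent phenomenon for generalized cluster algebras established by Chekhov--Shapiro \cite{CS}.
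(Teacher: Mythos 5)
The paper states this lemma without proof, so there is no argument to compare against; I assess the proposal directly. The plan --- a Fomin--Zelevinsky-style sliding induction, with Chekhov--Shapiro's Laurent phenomenon as a fallback --- is the right shape, but the central sliding step is not an argument as written. The implication ``$f$ Laurent in $\{x_j, x_{j+1}\}$ implies $f$ Laurent in $\{x_{j-1}, x_j\}$'' is false for arbitrary $f$ (already $f = x_{j+1}^{-1} = x_{j-1}/Q_j(x_j)$ is a counterexample), so something specific to the pre-variables $x_m$ must be invoked. Coprimality of $Q_j(x_j)$ with $x_j$ only prevents the monomial part of the denominator from hiding a $Q_j(x_j)$ factor; it does not show that $Q_j(x_j)$ is absent in the first place. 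The appeal to ``a grouping argument on the exponents of $x_{j+1}$'' is a placeholder for the missing content, which is exactly where the work lies.

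The deeper issue is that you locate the decisive input as the constant-term-one condition, but that alone does not drive the cancellation. The essential structural fact is the reversal relation $Q_{k+2} = \overline{Q_k}$ encoded in the mutation rule. To see that $Q_j(x_j)$ cancels when rewriting $x_m$ in $\{x_{j-1}, x_j\}$, one reduces modulo $Q_j(x_j)$: then $x_{j+1}\equiv 0$, so $x_{j+2}= Q_{j+1}(x_{j+1})/x_j \equiv 1/x_j$ (this is where $Q_{j+1}(0)=1$ enters), hence $Q_{j+2}(x_{j+2})\equiv Q_{j+2}(1/x_j)$, and the divisibility $Q_j(x_j)\mid Q_{j+2}(1/x_j)$ holds \emph{precisely because} $Q_{j+2}=\overline{Q_j}$, giving $Q_{j+2}(1/x_j)=Q_j(x_j)/(p\, x_j^{\ell})$ for the appropriate leading coefficient $p$ and degree $\ell$. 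If one instead takes $P_1=P_2=1+z$ but replaces $\overline{P}_1,\overline{P}_2$ by $1+2z$ (still constant term $1$), one computes $x_5 = (x_1x_2 + 2x_1 + 4x_2 + 4)/(x_2(1+x_2))\notin\underline{\Bbbk}$. Your sketch omits the reversal mechanism that the alternating $P_i/\overline{P}_i$ rule enforces, attributing the phenomenon to a weaker hypothesis under which it is actually false. The Chekhov--Shapiro fallback remains legitimate, provided one verifies that the paper's coefficient dynamics are an instance of their framework.
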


 Let $X_k$ be the element such that the coefficient of the (unique) lowest (total) degree term of $X_k$ is equal to 1, and $X_k = p_{1,\ell_1}^{a_k}p_{2,\ell_2}^{b_k}x_k$ for some $a_k,b_k\in\mathbb{Z}$.  These $X_k$ are called the \emph{generalized cluster variables}. Let the \emph{generalized cluster algebra} $\mathcal{A}(P_1,P_2)$ be the $\Bbbk[p_{1,1},...,p_{1,\ell_1},p_{2,1},...,p_{2,\ell_2}]$-subalgebra of $\underline{\Bbbk}$ generated by the set $\{X_k\}_{k\in \mathbb{Z}}$. As the name suggests, it actually equals the $\Bbbk$-algebra $\mathcal A(P_1, P_2)$ in \Cref{thm: strong positivity rank 2} when $p_{i,j}$'s are evaluated in $\Bbbk$ (\Cref{subsec: polynomial case}).

Lee, Li and Zelevinsky defined \emph{greedy bases} \cite{LLZ} for ordinary rank-2 cluster algebras. Rupel \cite{Rupgengreed} constructed greedy bases for generalized rank-2 cluster algebras when $P_1$ and $P_2$ are monic and palindromic. We are ready to give a definition in our more general case. We use here the notation $[a]_+ = \max(a,0)$.

\begin{defn}
For each $(a_1, a_2) \in \mathbb{Z}^2$, we define the associated \emph{greedy element} $x[a_1,a_2]$ by
$$
    x[a_1,a_2] \coloneqq x_1^{-a_1} x_2^{-a_2} \sum_{\omega} \text{wt}(\omega) x_1^{\omega(E_2)}x_2^{\omega(E_1)},
$$
where the sum is over all compatible (not necessarily tight) gradings 
\[
    \omega: E = E([a_1]_+, [a_2]_+)\longrightarrow\mathbb{Z}_{\ge0}.
\]
\end{defn}

\begin{thm}
\label{mtheorem}

The greedy elements $x[a_1,a_2]$ for $(a_1, a_2) \in \mathbb{Z}^2$ form a $\Bbbk[p_{1,1},...,p_{1,\ell_1},p_{2,1},...,p_{2,\ell_2}]$-basis for $\mathcal{A}(P_1,P_2)$, which we refer to as the greedy basis.
\end{thm}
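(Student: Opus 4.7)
The plan is to prove \Cref{mtheorem} in three steps adapting the strategy of Lee--Li--Zelevinsky \cite{LLZ} and Rupel \cite{Rupgengreed}: establishing pointedness (which yields linear independence), membership in $\mathcal{A}(P_1,P_2)$, and spanning.

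For pointedness, the zero grading $\omega \equiv 0$ is vacuously compatible and contributes the monomial $x_1^{-a_1}x_2^{-a_2}$ with coefficient $1$, while every other compatible grading $\omega$ contributes a monomial $x_1^{-a_1 + \omega(E_2)}x_2^{-a_2 + \omega(E_1)}$ whose exponent dominates $(-a_1,-a_2)$ in the coordinatewise partial order, with strict inequality in at least one coordinate. Thus $x[a_1,a_2]$ is pointed at $(-a_1,-a_2)$, and distinct $(a_1,a_2)\in \mathbb Z^2$ produce distinct pointed Laurent monomials, yielding linear independence over $\Bbbk[p_{1,1},\ldots,p_{2,\ell_2}]$.

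The main step is to show $x[a_1,a_2]\in \mathcal{A}(P_1,P_2)$. I plan to identify $x[a_1,a_2]$, after a suitable normalization by powers of $p_{1,\ell_1}$ and $p_{2,\ell_2}$, with the theta function $\vartheta_{(-a_1,-a_2)}$ of the rank-$2$ generalized cluster scattering diagram $\mathfrak D$ associated to $(P_1,P_2)$ as in \Cref{def: gen cluster sd}. By \Cref{thm: strong positivity rank 2} and \Cref{prop: cluster variable theta function}, these theta functions lie in the algebra generated by the generalized cluster variables $X_k$, which is exactly $\mathcal{A}(P_1,P_2)$. The identification is realized by a weight-preserving bijection between compatible gradings on $E([a_1]_+,[a_2]_+)$ and broken lines with initial exponent $(-a_1,-a_2)$ ending at a generic point in the positive quadrant. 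Each bend of such a broken line at a ray $\mathbb R_{\leq 0}(a,b)$ selects a term of a power of the wall-function $f_{(a,b)}$; by \Cref{first_main_thm}, this selection corresponds to a tight grading on a subpath, and the shadow--compatibility condition on the assembled grading is engineered to encode precisely the admissibility of the broken-line trajectory.

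Spanning then follows by a triangularity argument: fix a well-order on $\mathbb Z^2$ refining the partial order given by pointedness; any $y\in \mathcal{A}(P_1,P_2)$ has a lowest pointed Laurent term $c\cdot x_1^{-a_1}x_2^{-a_2}$, and $y - c\cdot x[a_1,a_2]$ has a strictly later pointed exponent. Iterating this reduction (and noting that each $X_k$ is itself a greedy element matching the pointed exponent of a cluster variable) expresses $y$ as a finite $\Bbbk[p_{1,1},\ldots,p_{2,\ell_2}]$-linear combination of greedy elements. The hard part will be the bijection in the membership step: whereas \cite{Rupgengreed} treated the monic palindromic case in which the wall-functions take a tractable form, here one must recursively ``unfold'' the tight gradings contributed at each broken-line bend into a single compatible grading on a Dyck path of size $([a_1]_+,[a_2]_+)$ and match the local-shadow constraints against admissible bend patterns in $\mathfrak D$. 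This combinatorial--scattering-theoretic interplay, built on top of \Cref{first_main_thm}, will be the technical heart of the proof.
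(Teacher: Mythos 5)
Your proposal runs into a circularity that makes the membership step unavailable as written. You want to deduce $x[a_1,a_2]\in\mathcal{A}(P_1,P_2)$ by identifying greedy elements with theta functions, invoking \Cref{first_main_thm} and the broken-line/tight-grading correspondence. But in the paper the logical dependency runs the other way: \Cref{first_main_thm} is established in \Cref{sec: proof first main theorem} (in particular in the polynomial case, \Cref{subsec: polynomial case}) precisely \emph{by} matching greedy elements with theta functions, and that matching explicitly relies on \Cref{mtheorem} --- the paper notes that ``one case relies on the interpretation of the greedy coefficients as sums of weights of compatible gradings, following from \Cref{mtheorem}.'' Using \Cref{first_main_thm} to prove \Cref{mtheorem} therefore presupposes what you are trying to establish. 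A related difficulty is that you treat the span of theta functions (the $\mathcal A(P_1,P_2)$ of \Cref{thm: strong positivity rank 2}) and the subalgebra of $\underline{\Bbbk}$ generated by the generalized cluster variables $X_k$ (the $\mathcal A(P_1,P_2)$ of \Cref{sec: greedy basis}) as interchangeable; the paper only asserts they ``secretly'' coincide as a consequence of the whole development, so this identification cannot be assumed at this stage.

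The paper's actual proof is considerably more elementary and self-contained, sidestepping scattering diagrams entirely: it adapts Rupel's recursion-based argument from \cite{Rupgengreed}. The work consists of verifying that the greedy elements defined by compatible gradings satisfy a greedy recursion (\cite[Definition 2.11]{Rupgengreed}), after accounting for the non-monic, non-palindromic setting by working with cluster pre-variables, exchanging $P_i$ with $\overline{P_i}$ as needed, inserting powers of $p_{i,\ell_i}$, and tracking several variants of the greedy elements related by the symmetries $\sigma_1,\sigma_2$. Once the recursion is verified, Rupel's framework directly yields the basis property. Your pointedness argument for linear independence is correct, and a triangularity argument for spanning is morally right, but both are already packaged inside Rupel's recursion machinery; the missing piece in your proposal is the recursion verification, not a scattering-diagram bijection. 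If you wanted to use a theta-function route, you would need to first independently establish both the scattering-diagram side (including \Cref{first_main_thm}) and the identification of the two algebras named $\mathcal{A}(P_1,P_2)$ without circularity, which would be a substantially longer road than the combinatorial argument the paper employs.
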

\begin{proof}[Proof Sketch]
The work of Rupel \cite{Rupgengreed} can be adapted to our notion of generalized cluster algebras, where we allow for non-monic, non-palindromic polynomials.  The adaptation includes working the cluster pre-variables rather than the cluster variables, sometimes exchanging $P_i$ with $\overline{P_i}$, and adding factors of $p_{i,\ell_i}$ for $i \in \{1,2\}$.  Then we must consider several variants of the greedy elements, depending on whether the edges are weighted by coefficients of $P_i$ or $\overline{P_i}$.  The actions $\sigma_1$ and $\sigma_2$ map between these variants, and from this we can obtain that our elements satisfy the greedy recursion \cite[Definition 2.11]{Rupgengreed}.
\end{proof}

\section{Proof sketch of the first main theorem}\label{sec: proof first main theorem}
We now outline the proof of our first main theorem, \Cref{first_main_thm}, in the scattering diagram context of \Cref{sec: sd}. Namely, we will show that the coefficient $\lambda(ka,kb)$ in (\ref{eq: wall function}) is given by the tight grading formula in \Cref{first_main_thm}. The formula is derived from matching the greedy element $x[m, n]$ from \Cref{sec: greedy basis} with the theta function $\vartheta_{(-m, -n)}$ from \Cref{sec: broken lines and theta functions}. The proof is given first for the ordinary cluster case, i.e., when $P_1 = 1 + x^{\ell_1}$ and $P_2 = 1 + y^{\ell_2}$, where $x[m, n] = \vartheta_{(-m, -n)}$ is the main theorem of \cite{CGM}. Then we explain the necessary ingredients for the generalized case where $P_1$ and $P_2$ are polynomials. Finally, the power series case is handled by taking the limit.

\subsection{The cluster algebra case} We show that, for a suitable choice of initial and final exponent, there is exactly one broken line that does not bend at the $x$-axis.  Moreover, for any $k, a, b > 0$ with $\gcd(a,b) = 1$, we can choose the initial and final exponent such that this broken line bends only at the wall of slope $\frac{b}{a}$ with resulting weight $\lambda(ka,kb)$.  We then use the equality of the greedy and theta bases \cite{CGM} to show that the size of the appropriate set of tight gradings gives the weight of this broken line.

Let $P_1=1+x^{\ell_1}$ and $P_2=1+y^{\ell_2}$. By symmetry, it is enough to handle the case of \emph{positive angular momentum}.  That is, we can fix an endpoint $Z = (Z_1,Z_2) \in \R^2_{>0}$ such that the quantity $(-m+ka)Z_2 -(-n+kb)Z_1$ is positive.  Let $\BL^t_+(m,n,ka,kb)$ denote the set of broken lines with initial exponent $(-m,-n)$ and final exponent $(-m+ka, -n+kb)$ that bend at the $x$-axis with multiplicity $t$ and terminate at $Z$.  Given a set $S$ (of compatible gradings or broken lines), let $|S|$ denote the sum of the weights of the objects in $S$.  Let $\frakd$ denote the wall of slope $b/a$.  

\begin{rem}
Throughout this section, we require that for $(m,n) = M_{-1}(ka,kb)$, we have $m > ka$ and $n > kb$, which we denote by $(m,n) > (ka,kb)$.  This condition is necessary to ensure the existence of the corresponding broken line terminating in the first quadrant.  While \Cref{def: tight grading} allows $m = ka$ or $n = kb$, strict inequality can be obtained by replacing $(m,n)$ with $(m+kb, n + ka)$ without affecting the weighted sum of compatible gradings.
\end{rem}

\begin{lem}\label{lem: unique broken line}
    If $(m,n) = M_{-1}(ka,kb) > (ka,kb)$, then $\BL^0_+(m,n,ka,kb)$ consists of a single broken line.  This broken line bends with multiplicity $k$ at the wall of slope $\frac{b}{a}$ with no other bendings.
\end{lem}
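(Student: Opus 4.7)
The plan is to prove the statement in two parts: existence of the single-bending broken line, and uniqueness within $\BL^0_+(m,n,ka,kb)$. Both parts are built on the conservation of the angular momentum $L := m_y x - m_x y$ (which is constant on each linear segment since $\dot\gamma = -m_i$, and preserved across every wall bending because walls pass through the origin), together with the identity $bm - an = 1$ that is built into the hypothesis $(m,n) = M_{-1}(ka,kb)$.

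For existence, the positive-angular-momentum hypothesis forces $L$ to equal the negative value $L_Z := (-n+kb)Z_1 - (-m+ka)Z_2 < 0$. I would construct $\gamma^*$ by tracing backward from $Z$ along direction $(ka-m, kb-n)$---southwest since $(m,n) > (ka,kb)$---until meeting $\frakd = \R_{\leq 0}(a, b)$ at the unique point $P$ dictated by $L$-conservation, then extending backward from $P$ along direction $-(m,n)$ to infinity in the third quadrant. At $P$, a bending of multiplicity $k$ is admissible because the broken-line normal at $\frakd$ yields $n_0 \cdot m_0 = bm - an = 1$, so the wall contribution is simply $f_\frakd = 1 + \sum_{j\geq 1} c_j x^{j(a,b)}$ and its multiplicity-$k$ term $c_k x^{k(a,b)}$ delivers exactly the shift $(ka,kb)$. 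The slope inequality $n/m < b/a$ (implied by $bm - an = 1$) keeps the initial segment strictly below $\frakd$, while the positive-angular-momentum condition guarantees the second segment exits the third quadrant via the negative $y$-axis and then the positive $x$-axis, each crossed with multiplicity zero, before reaching $Z$.

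For uniqueness, any $\gamma \in \BL^0_+(m,n,ka,kb)$ has total exponent shift $\sum_j k'_j (a'_j, b'_j) + (0, s\ell_2) = (ka,kb)$, over wall bendings at $\R_{\leq 0}(a'_j, b'_j)$ with $(a'_j, b'_j) \in \Z^2$ coprime and positive, and at most one $y$-axis bending of multiplicity $s$. $L$-conservation fixes the supporting line of each segment in terms of its current exponent, so the bending position on $\R_{\leq 0}(a', b')$ with exponent $m_i$ is $-\lambda'(a', b')$ with $\lambda' = L / (m_{i,y} a' - m_{i,x} b')$. The sign requirements that $\lambda' > 0$ and $L < 0$ restrict bendings to walls whose slope strictly exceeds the current velocity slope. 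Applying this constraint to the initial segment, and by a symmetric backward analysis to the final segment, the tightness $bm - an = 1$---together with its persistence as $b(m - k'a) - a(n - k'b) = 1$ after any bending at $\frakd$---identifies $\frakd$ as the unique admissible location for both the first and the last bending; and since a single bending at $\frakd$ with multiplicity $k$ already realizes the full shift $(ka,kb)$, no intermediate wall bending or $y$-axis bending is needed (the latter would contribute $(0, s\ell_2)$ with zero first coordinate, incompatible with any remaining shift along $(a,b)$). Therefore $\gamma = \gamma^*$.

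The main obstacle will be cleanly excluding the algebraically possible but geometrically forbidden multi-bending configurations, most notably decompositions $(ka, kb) = k'_1 (a', b') + k'_2 (a'', b'')$ with $(a', b'), (a'', b'') \neq (a, b)$ coprime. I plan to handle these by direct $L$-tracking: such a configuration forces the supporting line of every segment through $L$-conservation, and a short computation---already visible in small examples where the predicted terminal endpoint falls in the fourth quadrant rather than the first---shows that no such configuration yields $Z \in \R^2_{>0}$. The persistence of the identity $b(m - \Delta_x) - a(n - \Delta_y) = 1$ under shifts along $(a,b)$ keeps the geometry tightly constrained at every stage, making this direct computation tractable and ultimately reducing the argument to a small case analysis.
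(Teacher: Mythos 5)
Your existence construction — tracing backward from $Z$ along $(ka-m,kb-n)$ to the point on $\frakd$ fixed by $L$-conservation, then extending along $-(m,n)$ — is correct and matches the geometry. The genuine gap is in uniqueness. You reduce it to (i) the first and last bendings must lie on $\frakd$, and (ii) no intermediate or $y$-axis bendings are ``needed.'' Neither is established. For (i), $L$-conservation tells you \emph{where} each linear domain would meet $\frakd$, but nothing in your argument prevents the first bend from occurring at a ray of slope in $(n/m, b/a)$ that the initial segment crosses earlier. And (ii) is a non sequitur: ``not needed'' is not ``not possible.'' In particular, your dismissal of a $y$-axis contribution $(0,s\ell_2)$ as ``incompatible with any remaining shift along $(a,b)$'' presupposes that the remaining shift \emph{is} along $(a,b)$ — precisely the claim to be proved. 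Your proposed fallback (direct $L$-tracking per alternative decomposition of $(ka,kb)$ across several rays) is not a finite or obviously tractable case analysis.

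The paper closes exactly this gap with a Diophantine step absent from your proposal. From $bm-an=1$ and $\gcd(m,n)=1$, the relevant rays are parametrized as $(w_1,w_2)=(ka,kb)+\alpha(m,n)$ for $\alpha\in\Z$. If $\alpha>0$ then $w_1>m$, and a bend there is impossible for a broken line of initial exponent $(-m,-n)$: the exponent's first component would turn positive and the trajectory would run west thereafter, never reaching the first quadrant. If $\alpha<0$ then $w_2/w_1>b/a$. So $\gamma$ can bend only at walls of slope $\geq b/a$. Since the total shift $(ka,kb)$ has slope exactly $b/a$ and the $x$-axis (slope $0$) is barred in $\BL^0_+$, every shift must lie along the single ray of slope $b/a$, i.e., on $\frakd$, which the broken line crosses transversally once with multiplicity $k$. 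It is this global slope bound coming from the lattice-point parametrization — not first/last-bend analysis or case-by-case $L$-computation — that excludes all alternative configurations simultaneously. Your observation that $b(m-k'a)-a(n-k'b)=1$ persists under $\frakd$-bendings is correct and in the right spirit, but by itself it constrains only the $\frakd$-bendings and says nothing about rays elsewhere.
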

\begin{proof}
By assumption, we have $(-b,a) \cdot (-m,-n) = 1$,  so $(-kb, ka)\cdot(-m,-n) = k$ for all $k \in \N$.  Since we must have $\gcd(m,n) = 1$, any $w_1,w_2> 0$ satisfying $(-w_2,w_1) \cdot (-n,-m) = k$ must be of the form $(w_1,w_2) = (ka,kb) + \alpha(m,n)$ for some integer $\alpha$.  If $\alpha > 0$, we have $w_1 > m$ so a broken line of exponent $(-m,-n)$ cannot bend at the wall $(w_1,w_2)$.  If $\alpha < 0$, then we have $\frac{w_2}{w_1} > \frac{b}{a}$ so $\gamma$ would cross the wall $\frac{w_2}{w_1}$ only after it crosses the wall of slope $\frac{b}{a}$. 

Thus $\gamma$ can bend only at walls of slope at least $\frac{b}{a}$. Unless it bends only at the wall of slope $\frac{b}{a}$, then it must bend at the $x$-axis to have the correct final exponent.
\end{proof}

\begin{lem}
If $(m,n) = M_{-1}(ka,kb) > (ka,kb)$, then $|\BL^0_+(m,n,ka,kb)| = \lambda(ka,kb)$. 
\end{lem}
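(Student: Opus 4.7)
The plan is to directly compute the weight of the unique broken line whose existence is guaranteed by \Cref{lem: unique broken line}. Let $\gamma$ denote that broken line. Since $\gamma$ has exactly one (nontrivial) bending, occurring at $\frakd$ with multiplicity $k$, and $c_0 = 1$, the weight $c(\gamma)$ is the coefficient of the $(k+1)$-th term of $x^{m_0} f_\frakd^{n_0 \cdot m_0}$, where $m_0 = (-m,-n)$ and $n_0 \in N$ is primitive, orthogonal to $\frakd$, and oriented so that $n_0 \cdot m_0 > 0$. Once we know this coefficient, the lemma follows since $\BL^0_+(m,n,ka,kb)$ is a singleton and $|\BL^0_+(m,n,ka,kb)|$ is defined as the sum of weights of its elements.

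The crux is a one-line computation: I claim $n_0 \cdot m_0 = 1$. The wall $\frakd = \R_{\leq 0}(a,b)$ is tangent to the direction $(a,b)$ with $\gcd(a,b) = 1$, so the primitive normal in $N$ is $\pm(-b, a)$, and the sign is fixed by requiring $n_0 \cdot m_0 > 0$. The hypothesis $(m,n) = M_{-1}(ka,kb)$ means
\[
    ka \cdot n - kb \cdot m = -\gcd(ka,kb) = -k,
\]
and dividing by $k$ yields $bm - an = 1$. Hence $n_0 = (-b, a)$ gives $n_0 \cdot m_0 = bm - an = 1$, as claimed.

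With $n_0 \cdot m_0 = 1$, the bending at $\frakd$ selects a monomial term of
\[
    x^{(-m,-n)} \cdot f_\frakd = x^{(-m,-n)} + \sum_{j\geq 1} \lambda(ja,jb)\, x^{(-m+ja,\,-n+jb)}.
\]
These terms are strictly increasing in exponent, so a bending of multiplicity $k$ picks the $(k+1)$-th term $\lambda(ka,kb)\, x^{(-m+ka,\,-n+kb)}$, which matches the prescribed final exponent. Therefore $c(\gamma) = \lambda(ka,kb)$ and the lemma follows. The only potential subtlety is the sign convention for $n_0$, which is dictated by the broken line definition; beyond this bookkeeping, the argument is a direct unwinding of the definition combined with the arithmetic identity $bm - an = 1$ coming from $M_{-1}$.
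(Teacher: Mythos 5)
Your proof is correct and follows essentially the same route as the paper's: invoke the uniqueness statement of the preceding lemma, observe that the $M_{-1}$ condition forces $n_0 \cdot m_0 = bm - an = 1$, and conclude that the multiplicity-$k$ bend at $\frakd$ contributes exactly the coefficient $\lambda(ka,kb)$. Your write-up is in fact slightly cleaner than the paper's, which states the needed identity with a sign/typo as ``$(m,n)\cdot(-b,a)=1$'' and writes the wall-function with exponents $x^{kb}y^{ka}$ rather than $x^{ka}y^{kb}$; your explicit derivation of $bm-an=1$ and the careful identification of the $(k+1)$-th term of $x^{m_0}f_\frakd^{n_0\cdot m_0}$ avoid both lapses.
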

\begin{proof}
 The wall-function for $\frakd$ can be expressed as 
$$f_\frakd= 1 + \sum_{k \geq 1} \lambda(ka,kb) x^{kb}y^{ka}\,.$$

Let $\gamma$ be the unique broken line in $\BL^0_+(m,n,ka,kb)$, as described in \Cref{lem: unique broken line}.  By hypothesis, we have $(m,n) \cdot (-b,a) = 1$.  Thus, the weight associated to the bending of $\gamma$ over $\frakd$ with multiplicity $k$ is given by the coefficient of $x^{kb}y^{ka}$ in $f_{\frakd}$, which is precisely $\lambda(ka,kb)$.  Since $\gamma$ is the unique broken line in $\BL^0_+(m,n,ka,kb)$ and $\gamma$ only bends once, we can conclude that $|\BL^0_+(m,n,ka,kb)| = |\gamma| = \lambda(ka,kb)$, as desired.
\end{proof}

Let $\CG^t_+(m,n,ka,kb)$ denote the set of compatible gradings on $\calP(m,n)$ such that the sum of the grading on the vertical edges is $ka$, the sum of the grading on the horizontal edges is $kb$, the sum of the grading on the horizontal edges outside $\sh(S_2)$ is $t$. 

\begin{rem} In the cluster algebra case, a tight grading has weight $1$ if it takes values in $\{0,
\ell_1\}$ on the vertical edges and $\{0,\ell_2\}$ on the horizontal edges, and weight $0$ otherwise.  If we restrict to considering only these gradings, then the total weight of gradings in $\CG^t_+(m,n,ka,kb)$ is the same as its cardinality.
\end{rem}

\begin{prop}
    If $(m,n) = M_{-1}(ka,kb) > (ka,kb)$ and $k,t \geq 0$, we have 
    $$|\BL^t_+(m,n,ka,kb)| = |\CG_+^t(m,n,ka,kb)|\,.$$
\end{prop}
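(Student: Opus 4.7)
The plan is to prove this proposition by combining the identity $x[m, n] = \vartheta_{(-m, -n)}$ from \cite{CGM} with a refinement by the statistic $t$. Extracting the coefficient of $x_1^{-m+ka} x_2^{-n+kb}$ from both sides of the greedy-equals-theta identity yields an equality between the total weighted count of compatible gradings $\omega$ with $\omega(E_2) = ka$ and $\omega(E_1) = kb$ and the total weighted count of positive angular momentum broken lines with the prescribed initial exponent $(-m,-n)$ and final exponent $(-m+ka,-n+kb)$ terminating at $Z$. In particular,
$$\sum_{t \geq 0} |\BL^t_+(m,n,ka,kb)| = \sum_{t \geq 0} |\CG^t_+(m,n,ka,kb)|,$$
and the remaining task is to match summands with the same $t$.

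I would establish this refined matching by induction on $t$, using a weight-preserving peeling operation on both sides. The base case $t = 0$ follows from \Cref{lem: unique broken line} combined with the observation that under the hypothesis $(m, n) = M_{-1}(ka, kb) > (ka, kb)$, the set $\CG^0_+$ is exactly the set of tight gradings of \Cref{def: tight grading} with $\epsilon = -1$; both sides equal $\lambda(ka, kb)$. For the inductive step, the strategy is to remove the rightmost bending at the $x$-axis on the broken line side (decreasing the total bending multiplicity by one and shifting the final exponent by $\ell_1 e_1$) and the outermost horizontal edge of value $\ell_1$ outside $\sh(S_2)$ on the grading side (shortening the underlying Dyck path accordingly). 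Combined with the inductive hypothesis and the unrefined equality above, this should reduce the claim for $t$ to the claim for $t-1$ on a suitably reparametrized Dyck path.

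The main obstacle will be making these peeling maps precise and verifying that they are bijective and weight-preserving while preserving the compatibility condition. A first subtlety is that a bending of multiplicity $j > 1$ at the $x$-axis should correspond to $j$ horizontal edges of value $\ell_1$ outside $\sh(S_2)$, so the bijection must carefully track how multiple bendings coalesce on one side into a higher-multiplicity contribution on the other; matching the associated multinomial factors from the expansion of $P_1^N = (1 + x^{\ell_1})^N$ with the corresponding combinatorial choices on the grading side will require care. A second subtlety is that after peeling, the dimensions of the resulting Dyck path may no longer satisfy the $M_{-1}$ hypothesis exactly, which should be addressed by invoking a rotation symmetry of $\calP(m, n)$ analogous to those used in \cite{Rupgengreed, CGM}. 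Once these technicalities are handled, the induction closes, and specializing to $t = 0$ yields the tight grading formula \Cref{first_main_thm} in the ordinary cluster algebra case.
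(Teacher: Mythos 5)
Your proposed base case at $t = 0$ is circular. You claim that at $t=0$ ``both sides equal $\lambda(ka,kb)$,'' but the assertion $|\CG^0_+(m,n,ka,kb)| = \lambda(ka,kb)$ is exactly \Cref{first_main_thm} --- the theorem that this proposition is a step toward proving. \Cref{lem: unique broken line} gives only the broken-line side $|\BL^0_+(m,n,ka,kb)| = \lambda(ka,kb)$; the identification of that quantity with the weighted count of tight gradings is the \emph{conclusion} of the proposition at $t=0$ (the paper records immediately after the proof that ``Since $\CG^0_+(m,n,ka,kb)$ is precisely the set of tight gradings, \Cref{first_main_thm} follows directly''). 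You cannot take it as given.

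The paper resolves this by running the induction in the opposite direction, on $kb - t$, so the base case is $t = kb$, where all horizontal weight lies outside the shadow and both counts collapse to a product of two binomial coefficients that can be evaluated directly with no reference to wall-function coefficients. For the inductive step, it invokes the unrefined equality $|\CG(m,n,ka,kb-t)| = |\BL(m,n,ka,kb-t)|$ from \cite{CGM}, decomposes both sides as sums over $s \leq t$ of refined counts to isolate the $s=t$ term, and relates the result back to the original horizontal weight via an explicit binomial recursion
$$
|\CG^t_+(m,n,ka,kb)| = \binom{[m-t\ell_1]_+}{t}\,|\CG^t_+(m,n,ka,kb-t)|
$$
and its $\BL$ analogue. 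That closed-form factor absorbs exactly the ``coalescing multiplicities / multinomial bookkeeping'' you flag as the chief obstacle in your peeling bijection, so no weight-preserving map between individual broken lines and gradings is needed. Your overall frame --- refine the greedy-equals-theta identity of \cite{CGM} by the statistic $t$ --- is the right one, but the induction must be anchored at the large-$t$ end, where a direct count exists; anchoring it at $t = 0$ presupposes the result.
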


\begin{proof}
We proceed by induction on the quantity $kb-t$.  The base case $kb - t = 0$ is clear, as both quantities are simply a product of two binomial coefficients.  It is readily seen that 
$$|\CG(m,n,ka,kb-t)| = \sum_{s=0}^t |\CG^s_+(m,n,ka,kb-t)|$$ and $$|\BL(m,n,ka,kb-t)| = \sum_{s=0}^t |\BL^s_+(m,n,ka,kb-t)|\,.$$
Since $|\CG(m,n,ka,kb-t)| = |\BL(m,n,ka,kb-t)|$ by \cite{CGM}, we have $|\CG^t_+(m,n,ka,kb-t)| = |\BL^t_+(m,n,ka,kb-t)|$.  We can then use
$$|\CG^t_+(m,n,ka,kb)| = \binom{[m-t\ell_1]_+}{t}|\CG^t_+(m,n,ka,kb-t)|$$
and 
$$|\BL^t_+(m,n,ka,kb)| = \binom{[m-t\ell_1]_+}{t}|\BL^s_+(m,n,ka,kb-t)|$$
to reach the desired equality.
\end{proof}

Since $\CG^0_+(m,n,ka,kb)$ is precisely the set of tight gradings, \Cref{first_main_thm} follows directly.

\subsection{The polynomial case}\label{subsec: polynomial case}

In order to prove \Cref{first_main_thm} when $P_1$ and $P_2$ are polynomials with constant term $1$, we use Rupel's construction of the generalized greedy basis and mimic the approach of \cite{CGM}.

Rupel characterized the smallest lattice quadrilateral $R_{a_1,a_2}$ containing the support of each generalized greedy element $x[a_1,a_2]$ \cite[Proposition 4.22]{Rupgengreed}.  Rupel's proof carries through almost identically, though one case relies on the interpretation of the greedy coefficients as sums of weights of compatible gradings, following from \Cref{mtheorem}.  We then consider a generalization of a result of Cheung, Gross, Muller, Musiker, Rupel, Stella, and Williams \cite{CGM} characterizing the greedy elements in terms of their support.  Let $R^\circ_{a_1,a_2}$ denote the smallest lattice quadrilateral containing the origin $(0,0)$ and $R_{a_1,a_2}$, where the interiors of the two edges incident to $(0,0)$ are excluded when the quadrilateral is not degenerate (see \cite[Figure 1]{CGM}).

\begin{lem}{\cite[Scholium 2.6]{CGM}}\label{lem: greedy support containment}
If $z \in \calA(P_1,P_2)$ is any element containing the monomial $x_1^{-a_1}x_2^{-a_2}$ with coefficient $1$ and whose support is contained in $R^\circ_{a_1,a_2}$, then $z = x[a_1,a_2]$.
\end{lem}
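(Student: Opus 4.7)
The plan is to mirror the proof of \cite[Scholium 2.6]{CGM}, adapting it to the generalized cluster setting via the generalized greedy basis from \Cref{mtheorem} and an extension of Rupel's characterization \cite[Proposition 4.22]{Rupgengreed} of the Newton polygons $R_{a_1,a_2}$ to the non-monic, non-palindromic case.

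First, I would set $w := z - x[a_1,a_2]$ and aim to show $w = 0$. Since $x[a_1,a_2]$ contains $x_1^{-a_1}x_2^{-a_2}$ with coefficient $1$ and has support inside $R^\circ_{a_1,a_2}$, the hypotheses on $z$ ensure that $w \in \calA(P_1,P_2)$ with $\Supp(w) \subseteq R^\circ_{a_1,a_2}$ and with vanishing coefficient of $x_1^{-a_1}x_2^{-a_2}$. Using \Cref{mtheorem}, I would expand $w = \sum_{(b_1,b_2) \in T} c_{b_1,b_2}\, x[b_1,b_2]$ as a finite sum in the greedy basis with $T = \{(b_1,b_2) : c_{b_1,b_2} \neq 0\}$, and assume for contradiction that $T \neq \emptyset$.

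Next, Rupel's description of $R^\circ_{b_1,b_2}$ \cite[Proposition 4.22]{Rupgengreed}, extended to our setting, shows that each such region is a quadrilateral containing both the origin and $(-b_1,-b_2)$, with $(-b_1,-b_2)$ a distinguished vertex whose two incident edges have directions depending only on $\ell_1, \ell_2$. Exploiting this uniform directional structure, I would choose a linear functional $\lambda$ such that $(-b,-b')$ is the unique $\lambda$-maximum of $R^\circ_{b,b'}$ for every $(b,b')$. Picking $(b_1^*,b_2^*) \in T$ maximizing $\lambda(-b_1,-b_2)$, extremality ensures that no other $x[b,b']$ (with $(b,b') \in T$) contains $x_1^{-b_1^*}x_2^{-b_2^*}$ in its support, so the coefficient of $x_1^{-b_1^*}x_2^{-b_2^*}$ in $w$ is exactly $c_{b_1^*,b_2^*} \neq 0$, placing $(-b_1^*,-b_2^*) \in \Supp(w) \subseteq R^\circ_{a_1,a_2}$.

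The final step exploits the rigid common edge-direction structure of the quadrilaterals $R^\circ_{b_1,b_2}$: the pointed vertex $(-b_1^*,-b_2^*)$ lying inside $R^\circ_{a_1,a_2}$, together with the containment $\Supp(x[b_1^*,b_2^*]) \subseteq R^\circ_{b_1^*,b_2^*}$ and a dual extremality argument applied to a point along the parallel boundary edge of $R^\circ_{a_1,a_2}$ at $(-a_1,-a_2)$, forces $(b_1^*,b_2^*) = (a_1,a_2)$ (otherwise, one produces a lattice point of $\Supp(x[b_1^*,b_2^*])$ outside $R^\circ_{a_1,a_2}$ that cannot be cancelled by the other greedy contributions, contradicting the support condition on $w$). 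This yields $(b_1^*,b_2^*) = (a_1,a_2)$, contradicting the vanishing coefficient of $x_1^{-a_1}x_2^{-a_2}$ in $w$; hence $T = \emptyset$ and $z = x[a_1,a_2]$. The main obstacle is this final geometric rigidity step, which rests on extending Rupel's \cite[Proposition 4.22]{Rupgengreed} in detail to the non-monic, non-palindromic setting by tracking $p$-coefficients through his proof analogously to what is done in establishing \Cref{mtheorem}, and then performing a careful case analysis on the quadrilateral shape to rule out the configuration $(b_1^*,b_2^*) \neq (a_1,a_2)$.
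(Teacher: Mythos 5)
The paper does not supply its own proof of this lemma: it states it with the citation \cite[Scholium 2.6]{CGM} and relies on the fact that CGM's argument carries through once Rupel's support characterization \cite[Proposition 4.22]{Rupgengreed} has been extended to the non-monic, non-palindromic generalized setting (the paragraph preceding the lemma is explicit about this). So the relevant comparison is against CGM's argument, and you are indeed trying to reconstruct that.

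Your first two steps are sound. Since every generalized greedy element is pointed (its support lies in $(-b_1,-b_2)+\mathbb Z_{\ge 0}^2$ with coefficient $1$ at the corner), any linear functional $\lambda$ with negative coefficients has $(-b_1,-b_2)$ as the unique $\lambda$-maximum of $R^\circ_{b_1,b_2}$ for \emph{every} $(b_1,b_2)$ simultaneously --- you do not even need the finer edge directions at the pointed vertex for this. Picking a $\lambda$-maximal $(b_1^*,b_2^*)\in T$ and observing that its pointed monomial survives uncancelled in $w$, hence $(-b_1^*,-b_2^*)\in R^\circ_{a_1,a_2}$, is correct.

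The gap is in the final step. Having $(-b_1^*,-b_2^*)\in R^\circ_{a_1,a_2}$ with $(b_1^*,b_2^*)\ne(a_1,a_2)$ is not, by itself, a contradiction: $R^\circ_{a_1,a_2}$ contains many lattice points $(-b,-b')$ with $b\le a_1$, $b'\le a_2$, and all of these are legitimate pointed vertices of greedy elements. Your proposed remedy --- ``one produces a lattice point of $\Supp(x[b_1^*,b_2^*])$ outside $R^\circ_{a_1,a_2}$ that cannot be cancelled by the other greedy contributions'' --- is exactly where the difficulty lies, and it is not established by the single pointed-vertex extremality you have set up. A point of $\Supp(x[b_1^*,b_2^*])$ lying outside $R^\circ_{a_1,a_2}$ can in principle be cancelled by further greedy elements $x[b,b']$ of smaller $\lambda$-value, which in turn contribute yet further terms outside the region, and so on; the contradiction only materializes at the end of such a cancellation chain via the finiteness of $T$, or equivalently by a \emph{second} extremality argument keyed to a non-pointed vertex of the quadrilateral (say the ``top'' vertex of $R_{b,b'}$, picked out lexicographically first by $-m_1$ and then by $m_2$), whose location must then be compared against the boundary edge of $R^\circ_{a_1,a_2}$ emanating from $(-a_1,-a_2)$. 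Your phrase ``a dual extremality argument applied to a point along the parallel boundary edge'' gestures at this, but leaves the choice of functional, the identification of the surviving vertex, and the actual inequality that rules out $(b_1^*,b_2^*)\ne(a_1,a_2)$ unspecified. You also identify ``extending Rupel's Proposition 4.22'' as the main obstacle, but that extension is a prerequisite already handled before the lemma is invoked; the genuine work in this lemma is the cancellation control just described, which your sketch does not yet carry out.
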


Thus, it is enough to show that each generalized theta basis element $\vartheta_{(-a_1, -a_2)}$ has support within $R^\circ_{a_1,a_2}$.  This is shown in the classical setting in \cite[Section 5]{CGM}, and their methods work essentially verbatim in the generalized case.  Their results do not rely on the structure of the scattering diagram, other than that all walls are along the coordinate axes or in the third quadrant (in the $d$-vector version), which holds in the generalized case.  This culminates in bounds on the final exponent of the broken lines contributing to the generalized theta basis, yielding the desired bounds on the support of the theta basis elements.

\subsection{The power series case}
For $P_{1} = 1 + \sum_{i\ge 1} p_{1,i}x^i$ and $P_{2} = 1 + \sum_{i\ge1} p_{2,i}y^i$, let $\mathcal{D}_{P_1,P_2}$ denote the resulting consistent scattering diagram. 
For each $t\in\mathbb{Z}_{>0}$, let $P_{1,t} = 1 + \sum_{i=1}^t p_{1,i}x^i$ and $P_{2,t} = 1 + \sum_{i=1}^t p_{2,i}y^i$. Then let $\mathcal{D}_{P_{1,t},P_{2,t}}$ denote the resulting consistent scattering diagram. Let $\omega$ be a tight grading such that its weight with respect to $(P_{1,t},P_{2,t})$ is zero, but its weight with respect to $(P_{1,t+1},P_{2,t+1})$ is nonzero. Because of the homogeneity constraint, the corresponding monomial is in $\mathfrak{m}^{t+1}$. Hence
$$\aligned
\mathcal{D}_{P_1,P_2} &= \lim_{t\rightarrow \infty} \mathcal{D}_{P_{1,t},P_{2,t}}\, \text{ mod }\mathfrak{m}^{t+1}\\
&=\lim_{t\rightarrow \infty} \mathcal{D}_{P_{1,t},P_{2,t}}. 
\endaligned
$$

\section{Future directions}

\subsection{Combinatorial problems}
We expect that tight gradings have rich combinatorics. One may pose a number of purely combinatorial questions. For example, it would be interesting to classify tightest gradings\footnote{A tight grading is called \emph{tightest} if every unit square inside the $m\times n$ rectangle either is below $\mathcal{P}(m,n)$, or intersects blue or red rectangles that correspond to the grading. For instance, the tight gradings in Example~\ref{tight_exmp}(2)(3)(4) are tightest.}. Another natural question is to find a bijective proof that shows that the coefficients of the wall-function in Figure~\ref{fig: D32} are equal to Catalan numbers.

\subsection{Rank 2 cluster scattering diagrams}
In \cite{ERS}, Elgin--Reading--Stella proposed several conjectures regarding rank 2 cluster scattering diagrams. We plan to explore these conjectures, using tight gradings.

\subsection{The case of higher rank}
We believe that there exists an analogue of greedy bases for (generalized) cluster algebras of higher rank. Once discovered, assuming that greedy bases and theta bases are equal, it would lead to a directly computable and combinatorial formula for broken lines bending only at a single wall, hence for cluster scattering diagrams of higher rank.

\bibliographystyle{amsplain}
\bibliography{bibliography.bib}

\end{document}